\newtheorem{thm}{Theorem}[section]
\newtheorem*{thm*}{Theorem}
\newtheorem{prop}[thm]{Proposition}
\newtheorem*{prop*}{Proposition}
\newtheorem{cor}[thm]{Corollary}
\newtheorem*{cor*}{Corollary}
\newtheorem{lem}[thm]{Lemma}
\newtheorem*{lem*}{Lemma}
\newtheorem*{oquest*}{Open Question}
\newtheorem*{gconj*}{Goldfeld's Conjecture}
\newtheorem*{bsdconj*}{The Birch and Swinnerton--Dyer  (BSD) Conjecture}
\theoremstyle{remark}
\newtheorem{rmk}[thm]{Remark}
\theoremstyle{remark}
\newtheorem*{rmk*}{Remark}
\theoremstyle{definition}
\newtheorem{defn}[thm]{Definition}
\theoremstyle{definition}
\newtheorem{notat}[thm]{Notation}
\theoremstyle{definition}
\theoremstyle{definition}
\theoremstyle{definition}
\newtheorem*{defn*}{Definition}
\theoremstyle{definition}
\newtheorem{ex}[thm]{Example}
\theoremstyle{definition}
\theoremstyle{definition}
\numberwithin{equation}{section}
\newcommand{\Z}{\mathbb{Z}}
\newcommand{\QQ}{\mathbb{Q}}
\DeclareMathOperator{\FFF}{\mathbb{F}}
\newcommand{\Sel}{\textup{Sel}}
\newcommand{\Gal}{\textup{Gal}}
\newcommand{\Vplac}{\mathscr{V}}
\newcommand{\Frob}{\textup{Frob}}
\newcommand{\Hom}{\textup{Hom}}
\newcommand{\isoarrow}{\xrightarrow{\,\,\,\sim\,\,\,}}
\newcommand{\res}{\textup{res}}
\newcommand{\CTP}{\textup{CTP}}
\newcommand{\saaa}{s_{\textbf{a}}}
\newcommand{\sbbb}{s_{\textbf{b}}}
\newcommand{\inv}{\textup{inv}}
\DeclareFontFamily{U}{wncy}{}
\DeclareFontShape{U}{wncy}{m}{n}{<->wncyr10}{}
\DeclareSymbolFont{mcy}{U}{wncy}{m}{n}
\DeclareMathSymbol{\Sha}{\mathord}{mcy}{"58}
\newcommand\restr[2]{{
  \left.\kern-\nulldelimiterspace 
  #1 
  \vphantom{\big|} 
  \right|_{#2} 
  }}
\newcommand{\ovQQ}{\overline{\mathbb{Q}}}
\newcommand{\class}[1]{\left[#1\right]}
\newcommand{\Inj}{\textup{Inj}}
\newcommand{\ct}{\textup{ct}}
\renewcommand{\bibliofont}
\begin{document}
\title[BSD implies Goldfeld's Conjecture]{The Birch and Swinnerton--Dyer conjecture implies Goldfeld's conjecture}

\author{Alexander Smith}
\email{asmith13@math.ucla.edu}
\date{\today}

\maketitle

\begin{abstract}
Given an elliptic curve $E/\QQ$, we show that $50\%$ of the quadratic twists of $E$ have $2^{\infty}$-Selmer corank $0$ and $50\%$ have $2^{\infty}$-Selmer corank $1$. As one consequence, we prove that the Birch and Swinnerton--Dyer conjecture implies Goldfeld's conjecture. Previously, this result was known by work of the author for elliptic curves over $\QQ$ satisfying certain technical conditions.

As part of this work, we determine the distribution of $2$-Selmer ranks in the quadratic twist family of $E$. In the cases where this distribution was not already known, it is distinct from the model for distributions of $2$-Selmer groups constructed by Poonen and Rains.
\end{abstract}

\section{Introduction}

Given an elliptic curve $E/\QQ$ with Weierstrass form $y^2 = x^3 + ax + b$ and any $d \in \QQ^{\times}$, define $E^d/\QQ$ to be the elliptic curve with Weierstrass form
\[y^2 = x^3 + d^2ax + d^3b.\]
This is the quadratic twist of $E$ corresponding to $\QQ(\sqrt{d})$.

In 1979, Goldfeld made an influential conjecture about the distribution of analytic ranks in the quadratic twist family of a rational elliptic curve.
\begin{gconj*}[{\cite[Conjecture B]{Gold79}}]
Given an elliptic curve $E/\QQ$ and a nonnegative integer $r$, we have
\[\lim_{H \to \infty} \frac{\#\left\{d \in \Z^{\ne 0} \,:\,\, |d| \le H \,\text{ and }\, r_{\textup{an}}(E^d) = r\right\}}{2H} = \begin{cases} 1/2 &\text{ if }\, r =0\, \text{ or }\, r = 1\\ 0 &\text{ if }\, r\ge 2.\end{cases}\]
\end{gconj*}
Here, the analytic rank $r_{\textup{an}}(E)$ of an elliptic curve $E/\QQ$ is defined as the order of vanishing of the $L$-function corresponding to $E$ at $s = 1$; the analytic rank is always well defined since elliptic curves over $\QQ$ are known to be modular \cite{Breu01}.

This paper proves an analogue of Goldfeld's conjecture for the $2^{\infty}$-Selmer corank, which we will denote by $r_{2^{\infty}}$ and define more precisely below.
\begin{thm}
\label{thm:main}
Given an elliptic curve $E/\QQ$ and a nonnegative integer $r$, we have
\[\lim_{H \to \infty} \frac{\#\left\{d \in \Z^{\ne 0} \,:\,\, |d| \le H \,\text{ and }\, r_{2^{\infty}}(E^d) = r\right\}}{2H} = \begin{cases} 1/2 &\text{ if }\, r =0\, \text{ or }\, r = 1\\ 0 &\text{ if }\, r\ge 2.\end{cases}\]
\end{thm}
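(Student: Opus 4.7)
The plan is to follow the ergodic-style strategy for studying Selmer groups of quadratic twists that the author developed previously for elliptic curves with a rational $2$-torsion point, and to extend it to the case where the $\Gal(\QQ/\QQ)$-action on $E[2]$ is nontrivial. First I would establish the distribution of $2$-Selmer ranks in the quadratic twist family of $E$, which, as the abstract notes, need not match the Poonen--Rains prediction when $\Gal(\QQ(E[2])/\QQ)$ is nontrivial. This requires analyzing the image of twisted Selmer groups inside an ambient Galois cohomology group cut out by local conditions at the twisting primes, and showing by a Chebotarev-type equidistribution argument that, for a generic choice of twisting primes, the local conditions behave like random subspaces with a prescribed intersection pattern with the fixed global cohomology coming from $E(F)[2]$, where $F = \QQ(E[2])$.

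Second, having the $2$-Selmer distribution in hand, I would iteratively lift the analysis to the $2^{k}$-Selmer groups via Cassels--Tate pairings and their higher refinements. For each additional factor of $2$, an alternating (or more generally anti-symmetric) pairing on the current Selmer group governs whether its rank drops under the comparison map to the next. Following the strategy of the author's earlier work on twists with a rational $2$-torsion point, one shows that, conditional on a fixed $2$-Selmer group, the refined Cassels--Tate pairing is as uniformly distributed as possible modulo its forced symmetries. This step rests on multidimensional large-sieve and bilinear-sum estimates applied to a carefully chosen system of symbols extracted from the pairing, in the spirit of Heath-Brown's work on $2$-Selmer groups of congruent-number curves.

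The main obstacle, and the reason this paper must go beyond the author's previous results, is that when $E(\QQ)[2] = 0$ the local conditions at twisting primes cannot be expressed by Kummer-style symbols over $\QQ$; one must instead work over $F$ while keeping careful track of the $\Gal(F/\QQ)$-action. The refined Cassels--Tate pairings therefore become twisted by characters of this Galois group, and many of the symmetries that were automatic in the $2$-split case must be re-established in this twisted setting. The hard part is to prove an equidistribution theorem for Frobenius symbols that simultaneously controls splitting in $F$ and the relevant Artin-type residue symbols with enough uniformity to drive the large-sieve input; the cases $\Gal(F/\QQ) \in \{\Z/3\Z,\, S_3\}$, where no ambient rational $2$-torsion is available, are expected to require the most delicate analysis.

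Finally, passing from the distributions of $2^{k}$-Selmer ranks to the $2^{\infty}$-Selmer corank is a formal consequence of the iterative scheme: twists with $r_{2^{\infty}}(E^d) \ge 2$ contribute density $0$, because any putative second Selmer class is killed by one of the higher pairings with probability tending to $1$ as $k \to \infty$, while twists of corank $0$ and $1$ each contribute density $1/2$ after the pairings equidistribute, yielding the stated theorem.
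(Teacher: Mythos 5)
Your proposal misidentifies what is new in this paper and, as a result, misses the central idea that actually makes the argument close. You describe the case $E(\QQ)[2] = 0$, with $\Gal(\QQ(E[2])/\QQ)$ of order $3$ or $6$, as the hard case requiring new work beyond the author's earlier results. But that case is exactly Case~I in the paper's classification, and the paper states explicitly that it was already settled by \cite[Theorem 1.2]{Smi22a}. The genuinely new content of this paper concerns the opposite regime: curves $E$ with a \emph{balanced} degree-$2$ isogeny $\varphi\colon E \to E_0$ (so with nontrivial rational $2$-torsion), which the paper calls Cases~IV and~V. It is these curves, not the ones with irreducible $E[2]$, for which the $2$-Selmer distribution departs from the Poonen--Rains model and for which the earlier machinery did not yield the full $2^{\infty}$-corank statement.

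The last paragraph of your proposal is precisely where the argument breaks. You claim that twists with $r_{2^{\infty}}(E^d) \ge 2$ have density $0$ because higher Cassels--Tate pairings kill any second Selmer class with probability tending to $1$. For curves with a balanced isogeny this is exactly what fails: the equidistribution machinery of \cite{Smi22a}, even after the author's adaptation in Section~2 (Theorem~\ref{thm:controllable}), only yields the dichotomy of Theorem~\ref{thm:IVV_pretrick} --- for $100\%$ of $d$, either $r_{2^{\infty}}(E^d) \le 1$, or $r_{2^{\infty}}(E^d) = r_{\varphi,\,\textup{div}}(E^d) \ge 2$ for one of the balanced isogenies $\varphi$. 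The divisible Selmer elements coming from $H^1(G_{\QQ}, E[\varphi])$ are not controlled by the higher pairings, and the equidistribution argument simply cannot rule out this second branch directly. The missing ingredient is what the paper calls the ``isogeny trick'': Proposition~\ref{prop:trick} shows
$r_{2^{\infty}}(E^d) = r_{\varphi,\textup{div}}(E^d) + r_{\varphi',\textup{div}}(E_0^d)$
for the dual isogeny $\varphi'$, so if the bad branch occurred on a positive-density set for $E$, one would have $r_{2^{\infty}}(E_0^d) - r_{\varphi',\textup{div}}(E_0^d) \ge 2$ on a positive-density set of $d$; applying Theorem~\ref{thm:IVV_pretrick} to the isogenous curve $E_0$ (which one checks via \cite{Chil21} can be arranged to be in Case~IV) then gives a contradiction. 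Without this complementary pair of applications of the partial result to $E$ and $E_0$, together with the exact additivity of Proposition~\ref{prop:trick}, your outline has no mechanism to eliminate the second alternative, and the proof does not close.

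A smaller but real issue: your first step, determining the $2$-Selmer distribution, is a real part of the paper (Theorems~\ref{thm:balanced}, \ref{thm:2IV}, \ref{thm:2V}), but the novelty there lies in classifying the cofavored submodules in the presence of a balanced isogeny (Propositions~\ref{prop:IV_cof}, \ref{prop:V_cof}) and in the appearance of the Tamagawa ratio $u(\varphi,d_0)$ and the matrix-model probabilities $P^{\textup{Mat}}$, $P^{\textup{Alt}}$, $P^{\textup{V}}$; the description you give, in terms of local conditions intersecting the image of $E(F)[2]$, is a paraphrase of the Case~I story and does not capture why the isogeny Selmer group contributes a nondegenerate direction to the distribution.
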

Taking $r_{\text{MW}}(E^d)$ to be the Mordell--Weil rank of $E^d/\QQ$, the Birch and Swinnerton--Dyer (BSD) conjecture for $E^d/\QQ$ predicts that
\[r_{\text{MW}}(E^d) = r_{\text{an}}(E^d).\]
Unconditionally, we always have the inequality
\begin{equation}
\label{eq:MW2inf}
r_{\text{MW}}(E^d) \le r_{2^{\infty}}(E^d);
\end{equation}
the Shafarevich--Tate conjecture would imply these ranks are equal. From \cite{Monsky96}, we also have
\begin{equation}
\label{eq:r2ran}
(-1)^{r_{2^{\infty}}(E^d)} = (-1)^{r_{\text{an}}(E^d)} = w(E^d),
\end{equation}
where $w(E^d)$ denotes the global root number of $E^d/\QQ$.

The following result is then an easy consequence of Theorem \ref{thm:main}.
\begin{cor}
\label{cor:BSD2G}
Given any elliptic curve $E/\QQ$, if the BSD conjecture is true in the quadratic twist family of $E$, then Goldfeld's conjecture holds for $E$.
\end{cor}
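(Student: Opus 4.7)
The plan is to combine Theorem~\ref{thm:main} with the BSD conjecture, the inequality \eqref{eq:MW2inf}, and the parity identity \eqref{eq:r2ran} to pin down the analytic rank of $E^d$ on a density-one set of $d$. Since Theorem~\ref{thm:main} already delivers a $50\%/50\%$ split of $2^\infty$-Selmer coranks between $0$ and $1$, almost all of the work has been done; the corollary is a short deduction.

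Fixing $E/\QQ$ and assuming BSD holds for every twist $E^d$, I would work on the set of $d$ with $r_{2^{\infty}}(E^d) \in \{0,1\}$. Theorem~\ref{thm:main} guarantees that this set has natural density one, and moreover splits into two subsets of density $1/2$ according to the value of $r_{2^{\infty}}(E^d)$. On this set, chaining BSD with \eqref{eq:MW2inf} gives
\[r_{\text{an}}(E^d) \;=\; r_{\text{MW}}(E^d) \;\le\; r_{2^{\infty}}(E^d) \;\le\; 1,\]
so $r_{\text{an}}(E^d) \in \{0,1\}$. When $r_{2^{\infty}}(E^d) = 0$ this already forces $r_{\text{an}}(E^d) = 0$; when $r_{2^{\infty}}(E^d) = 1$, the parity identity \eqref{eq:r2ran} forces $r_{\text{an}}(E^d)$ to be odd, hence equal to $1$. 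In either subcase $r_{\text{an}}(E^d) = r_{2^{\infty}}(E^d)$, so the densities $1/2,\, 1/2,\, 0,\, 0,\, \ldots$ that Theorem~\ref{thm:main} supplies for the $2^{\infty}$-Selmer corank transfer verbatim to the analytic rank, which is exactly Goldfeld's conjecture for $E$.

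There is no real obstacle once Theorem~\ref{thm:main} is granted; the only subtlety worth flagging is that BSD must be available for the entire twist family, not just for a positive-density subset. One uses it in the $r_{2^{\infty}} = 0$ subcase to conclude $r_{\text{an}} = 0$, and in the $r_{2^{\infty}} = 1$ subcase to rule out the a priori possibility that $r_{2^{\infty}} = 1$ pairs with an odd $r_{\text{an}} \ge 3$. The density-zero bound for twists with $r_{2^{\infty}} \ge 2$ from Theorem~\ref{thm:main} then absorbs the remaining cases $r \ge 2$ in Goldfeld's statement.
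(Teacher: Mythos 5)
Your argument is correct and is essentially the paper's proof: both chain BSD with \eqref{eq:MW2inf} to get $r_{\text{an}}(E^d) \le r_{2^{\infty}}(E^d)$, then invoke the parity identity \eqref{eq:r2ran} to conclude $r_{\text{an}}(E^d) = r_{2^{\infty}}(E^d)$ whenever $r_{2^{\infty}}(E^d) \le 1$, and finally appeal to Theorem~\ref{thm:main}. You split into the two subcases $r_{2^{\infty}} = 0$ and $r_{2^{\infty}} = 1$ explicitly where the paper handles them in one line, but the content is identical.
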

\begin{proof}
From \eqref{eq:MW2inf} and the assumption of BSD, we have $ r_{\text{an}}(E^d) \le r_{2^{\infty}}(E^d)$. Since $r_{\text{an}}(E^d)$ and $r_{2^{\infty}}(E^d)$ have the same parity, we must have
\[r_{2^{\infty}}(E^d) \le 1 \implies r_{2^{\infty}}(E^d) = r_{\text{an}}(E^d),\]
and the result follows from Theorem \ref{thm:main}.
\end{proof}

Another easy consequence of Theorem \ref{thm:main} and \eqref{eq:r2ran} is the following.
\begin{cor}
\label{cor:unconditional}
Choose any elliptic curve $E/\QQ$. Then, among the quadratic twists $E^d$ with $w(E^d) = +1$, $100\%$ have rank $0$.

Further, among the quadratic twists with $w(E^d) = -1$, $100\%$ have rank at most $1$.
\end{cor}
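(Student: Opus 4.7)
The plan is to derive Corollary \ref{cor:unconditional} as a direct consequence of Theorem \ref{thm:main}, together with the identities \eqref{eq:MW2inf} and \eqref{eq:r2ran}. The key observation is that Theorem \ref{thm:main} forces $r_{2^\infty}(E^d) \le 1$ for $100\%$ of twists, while \eqref{eq:r2ran} says the parity of $r_{2^\infty}(E^d)$ matches the sign of $w(E^d)$; hence within each root-number class, almost every twist attains the minimal admissible Selmer corank, and \eqref{eq:MW2inf} translates this back into a statement about Mordell--Weil ranks.

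More concretely, write $N^{\pm}(H) = \#\{d \in \Z^{\ne 0} : |d| \le H,\ w(E^d) = \pm 1\}$ and $N_r(H) = \#\{d \in \Z^{\ne 0} : |d| \le H,\ r_{2^\infty}(E^d) = r\}$. By \eqref{eq:r2ran}, $N^+(H)$ is the sum of the $N_r(H)$ over even $r$ and $N^-(H)$ is the sum over odd $r$; combining $N^+(H) \ge N_0(H)$, $N^-(H) \ge N_1(H)$, and $N^+(H) + N^-(H) = 2H + O(1)$, Theorem \ref{thm:main} sandwiches each of $N^{\pm}(H)/(2H)$ between $1/2 + o(1)$ and $1/2 + o(1)$, so $N^{\pm}(H)/(2H) \to 1/2$.

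For the first assertion, I would note that whenever $r_{2^\infty}(E^d) = 0$, equation \eqref{eq:r2ran} forces $w(E^d) = +1$ and \eqref{eq:MW2inf} forces $r_{\text{MW}}(E^d) = 0$. Thus the number of rank-zero twists with $w(E^d) = +1$ and $|d| \le H$ is at least $N_0(H)$, and dividing by $N^+(H)$ yields a conditional density at least $N_0(H)/N^+(H) \to 1$. The second assertion is entirely analogous, replacing $N_0$ by $N_1$ and using $r_{\text{MW}}(E^d) \le r_{2^\infty}(E^d) = 1$. There is essentially no obstacle here: the corollary is a mechanical unpacking of Theorem \ref{thm:main}, the only mildly nontrivial point being the verification that $N^{\pm}(H)/(2H) \to 1/2$ so that the stated conditional densities are well defined.
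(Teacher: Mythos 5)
Your proof is correct and is exactly the intended argument; the paper itself states the corollary without proof, calling it an easy consequence of Theorem~\ref{thm:main} and \eqref{eq:r2ran}. Your unpacking — showing $N^{\pm}(H)/(2H) \to 1/2$ via \eqref{eq:r2ran} so the conditional densities are well-defined, then using \eqref{eq:MW2inf} to pass from $r_{2^\infty}$ to $r_{\text{MW}}$ — fills in precisely what the paper leaves implicit.
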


\begin{ex}
Take $E/\QQ$ to be the elliptic modular curve $X_0(15)$. In \cite{CaNe23}, Caraiani and Newton show that, for a given positive integer $d$, every elliptic curve over $\QQ(\sqrt{-d})$ is modular if $E^{-d}$ has rank $0$. If $d$ is a positive squarefree integer equal to $0$, $1$, $2$, $3$, $4$, $5$, $8$, or $12$ mod $15$, $E^{-d}$ has global root number $+1$. So one consequence of Corolary \ref{cor:unconditional} is that $100\%$ of positive squarefree $d$ in these congruence classes satisfy the condition of Caraiani and Newton. 
\end{ex}

\begin{rmk}
As recounted in \cite[Section 1]{Buru20}, it is known from the work of many authors that
\[r_{\text{an}}(E) = r \implies  r_{p^{\infty}}(E) = r_{\text{MW}}(E) = r\]
for any prime $p$, for $r$ equal to $0$ or $1$, and for any elliptic curve $E/\QQ$. Recently, there have been many successes proving \emph{converse theorems}, which take the form
\[r_{p^{\infty}}(E) = r \implies r_{\text{an}}(E)  = r_{\text{MW}}(E) = r\]
subject to some conditions on $p$, $E$, and $r \le 1$. An archetypal such result is due to Skinner and Urban \cite{SkUr14}.

Relatively few converse theorems have been proved for the case $p =2$, and those that have been proved require $E$ to be a CM elliptic curve \cite[Section A]{ABS22}. Nevertheless, there is reason to be optimistic that Goldfeld's conjecture will eventually be known unconditionally.
\end{rmk}

\subsection{Balanced isogenies and the distribution of $2$-Selmer groups}
For most elliptic curves over $\QQ$, Theorem \ref{thm:main} was established by \cite[Theorem 1.2]{Smi22a}. For that theorem, we needed to assume that $E$ did not have a \emph{balanced isogeny}, a term we now define:
\begin{defn}
A degree $2$ $\QQ$-isogeny $\varphi: E \to E_0$ of elliptic curves will be called a \emph{balanced isogeny} if it satisfies
\[\QQ(E[2]) = \QQ(E_0[2]).\]
Here, $\QQ(E[2])$ denotes the minimal extension of $\QQ$ over which every point in $E[2]$ is rational.
\end{defn}
With this defined, it will be convenient to split the elliptic curves $E/\QQ$ into five cases as follows:
\begin{itemize}[itemsep = 6pt, topsep = 4pt]
\item[] \textbf{Case I}: Either $E(\QQ)[2] = 0$, or $E(\QQ)[2] \cong (\Z/2\Z)^2$ and $E$ has no balanced isogeny.
\item[] \textbf{Case II}: $E(\QQ)[2] \cong \Z/2\Z$ and, taking $\varphi: E \to E_0$ to be the unique degree $2$ $\QQ$-isogeny, $\varphi$ is not balanced and $E_0(\QQ)[2]$ is not $(\Z/2\Z)^2$.
\item[] \textbf{Case III}: $E(\QQ)[2] \cong \Z/2\Z$ and, taking $\varphi: E \to E_0$ to be the unique degree $2$ $\QQ$-isogeny, $E_0(\QQ)[2] \cong (\Z/2\Z)^2$.
\item[] \textbf{Case IV}: There is a unique balanced isogeny $\varphi: E \to E_0$ defined on $E$.
\item[]\textbf{Case V}: There are two distinct balanced isogenies defined on $E$.
\end{itemize}

If $E$ is in Case I or Case II, Theorem \ref{thm:main} follows for $E$ from \cite[Theorem 1.2]{Smi22a}. The main goal of this paper is to establish this theorem for Cases IV and V.
\begin{thm}
\label{thm:main_IVV}
Suppose $E$ has a balanced isogeny. Then Theorem \ref{thm:main} holds for $E$.
\end{thm}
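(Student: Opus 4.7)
My plan is to prove Theorem \ref{thm:main_IVV} by adapting the inductive Selmer-group framework of \cite{Smi22a} to the setting of isogeny descent, using the balanced isogeny $\varphi \colon E \to E_0$ (and, in Case V, both balanced isogenies) as the primary descent map in place of multiplication by $2$. For each squarefree $d$, the twist $\varphi^d \colon E^d \to E_0^d$ induces an exact sequence
\[
0 \longrightarrow E_0^d(\QQ)[\hat\varphi] / \varphi^d(E^d(\QQ)[2]) \longrightarrow \Sel_{\varphi^d}(E^d) \longrightarrow \Sel_2(E^d) \longrightarrow \Sel_{\hat\varphi^d}(E_0^d),
\]
so the $2^\infty$-Selmer corank of $E^d$ can be read off from the $\varphi^d$- and $\hat\varphi^d$-Selmer groups together with the higher descents on each side. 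The balanced condition $\QQ(E[2]) = \QQ(E_0[2])$ ensures that $E[\varphi]$ and $E_0[\hat\varphi]$ are cut out by the \emph{same} quadratic \'etale algebra; this forces the two isogeny-Selmer groups to be parameterized by characters ramified in the same set of local fields, so they can be placed in a common combinatorial framework.

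The first main step is to compute the joint distribution of $(\dim_{\FFF_2}\Sel_{\varphi^d}(E^d),\, \dim_{\FFF_2}\Sel_{\hat\varphi^d}(E_0^d))$ as $d$ varies. I would use the grid-of-primes / additive-combinatorial method from \cite{Smi22a}: parameterize squarefree $d$ by their prime factorizations, and show that swapping one prime factor for another effects a bounded-rank perturbation governed by local symbols computed in $\QQ(E[\varphi])$. Because the \'etale algebra is the same on both sides, these local symbols are linked in a controlled way, and one obtains a Markov-chain model whose stationary distribution gives the $\varphi$- and $\hat\varphi$-Selmer distributions. This is the step where the Poonen--Rains prediction fails: the balanced symmetry forces additional correlations (and in Case V, two layers of such correlations), so the $2$-Selmer distribution genuinely differs, and that is what this part of the argument must quantify.

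The second main step is to push the analysis from isogeny-Selmer to $2^\infty$-Selmer. Here I would promote the Cassels--Tate pairing analysis of \cite{Smi22a} to the $\varphi$-descent setting: the $\varphi \hat\varphi$-Selmer group is cut out from $\Sel_{\varphi}(E^d) \times \Sel_{\hat\varphi}(E_0^d)$ by the vanishing of a bilinear pairing, the next layer by a secondary pairing, and so on. Each pairing has to be shown to equidistribute as $d$ varies, using the same prime-swapping technology as in step one but now applied to refined local invariants (R\'edei-type symbols of increasing depth) computed in the tower of fields over $\QQ(E[\varphi])$. The parity constraint \eqref{eq:r2ran} then forces the limiting $2^\infty$-Selmer corank to be supported on $\{0,1\}$ with mass $1/2$ on each.

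The hard part will be step two in Case V. With two independent balanced isogenies, the higher Cassels--Tate pairings live on a larger Selmer complex and can interact: the symbols controlling one pairing may not be independent of those controlling the other, since both are built from the same $2$-torsion field. Verifying that, despite these interactions, the joint pairings equidistribute on the appropriate non-degenerate locus — and that the resulting Markov chain on ranks still collapses to corank $\le 1$ generically — is the delicate calculation that distinguishes this paper from \cite{Smi22a}. The easier Case IV will serve as a warm-up where only one isogeny contributes, and I expect the machinery developed there to bootstrap into Case V by treating the second balanced isogeny as a controlled perturbation.
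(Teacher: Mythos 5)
Your proposal correctly identifies that isogeny descent via the balanced isogeny $\varphi$ is central, and that one must extend the Cassels--Tate equidistribution machinery of \cite{Smi22a}; both of those points match the paper. However, your step two contains a genuine gap, and it is precisely the gap the paper's proof is designed to circumvent. You propose to show that the higher Cassels--Tate pairings controlling the passage from $2$-Selmer to $2^{\infty}$-Selmer \emph{equidistribute}, and to conclude concentration of the corank on $\{0,1\}$ via parity. But in the presence of a balanced isogeny these pairings do \emph{not} fully equidistribute: the additional cofavored submodules of $E[2]^{\oplus a}$ classified in Section \ref{sec:IVV} produce a component of the pairing that is degenerate, and the corresponding tensors are not ``controllable'' in the sense of Definition \ref{defn:half_higher}. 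The generalization of the equidistribution theorem (Theorem \ref{thm:controllable}) only applies to controllable tensors, so a direct argument along your lines can at best show the partial statement of Theorem \ref{thm:IVV_pretrick}: for $100\%$ of $d$, either $r_{2^{\infty}}(E^d)\le 1$ or $r_{2^{\infty}}(E^d)=r_{\varphi,\mathrm{div}}(E^d)\ge 2$. Nothing in your plan rules out the second possibility occurring with positive density. Note also that the parity constraint \eqref{eq:r2ran} only fixes the parity of the corank; it does not by itself force concentration on $\{0,1\}$ once you have a distribution on the $2$-Selmer side.

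The missing idea is the isogeny trick of Section \ref{ssec:isogeny_tricks}. The identity of Proposition \ref{prop:trick},
\[
r_{2^{\infty}}(E^d)=r_{2^{\infty}}(E_0^d)=r_{\varphi,\mathrm{div}}(E^d)+r_{\varphi',\mathrm{div}}(E_0^d),
\]
shows that if the ``bad'' alternative $r_{2^{\infty}}(E^d)=r_{\varphi,\mathrm{div}}(E^d)\ge 2$ occurred for a positive proportion of $d$, then for the same $d$ one would have $r_{\varphi',\mathrm{div}}(E_0^d)=0$ and hence $r_{2^{\infty}}(E_0^d)-r_{\varphi',\mathrm{div}}(E_0^d)\ge 2$. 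Since $E_0$ is a curve in Case IV (by assumption or by \cite{Chil21}), Theorem \ref{thm:IVV_pretrick} applied to $E_0$ with the dual balanced isogeny $\varphi'$ forbids exactly that, yielding a contradiction. So rather than defeating the degeneracy head-on, the paper only proves what equidistribution \emph{can} give for each curve separately, and then trades the uncontrollable obstruction on $E$ for a controllable one on the isogenous curve $E_0$. You would need to add this bootstrapping step, or else find a genuinely new way to handle the uncontrollable tensors directly.
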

This result implies that Theorem \ref{thm:main} holds for curves in Case III, the one remaining case. After all, if $E$ is in Case III, there is a $\QQ$ isogeny $\varphi: E \to E_0$ to a curve $E_0$ in either Case I, IV, or V. Since we have
\begin{equation}
\label{eq:iso_trick_one}
r_{2^{\infty}}(E^d) = r_{2^{\infty}}(E_0^d)
\end{equation}
for all nonzero integers $d$, Theorem \ref{thm:main} will follow for $E$ since it holds for $E_0$ by either Theorem \ref{thm:main_IVV} or \cite[Theorem 1.2]{Smi22a}.

Balanced isogenies are relevant to us because their presence affects the distribution of $2$-Selmer groups in the quadratic twist family of $E$. To explain this, we first set some notation for Selmer groups and the probabilities we need.

\begin{notat}
\label{notat:early_Tama}
Take $E/\QQ$ to be an elliptic curve and $d$ to be a nonzero integer. Given a positive integer $k$, take $r_{2^k}(E^d)$ to be the $2^k$-Selmer rank of $E^d$; our convention is to define this as the maximal $r$ so that there is some injection
\[(\Z/2^k\Z)^r \hookrightarrow \Sel^{2^{\infty}} E^d \subset H^1\left(G_{\QQ},\, E[2^{\infty}]\right),\]
where $G_{\QQ}$ denotes the absolute Galois group of $\QQ$. The $2^{\infty}$-Selmer corank is then defined by $r_{2^{\infty}}(E^d) = \lim_{k \to \infty} r_{2^k}(E^d)$.

Given a $\QQ$-isogeny $\varphi: E \to E_0$ of degree $2$, there is an associated isogeny from $E^d$ to $E_0^d$ that we also denote by $\varphi$, and we denote the kernel of this map by $E^d[\varphi]  \cong E[\varphi]$. We then take
\[r_{\varphi}(E^d) = \dim_{\FFF_2} \text{im}\left(H^1(G_{\QQ}, E[\varphi]) \to H^1(G_{\QQ}, E^d[2^{\infty}])\right) \cap \Sel^{2^{\infty}} E^d.\]

Given $H > 0$ and a nonzero integer $d_0$, take $X_E(d_0, H)$ to be the set of squarefree integers $d$ of magnitude at most $H$ such that $dd_0$ lies in $\left(\QQ_v^{\times}\right)^2$ for $v$ equal to $2$, $\infty$, and all places of bad reduction for $E$. If $H$ is sufficiently large, we may choose $d$ in $X_E(d_0, H)$ such that
\[E^d(\QQ)[2] = E^d(\QQ)[4] \quad\text{and}\quad E_0^d(\QQ)[2] = E_0^d(\QQ)[4].\]
Taking $\varphi': E_0 \to E$ to be the dual isogeny to $\varphi$, we then define
\[u(\varphi, d_0) = r_{\varphi}(E^d) - r_{\varphi'}(E_0^d).\]
As we will see in Definition \ref{defn:Tama}, this does not depend on the choice of $d$.
\end{notat}

\begin{notat}
Given integers $n \ge j \ge 0$ and $m \ge 0$, we take $P^{\text{Mat}}(j \,|\, m \times n)$ to be the probability that a uniformly selected random $m \times n$ matrix with coefficients in $\FFF_2$ has kernel of dimension $j$. Given any integer $u$, we then take
\[P^{\text{Mat}}(j\,|\, (\infty - u) \times \infty)= \lim_{n \to \infty} P^{\text{Mat}}(j \,|\, (n- u) \times n).\]
We also take $P^{\text{Alt}}(j\,|\,n)$ to be the probability that a uniformly selected random alternating $n \times n$ matrix with coefficients in $\FFF_2$ has kernel of rank $j$.

An explicit formula for these probabilities appears in \cite[Case 2.12]{Smi22b}.
\end{notat}

\begin{thm}
\label{thm:balanced}
Choose a balanced isogeny $\varphi: E \to E_0$ and a nonzero integer $d_0$, and take $u = u(\varphi, d_0)$. Then, for any $r \ge 0$,
\[\lim_{H \to \infty} \frac{\# \{d \in X_E(d_0, H)\,:\,\, r_{\varphi}(E^d) = r\}}{ \#  X_E(d_0, H)} = P^{\textup{Mat}}(r\,|\, (\infty - u) \times \infty).\]
\end{thm}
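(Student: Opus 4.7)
The plan is to realize $\Sel^{\varphi}(E^d)$ as the kernel of an $\FFF_2$-matrix $M(d, d_0)$ whose entries become pseudo-random as $d$ varies, then compute the resulting distribution of kernel dimensions. Concretely, after identifying $H^1(G_{\QQ}, E[\varphi]) \cong \QQ^{\times}/(\QQ^{\times})^2$ (using that $E[\varphi] \cong \Z/2\Z$ with trivial Galois action), the $\varphi$-Selmer group consists of squarefree classes supported on primes dividing $d d_0$ together with a fixed finite set $S$ of bad primes (including $2$ and $\infty$), subject to local solvability conditions. The conditions at primes outside $S \cup \textup{Supp}(d d_0)$ are automatic; the conditions at primes $q \mid d$ become Legendre-type equations in the $\FFF_2$-coefficients of the class, giving the rows of $M(d, d_0)$, while the coefficients themselves (indexed by $q \mid d d_0$ and by $S$) give the columns.

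Next, I would do an Euler-characteristic bookkeeping (essentially a Greenberg--Wiles-style duality computation) to show that, as $\omega(d) \to \infty$, the matrix $M(d, d_0)$ has exactly $u = u(\varphi, d_0)$ more columns than rows. The well-definedness of this quantity across different $d \in X_E(d_0, H)$ — asserted in Notation \ref{notat:early_Tama} — is built into this computation: only local discrepancies at $S$ contribute to the column-minus-row count, and these discrepancies depend only on $d_0$. The same step handles the passage from $\dim \Sel^{\varphi}(E^d)$ to $r_{\varphi}(E^d)$, since the hypotheses $E^d(\QQ)[2] = E^d(\QQ)[4]$ and $E_0^d(\QQ)[2] = E_0^d(\QQ)[4]$ control the kernel of $H^1(G_{\QQ}, E[\varphi]) \to H^1(G_{\QQ}, E^d[2^{\infty}])$ by a constant depending only on $d_0$, which is absorbed into $u$.

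The main obstacle is showing joint equidistribution of the entries of $M(d, d_0)$. Each off-diagonal entry at $(p, q)$ is a symbol determined by the Frobenius of $q$ in a finite abelian extension of $\QQ$ depending only on $p$ (and on $E, \varphi, d_0$); marginal equidistribution of any fixed entry follows from Chebotarev, but obtaining joint equidistribution — so that the kernel of $M(d, d_0)$ genuinely mimics that of a uniformly random matrix — requires the bilinear-sum and higher-moment machinery developed in \cite{Smi22a, Smi22b}. The balanced isogeny hypothesis $\QQ(E[2]) = \QQ(E_0[2])$ is essential here: it guarantees that the auxiliary extensions associated to $\varphi$ and to the dual isogeny $\varphi'$ are linked in a controlled way, so that the only deterministic structure in $M(d, d_0)$ is the dimension shift $u$. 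Without the balanced hypothesis, the two Selmer structures would instead be coupled by an alternating pairing, producing a Poonen--Rains-type distribution on $\Sel^2$ rather than the pure rectangular-matrix distribution that appears here.

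Granting joint equidistribution, the theorem reduces to the elementary fact that the kernel dimension of a uniformly random $m \times n$ matrix over $\FFF_2$ has distribution $P^{\textup{Mat}}(r \mid m \times n)$: taking $n - m = u$ and letting $n \to \infty$ alongside $\omega(d) \to \infty$ for $d \in X_E(d_0, H)$ yields $P^{\textup{Mat}}(r \mid (\infty - u) \times \infty)$, which is the claimed limiting density.
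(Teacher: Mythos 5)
Your high-level matrix model is a reasonable heuristic that underlies much of the literature on $\varphi$-Selmer distributions (Heath-Brown, Kane, Klagsbrun--Mazur--Rubin), but the proposal has three gaps that matter.

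First, ``granting joint equidistribution'' is doing far too much work. The governing-expansion machinery of \cite{Smi22a, Smi22b} does not yield joint equidistribution of all entries of a growing matrix over the twist family; what it yields is estimates on specific moments, namely the averages of $\#\mathcal{S}_{a,b}(E^d)$ (counts of tuples of Selmer elements satisfying independence conditions). The paper computes these moments in Theorem~\ref{thm:moments}, and then the passage from moments to the limiting distribution of ranks is not automatic: it is handled in Section~\ref{sec:ranks} by verifying the hypotheses of the Sawin--Matchett-Wood moment-uniqueness theorem for a suitable ``diamond category'' (Propositions~\ref{prop:IV_consis} and~\ref{prop:IV_unique}). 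Your reduction to ``the elementary fact about random $\FFF_2$-matrices'' skips over exactly this step, which is nontrivial because the family of moments $2^{au+ab+\frac{1}{2}b(b+1)}$ must be shown to determine a unique distribution.

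Second, your account of why balancedness matters is not correct. It is not that ``the auxiliary extensions associated to $\varphi$ and $\varphi'$ are linked in a controlled way,'' nor does the unbalanced case produce a Poonen--Rains distribution on $r_{\varphi}$ (the Remark after Theorem~\ref{thm:balanced} makes explicit that it instead produces degenerate behavior: $r_{\varphi}(E^d)$ either vanishes or blows up). The actual mechanism is local: the exact sequence \eqref{eq:balanced} shows that for a good prime $p$ with $p \mid d$, the local condition $W_p(\varphi, d)$ has order exactly~$2$ precisely because $\QQ(E[2]) = \QQ(E_0[2])$. This is what makes the Tamagawa ratio $\mathcal{T}(\varphi,d)$ depend only on $d_0$, giving a constant column-minus-row discrepancy $u$; without balancedness, $\mathcal{T}(\varphi,d)$ would vary with the number of prime factors of $d$, and your matrix $M(d,d_0)$ would not have a stable shape.

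Third, your argument, as written, only addresses $E$ in Case~IV. For $E$ in Case~V the paper needs a separate reduction: the dual isogeny $\varphi'\colon E_0 \to E$ lands on a Case~IV curve, so Theorem~\ref{thm:balanced} holds for $\varphi'$, and one then transfers the result to $\varphi$ using the symmetry $P^{\textup{Mat}}(a\,|\,(\infty-u)\times\infty) = P^{\textup{Mat}}(a-u\,|\,(\infty+u)\times\infty)$. Your proposal does not mention this reduction, and the matrix-model framework for Case~V curves (where there are two balanced isogenies) is not obviously identical to the Case~IV one, so this step should not be omitted.
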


\begin{rmk}
Choose an elliptic curve $E/\QQ$ in Case II, take $\varphi: E \to E_0$ to be the unique $\QQ$-isogeny of degree $2$, and take $\varphi': E_0 \to E$ to be the dual isogeny to $\varphi$. In \cite{KaKl17}, Kane and Klagsbrun showed that
\[\lim_{H  \to \infty} \frac{\#\left\{d :\,\, |d| \le H, \,\, r_{\varphi}(E^d) = r, \,\text{ and }\, r_{\varphi'}(E_0^d) = r- u\right\}}{\#\left\{d\,:\,\, |d| \le H \, \text{ and }\ r_{\varphi}(E^d) - r_{\varphi'}(E_0^d) = u\right\}} = P^{\textup{Mat}}(r\,|\, (\infty - u) \times \infty)\]
for any integers $u$ and $r \ge \max(u, 0)$. Theorem \ref{thm:balanced} may be thought of as an extension of this result.

Without the restriction to twists satisfying $r_{\varphi}(E^d) - r_{\varphi'}(E_0^d) = u$, the distribution of $\varphi$-Selmer ranks in the quadratic twist family of $E$ for a non-balanced isogeny $\varphi: E \to E_0$ is degenerate. Specifically,
\begin{itemize}
\item If $E$ is in Case II, then $r_{\varphi}(E^d)$ blows up for $50\%$ of $d$ (that is, is greater than $r$ for any positive integer $r$ for at least $50\%$ of $d$) and is $0$ for $50\%$ of $d$.
\item If $E(\QQ)[2] \cong (\Z/2\Z)^2$, then $r_{\varphi}(E^d)$ is $0$ for $100\%$ of $d$.
\item If $E$ is in Case III, then $r_{\varphi}(E^d)$ blows up for $100\%$ of $d$.
\end{itemize}
\end{rmk}

For curves in Case IV and V, the distribution of $2$-Selmer ranks is built up from the distribution of $\varphi$-Selmer ranks in the family. For curves in Case IV, this takes the following form:
\begin{thm}
\label{thm:2IV}
Take $E/\QQ$ to be a curve in Case IV, and take $\varphi: E \to  E_0$ to be its unique balanced isogeny. Choose a nonzero integer $d_0$, and take $u = u(\varphi, d_0)$. Then, given any integers $r_2 \ge r_{\varphi} \ge \max(u, 0)$, we have
\[\lim_{H \to \infty} \frac{\# \left\{d \in X_E(d_0, H)\,:\,\, r_{\varphi}(E^d) = r_{\varphi}\, \text{ and }\, r_2(E^d) = r_2\right\}}{\# \left\{d \in X_E(d_0, H)\,:\,\, r_{\varphi}(E^d) = r_{\varphi}\right\}} = P^{\textup{Alt}}\left(r_2 - r_{\varphi}\,|\, r_{\varphi} - u\right).\]
\end{thm}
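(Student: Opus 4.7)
The plan is to reduce $r_2(E^d) - r_\varphi(E^d)$ to the kernel-dimension of an alternating $\FFF_2$-valued pairing on $\Sel^{\varphi'}(E_0^d)$, and then to show that as $d$ varies in $X_E(d_0,H)$ with $r_\varphi(E^d)$ fixed, this pairing equidistributes over all alternating matrices of the relevant size.

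The algebraic input is as follows. From the Galois module exact sequence $0 \to E^d[\varphi] \to E^d[2] \to E_0^d[\varphi'] \to 0$, one obtains the Selmer exact sequence
\[0 \longrightarrow \Sel^\varphi(E^d) \longrightarrow \Sel^2(E^d) \xrightarrow{\varphi_*} \Sel^{\varphi'}(E_0^d),\]
so that $r_2(E^d) - r_\varphi(E^d) = \dim \text{im}(\varphi_*)$. By the Cassels pairing associated to the isogeny $\varphi$, the subspace $\text{im}(\varphi_*)$ equals the left radical of a bilinear pairing $\Sel^{\varphi'}(E_0^d) \times \Sel^\varphi(E^d) \to \FFF_2$. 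The balanced hypothesis $\QQ(E[2]) = \QQ(E_0[2])$ is what one needs to identify $\Sel^\varphi(E^d)$ and $\Sel^{\varphi'}(E_0^d)$ as subspaces of a single Galois cohomology group; under this identification, the Cassels pairing becomes alternating. Since $r_\varphi(E^d) - r_{\varphi'}(E_0^d) = u$ throughout $X_E(d_0,H)$ by the very definition of $u$, conditioning on $r_\varphi(E^d) = r_\varphi$ produces an alternating pairing $M_d$ on an $(r_\varphi - u)$-dimensional space, and $r_2(E^d) - r_\varphi$ is exactly $\dim \ker M_d$.

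It remains to show that, conditional on $r_\varphi(E^d) = r_\varphi$, the matrix $M_d$ is equidistributed over all alternating $(r_\varphi - u) \times (r_\varphi - u)$ matrices over $\FFF_2$ as $d$ varies. This is the main obstacle. Following the framework of \cite{Smi22a, Smi22b}, the entries of $M_d$ can be expressed as higher Redei- or Legendre-type symbols in the prime factors of $d$; one then establishes by prime-swap perturbations together with Chebotarev-type moment arguments that all joint moments of these symbols match those of a uniformly random alternating matrix. The key technical novelty relative to the earlier work is that this equidistribution must be carried out \emph{simultaneously} with the matrix already controlling $r_\varphi$ via Theorem \ref{thm:balanced}, so that one establishes asymptotic joint independence of these two random matrices. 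Once this is done, the result follows by combining the two distributions and summing over the conditioning.
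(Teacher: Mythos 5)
Your first paragraph correctly identifies the algebraic mechanism behind the theorem, and it matches the heuristic discussion the paper gives in Section \ref{sec:IVV} immediately before and after Proposition~\ref{prop:CTP_V}: the quantity $r_2(E^d) - r_\varphi(E^d)$ is the kernel dimension of the Cassels--Tate pairing on $\Sel^{\varphi'}(E_0^d)$ modulo the image of $H^0$, which lives on a space of dimension $r_\varphi(E^d) - u$. (One small imprecision: the pairing from \cite{MS21} is an alternating self-pairing on $\Sel^{\varphi'}(E_0^d)$ whose kernel is $\varphi(\Sel^2 E^d)$, not a bilinear pairing between $\Sel^{\varphi'}(E_0^d)$ and $\Sel^{\varphi}(E^d)$; and the identity $r_\varphi(E^d) - r_{\varphi'}(E_0^d) = u$ holds only outside a finite set of $d$ where extra rational torsion appears.)

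Your second paragraph is where the gap is. You propose to write the entries of the Cassels--Tate matrix $M_d$ as higher R\'edei/Legendre-type symbols in the prime factors of $d$ and then equidistribute those symbols, jointly with the data controlling $r_\varphi$. The paper deliberately does not do this, and for a reason: the Cassels--Tate pairing for an isogeny Selmer group over $\QQ$ is a sum of local invariants requiring cocycle lifts and does not have a ready closed form as symbols in the primes of $d$, so step one of your plan is already missing a substantial amount of work. The paper's actual route sidesteps the pairing entirely. It computes refined moments of \emph{counts of Selmer element tuples}, namely the quantities $\#\mathcal{S}_{a,b}(E^d)$ defined in Definition~\ref{defn:Sab}, using the grid and Chebotarev machinery of \cite{Smi22b} (Theorem~\ref{thm:moments}, built from Proposition~\ref{prop:moments}). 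The essential structural inputs --- which you do not invoke --- are the classification of cofavored submodules of $E[\varphi]^{\oplus a} \oplus E[2]^{\oplus b}$ in Proposition~\ref{prop:IV_cof} and the classification of non-cancellable pairings in Proposition~\ref{prop:non-canc}; the count of non-cancellable alternating maps $\Gamma$ is exactly where the factor $2^{ab + \frac{1}{2}b(b+1)}$ arises, and this is how the alternating-matrix statistics enter without ever evaluating the Cassels--Tate pairing. The paper then packages the joint distribution of $(r_\varphi, r_2)$ as a measure on a category $\mathscr{C}$ of pairs $(V_0, V)$ of $\FFF_2$-spaces and applies a moment-determines-measure theorem of Sawin and Matchett Wood (Propositions~\ref{prop:IV_consis} and~\ref{prop:IV_unique}, relying on \cite[Theorem 1.6]{Saw22}) to conclude. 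Your framing of ``asymptotic joint independence of two random matrices'' is also not quite what the answer looks like: the second matrix's size depends on the first matrix's kernel rank, so it is a conditional structure that the category $\mathscr{C}$ handles in one stroke, whereas in your two-stage conditioning you would need to make this dependence precise. In short, your reduction step is sound and is used in the paper (to prove Theorem~\ref{thm:2V} from Theorem~\ref{thm:2V_II} and Proposition~\ref{prop:CTP_V}), but for Theorem~\ref{thm:2IV} itself the paper replaces the ``equidistribute $M_d$ directly'' plan with a moments-of-Selmer-counts argument plus a categorical uniqueness theorem, and your proposal leaves the genuinely hard content unaddressed.
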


For curves in Case V, the distribution is further complicated by the presence of a second balanced isogeny. We require some more notation.
\begin{notat}
Choose nonnegative integers $j$, $n$, and $m$, with $m$ even. Fix a nondegenerate alternating pairing $P_0$ on $\FFF_2^m$. Take $T: \FFF_2^n \to \FFF_2^m$ to be a random homomorphism chosen uniformly among all the possibilities, and define a random alternating pairing $P$ on $\FFF_2^n$ by $P(v, w) = P_0(Tv, Tw)$. We then define
\[P^{\text{V}}(j\,|\, n \to m)\]
to be the probability that $P$ has kernel of rank exactly $j$.
\end{notat}

\begin{thm}
\label{thm:2V}
Choose an elliptic curve $E/\QQ$ in Case V, choose distinct balanced isogenies $\varphi_1: E \to E_1$, $\varphi_2: E \to E_2$, and choose a nonzero integer $d_0$. For $i = 1,2$, take $u_i = u(\varphi_i, d_0)$, and take $u_0 = -u_1 - u_2$. Then $u_0$ is a nonnegative even integer. Further, given nonnegative integers $r_2 \ge r_{\varphi_1} \ge \max(0, u_1)$, we have
\begin{align*}
&\lim_{H \to \infty} \frac{\# \left\{d \in X_E(d_0, H)\,:\,\, r_{\varphi_1}(E^d) = r_{\varphi_1}\, \text{ and }\, r_2(E^d) =r_2\right\}}{\# \left\{d \in X_E(d_0, H)\,:\,\, r_{\varphi_1}(E^d) = r_{\varphi_1}\right\}}\\
& \qquad= P^{\textup{V}}\left(r_2 - r_{\varphi_1}\,|\, (r_{\varphi_1} - u_1) \to u_0\right).
\end{align*}
\end{thm}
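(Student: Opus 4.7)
The plan is to reduce Theorem \ref{thm:2V} to Theorem \ref{thm:balanced} on the $\varphi_1$-Selmer distribution, together with a careful analysis of the additional structure coming from the second balanced isogeny $\varphi_2$. The basic picture is that, conditional on $r_{\varphi_1}(E^d) = r_{\varphi_1}$, the $2$-Selmer group $\Sel^2 E^d$ grows beyond the image of $\Sel^{\varphi_1} E^d$ by exactly $r_2 - r_{\varphi_1}$ dimensions, and this gap is controlled by a Cassels--Tate-style pairing on a certain quotient of $\Sel^{\varphi_1'} E_1^d$. In Case IV, this pairing is essentially an unconstrained alternating pairing on an $(r_{\varphi_1} - u_1)$-dimensional space, which yields the $P^{\textup{Alt}}$ distribution of Theorem \ref{thm:2IV}. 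My first task is to identify how the presence of a rational $\varphi_2$ alters this picture.

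First I would address the arithmetic claim that $u_0 = -u_1 - u_2$ is a nonnegative even integer. Since $E$ is in Case V, both $\ker\varphi_1$ and $\ker\varphi_2$ are rational order-$2$ subgroups, forcing $E(\QQ)[2] \cong (\Z/2\Z)^2$ and producing a third rational $2$-isogeny $\varphi_3$ as well. Evenness should follow from parity relations for each $u_i = r_{\varphi_i}(E^d) - r_{\varphi_i'}(E_i^d)$: each such difference is congruent mod $2$ to a sum of local invariants (root number and Tamagawa data at the primes of bad reduction), and the three sums attached to $\varphi_1, \varphi_2, \varphi_3$ must satisfy a global compatibility forcing $u_1 + u_2$ to be even. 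Nonnegativity should follow from a descent-theoretic inequality: classes in $\Sel^{\varphi_2} E^d$ produce, via the inclusion $\ker\varphi_2 \hookrightarrow E[2]/\ker\varphi_1$, a contribution to $\Sel^{\varphi_1'} E_1^d$ that yields a lower bound on $r_{\varphi_1'}(E_1^d) + r_{\varphi_2'}(E_2^d)$, which rewrites as $u_1 + u_2 \le 0$.

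Next I would carry out the pairing analysis. The exact sequence $0 \to E^d[\varphi_1] \to E^d[2] \to E_1^d[\varphi_1'] \to 0$ and its analog for $\varphi_2$ tie together $\Sel^{\varphi_i} E^d$, $\Sel^2 E^d$, and $\Sel^{\varphi_i'} E_i^d$, and the Cassels--Tate-style pairing governing $\Sel^2 E^d / \mathrm{im}\,\Sel^{\varphi_1} E^d$ lives on a subquotient of $\Sel^{\varphi_1'} E_1^d$ of dimension $r_{\varphi_1} - u_1$. The crucial new feature of Case V is that classes coming from $\Sel^{\varphi_2} E^d$ must be pushed into the radical of this pairing, forcing it to descend further. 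Dimension counting against the $\varphi_2$-data should pin the dimension of the surviving quotient at exactly $u_0$, where the induced pairing is both nondegenerate and alternating, matching the pullback construction defining $P^{\textup{V}}$.

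Finally, to promote this algebraic picture to the claimed asymptotic distribution, I would invoke the equidistribution machinery of \cite{Smi22a} underlying the proof of Theorem \ref{thm:balanced}. The main obstacle I expect is verifying that, after conditioning on $r_{\varphi_1}(E^d) = r_{\varphi_1}$, the induced homomorphism $T \colon \FFF_2^{r_{\varphi_1} - u_1} \to \FFF_2^{u_0}$ is asymptotically uniform across the $d \in X_E(d_0, H)$. This is a genuinely joint equidistribution statement for the Selmer structures of $E^d$ at both $\varphi_1$ and $\varphi_2$, and establishing it will require tracking how variation in $d$ simultaneously perturbs both pairings rather than just the $\varphi_1$-pairing as in Case IV.
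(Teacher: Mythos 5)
Your high-level picture — a Cassels--Tate pairing controlling the gap between $r_2(E^d)$ and $r_{\varphi_1}(E^d)$, with $\varphi_2$-classes in its radical, combined with an equidistribution input — is qualitatively aligned with the paper's strategy (which relies on Proposition \ref{prop:CTP_V} and Theorem \ref{thm:2V_II}). But there is a genuine gap in how you set up the pairing, and it matters.

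You propose that the CT pairing descends to the quotient $\Sel^{\varphi_1'} E_1^d / \varphi_1(\Sel^{\varphi_2} E^d)$ and that ``dimension counting... pins the dimension of the surviving quotient at exactly $u_0$, where the induced pairing is both nondegenerate and alternating.'' This is not what happens: that quotient has dimension which \emph{varies} with $d$ (it is at most $u_0$, not equal to $u_0$), and the induced pairing on it is not nondegenerate in general — the radical of the CT pairing on $\Sel^{\varphi_1'} E_1^d$ is $\varphi_1(\Sel^2 E^d)$, which typically strictly contains $\varphi_1(\Sel^{\varphi_2} E^d)$. The crucial object you are missing is the fixed local space $\mathcal{L} = \prod_{v \in \Vplac} W_v(\varphi_1', d)/\varphi_1(W_v(\varphi_2, d))$, which does not depend on $d$ within $X_E(d_0,\infty)$ and has dimension exactly $u_0$. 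The paper shows (Proposition \ref{prop:CTP_V}) that the CT pairing factors as $\CTP(\phi,\psi) = \langle \loc(\phi),\loc(\psi)\rangle$ through a $d$-independent nondegenerate alternating pairing on $\mathcal{L}$; the $d$-dependence is entirely in where the localization image $V_d = \loc(\Sel^{\varphi_1'} E_1^d)$ lands inside $\mathcal{L}$. That is precisely the ``random homomorphism $T\colon \FFF_2^{r_{\varphi_1}-u_1}\to\FFF_2^{u_0}$'' structure defining $P^{\textup{V}}$, and Theorem \ref{thm:2V_II} is exactly the statement that $V_d$ is equidistributed like the image of a uniform random homomorphism. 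Without $\mathcal{L}$, your ``joint equidistribution'' step has no well-posed target, and your claim about the quotient is false as stated.

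A secondary consequence: once $\mathcal{L}$ is in place, the claim that $u_0$ is a nonnegative even integer is immediate — it is the dimension of a finite $\FFF_2$-space carrying a nondegenerate alternating pairing. Your proposed route via root numbers, Tamagawa data, and a descent inequality involving a third isogeny $\varphi_3$ is unnecessary and was not carried far enough to be convincing; it risks circularity and in any case does not obviously yield the evenness without essentially rediscovering the alternating pairing structure.
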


\begin{rmk}
Given an elliptic curve $E/\QQ$, we may choose a finite list of nonzero integers $d_{01}, \dots, d_{0k}$ such that, for any $H > 0$,
\[X_E(d_{01}, H), \dots, X_E(d_{0k}, H)\]
are disjoint sets whose union is the set of all squarefree integers of magnitude at most $H$. If $E$ is in Case IV or V, we can use this decomposition together with Theorems \ref{thm:balanced}, \ref{thm:2IV}, and \ref{thm:2V} to determine the distribution of $r_2(E^d)$ as $d$ varies among all squarefree integers.

Unlike for $E$ in Cases II and III, these distributions are nice, and $2^{kr_2(E^d)}$ has finite average over these families for any $k > 0$ \cite[Theorem 2.6]{Smi22b}. However, the distribution of Selmer ranks is different from the distribution of $2$-Selmer ranks constructed by Delaunay and Poonen--Rains \cite{Del01,PR12}, which is how the $2$-Selmer ranks are distributed in the quadratic twist family of a curve in Case I \cite[Theorem 1.5]{Smi22a}.

This is easiest to verify by considering the probability that the $2$-Selmer rank of $E^d$ is greater than some large integer $r$ as $d$ varies. For a curve in Case I, this probability is on the order of 
\[2^{-\frac{1}{2}r^2 + O(r)}.\]
For a curve in Case IV, the probability is instead on the order of
\[2^{-\frac{3}{8}r^2  + O(r)},\]
and for a curve in Case V it is on the order of
\[2^{-\frac{1}{4}r^2 + O(r)}.\]
Here, the implicit constants depend on the curve $E$.
\end{rmk}

\subsection{Isogeny tricks}
\label{ssec:isogeny_tricks}
In \cite{Smi22a}, we gave a method for finding the distribution of $2^k$-Selmer ranks in certain quadratic twist families of Galois modules where the $2$-Selmer structure was fixed. This method made a number of assumptions on the underlying $2$-Selmer structure, and the resulting theorem was all-or-nothing; the paper made no partial claims in the case where the assumptions were not satisfied.

We begin our work by considering what the method of \cite{Smi22a} gives in the case where some of the technical conditions are not satisfied. Our result is Theorem \ref{thm:controllable}; with an eye to future applications, this result is stated in greater generality than is needed for the rest of the paper.

When we apply this theorem to curves in Case IV and V, we do not immediately get Theorem \ref{thm:main_IVV}. The result instead breaks into cases based on a parameter we now define.
\begin{defn}
Choose a $\QQ$-isogeny $\varphi: E \to E_0$ of degree $2$ and a nonzero integer $d$. We take $\Sel^{2^{\infty}}_{\text{div}} E^d$ to be the subgroup of divisible elements in the $2^{\infty}$-Selmer group of $E^d$, and we define
\[r_{\varphi,\, \textup{div}}(E^d) = \dim_{\FFF_2} \text{im}\left(H^1(G_{\QQ}, E[\varphi]) \to H^1(G_{\QQ}, E^d[2^{\infty}])\right) \cap \Sel^{2^{\infty}}_{\text{div}} E^d.\]
\end{defn}

\begin{thm}
\label{thm:IVV_pretrick}
Suppose that $E$ is an elliptic curve in Case IV, and take $\varphi$ to be the unique balanced isogeny from $E$. Then, for $100\%$ of squarefree integers $d$, either
\begin{align*}
&r_{2^{\infty}}(E^d) \le 1 \qquad \text{or}\\
&r_{2^{\infty}}(E^d) = r_{\varphi,\, \textup{div}}(E^d) \ge 2.
\end{align*}

Supposing instead that $E$ is in Case V, take $\varphi_1$ and $\varphi_2$ to be distinct balanced isogenies defined on $E$. Then, for $100\%$ of squarefree integers $d$, either
\begin{alignat*}{2}
&r_{2^{\infty}}(E^d) \le 1\qquad&&\text{or}\\
&r_{2^{\infty}}(E^d) = r_{\varphi_1,\, \textup{div}}(E^d) \ge 2\qquad&&\text{or}\\
&r_{2^{\infty}}(E^d) = r_{\varphi_2,\, \textup{div}}(E^d) \ge 2. &&
\end{alignat*}
\end{thm}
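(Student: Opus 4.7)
The plan is to deduce Theorem \ref{thm:IVV_pretrick} from Theorem \ref{thm:controllable}, the abstract structural result that extends the method of \cite{Smi22a} to situations where its technical conditions may partially fail. Since the squarefree integers partition into finitely many families $X_E(d_0, H)$ on which the local conditions at $2$, $\infty$, and the bad primes of $E$ are fixed, I would work within a single such family and prove the dichotomy there.

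The key reformulation of the theorem's conclusion is this: the inclusion $E[\varphi] \hookrightarrow E[2]$ yields a map $H^1(G_{\QQ}, E[\varphi]) \to H^1(G_{\QQ}, E[2^\infty])[2]$, and after intersecting with the Selmer group we obtain a subgroup $S_\varphi \subset \Sel^{2^\infty}(E^d)[2]$ of rank $r_\varphi(E^d)$, inside which $S_\varphi \cap \Sel^{2^\infty}_{\textup{div}}(E^d)$ has rank $r_{\varphi,\textup{div}}(E^d)$. Since $\Sel^{2^\infty}_{\textup{div}}(E^d)[2]$ has rank equal to $r_{2^\infty}(E^d)$, the identity $r_{2^\infty}(E^d) = r_{\varphi,\textup{div}}(E^d)$ is precisely the assertion that $S_\varphi$ contains all the $2$-torsion of the divisible part of the $2^\infty$-Selmer group.

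For Case IV with its unique balanced isogeny $\varphi: E \to E_0$, I would apply Theorem \ref{thm:controllable} with the $\varphi$-isogeny Selmer structure built in, to conclude that outside a density-zero set, every $2^\infty$-Selmer class of $E^d$ not in $S_\varphi$ carries a nontrivial obstruction via the connecting map to $\Sel^{\varphi'}(E_0^d)$ that blocks it from lifting indefinitely along the $2^k$-descent tower. Thus generically $\Sel^{2^\infty}_{\textup{div}}(E^d)[2] \subset S_\varphi$, and the dichotomy follows once one notes that the boundary cases $r_{\varphi,\textup{div}}(E^d) \le 1$ are absorbed into the alternative $r_{2^\infty}(E^d) \le 1$. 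In Case V, this analysis would be applied separately to each balanced isogeny $\varphi_1, \varphi_2$, yielding that generically $\Sel^{2^\infty}_{\textup{div}}(E^d)[2]$ is contained in $S_{\varphi_1}$ or in $S_{\varphi_2}$ (or that the corank is at most $1$).

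The main obstacle is proving that classes in $\Sel^{2^\infty}(E^d)$ arising through the connecting map to $\Sel^{\varphi'}(E_0^d)$ cannot be divisible. A divisible class must lift compatibly through the entire $2^k$-descent tower, and in the balanced setting these successive lifts are governed by pairings analogous to those driving Theorem \ref{thm:balanced}; adapting the Markov-chain style ergodic argument of \cite{Smi22a} to track divisibility through the full tower, rather than Selmer ranks at a fixed level, is where the bulk of the technical work lies. For Case V, a further subtlety is that one must rule out, with probability one, the scenario in which $\Sel^{2^\infty}_{\textup{div}}(E^d)[2]$ sits in neither $S_{\varphi_1}$ nor $S_{\varphi_2}$; this reduces to a generic transversality assertion between the two isogeny subspaces of $H^1(G_{\QQ}, E[2])$, which the method of \cite{Smi22a} can establish provided one feeds it the correct $2$-Selmer input coming from the two balanced isogenies simultaneously.
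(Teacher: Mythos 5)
Your proposal gets the high-level idea right — pass to families $X_E(d_0,H)$ and grids of twists, then invoke Theorem \ref{thm:controllable} — and your reformulation $r_{2^\infty}(E^d) = r_{\varphi,\textup{div}}(E^d) \Leftrightarrow \Sel^{2^\infty}_{\textup{div}}(E^d)[2] \subseteq S_\varphi$ is correct. But the actual mechanism by which Theorem \ref{thm:controllable} is brought to bear is not what you describe, and the missing piece is exactly where the proof lives.

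The paper argues by contradiction: assume a positive proportion of $d$ satisfy $r_{2^\infty}(E^d) > \max(1, r_\varphi(E^d))$ (or the Case V analogue). Under that hypothesis one can, inside a suitable good grid class $[x_0]$, select two parameterized $2$-Selmer elements $v_1 \otimes v_2$ whose images $\phi_1,\phi_2$ are linearly independent, \emph{both} divisible for a definite fraction of $x\in[x_0]$, and \emph{not} both contained in the image of any one balanced-isogeny Selmer group. The crux is then to verify that $v_1\otimes v_2$ is a \emph{controllable tensor} in the sense of Definition \ref{defn:half_higher} — this is a concrete algebraic check, and it relies on the classification of cofavored submodules (Propositions \ref{prop:IV_cof} and \ref{prop:V_cof}), which forces the ramification subspace to take a specific form so that the deficiency condition \eqref{eq:defici} can be applied to exhibit the required $\pi_{\saaa},\pi_{\sbbb}$. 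You never isolate this tensor or its controllability. Instead you speak of ``an obstruction via the connecting map'' and ``adapting the Markov-chain style ergodic argument'' to ``track divisibility through the full tower,'' which does not match what Theorem \ref{thm:controllable} actually provides: it gives a cancellation bound on $\sum_x \exp(2\pi i\,\text{ct}_{x,k}(w))$ over a higher grid class, not a per-class obstruction statement.

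The second missing piece is how this cancellation yields the contradiction. The paper defines $Y_k$ as the union of higher grid classes in which $v_1,v_2$ still lie in $V_{2^k}(\cdot)$, notes that $\text{ct}_{x,k}(v_1\otimes v_2)=0$ whenever $x\in Y_{k+1}$, and derives the telescoping bound $\sum_{x\in Y_k}\exp(\cdots)\ge 2|Y_{k+1}|-|Y_k|$; summing with weights $2^{k-1}$ up to a fixed $M$ (depending only on the assumed density $\delta$ and the rank bound $R$), the fact that $\phi_1,\phi_2$ are simultaneously divisible on a $\ge 4^{-R-1}\delta$-fraction makes the total at least $\#[x_0]$, which contradicts the $(\log\log H)^{-1/4}$ savings from Theorem \ref{thm:controllable}. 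This finite, fixed-$M$ telescoping is the argument — there is no passage to ``the full tower,'' and no adaptation of a Markov-chain estimate. Likewise in Case V, the paper does not prove a standalone ``transversality'' lemma; it checks directly that for each of the three possible positions of $(v_1,v_2)$ relative to the $\varphi_1$- and $\varphi_2$-components of the ramification subspace, the tensor is controllable. As written, your proposal identifies the right theorem to use but not the contradiction set-up, the tensor selection, or the telescoping estimate that actually close the argument.
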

The key insight is that, by applying this theorem not just to $E$ but also to the other curves in Case IV and V that are isogenous to $E$, we are able to prove that the first possibilities above are the only ones that happen with nonzero probability. This starts with the following proposition.
\begin{prop}
\label{prop:trick}
Choose a $\QQ$-isogeny $\varphi: E \to E_0$ of degree $2$ and a nonzero integer $d$, and take $\varphi': E_0 \to E$ to be the dual isogeny to $\varphi$. Then
\[r_{2^{\infty}}(E^d) = r_{2^{\infty}}(E_0^d) = r_{\varphi,\textup{div}}(E^d) + r_{\varphi',\textup{div}}(E_0^d).\]
\end{prop}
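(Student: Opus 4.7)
The plan is to work with the maximal divisible subgroups $V := \Sel^{2^{\infty}}_{\textup{div}} E^d$ and $V_0 := \Sel^{2^{\infty}}_{\textup{div}} E_0^d$; these are abstractly $(\QQ_2/\Z_2)^{r_{2^{\infty}}(E^d)}$ and $(\QQ_2/\Z_2)^{r_{2^{\infty}}(E_0^d)}$. Because group homomorphisms send divisible subgroups into divisible subgroups, the induced maps $\varphi_* : \Sel^{2^{\infty}} E^d \to \Sel^{2^{\infty}} E_0^d$ and $\varphi'_* : \Sel^{2^{\infty}} E_0^d \to \Sel^{2^{\infty}} E^d$ restrict to maps $V \to V_0$ and $V_0 \to V$. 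Since $\varphi' \circ \varphi = [2]_E$, we have $\varphi'_* \circ \varphi_* = [2]_*$ on $V$; and $[2]_*|_V$ is surjective because $V$ is divisible. It follows that $\varphi'_*|_{V_0}$ is surjective, and by the symmetric argument via $\varphi \circ \varphi' = [2]_{E_0}$, so is $\varphi_*|_V$. Comparing $\Z_2$-coranks now yields $r_{2^{\infty}}(E^d) = r_{2^{\infty}}(E_0^d) =: r$.

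The key identification is $\dim_{\FFF_2} \ker(\varphi_*|_V) = r_{\varphi, \textup{div}}(E^d)$. The short exact sequence of Galois modules
\[0 \to E[\varphi] \to E^d[2^{\infty}] \xrightarrow{\varphi} E_0^d[2^{\infty}] \to 0\]
(noting $E[\varphi] = E^d[\varphi]$ as Galois modules, since this module has exponent $2$) produces a long exact sequence in $G_{\QQ}$-cohomology, so the kernel of $\varphi_* : H^1(G_{\QQ}, E^d[2^{\infty}]) \to H^1(G_{\QQ}, E_0^d[2^{\infty}])$ equals the image of $H^1(G_{\QQ}, E[\varphi])$ in $H^1(G_{\QQ}, E^d[2^{\infty}])$. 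Intersecting with $V$ and applying the definition of $r_{\varphi, \textup{div}}$ gives the claim; this kernel is an $\FFF_2$-vector space because $H^1(G_{\QQ}, E[\varphi])$ is $2$-torsion. By symmetry, $\dim_{\FFF_2} \ker(\varphi'_*|_{V_0}) = r_{\varphi', \textup{div}}(E_0^d)$.

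To finish, I use $\ker([2]_*|_V) = V[2]$, an $\FFF_2$-space of dimension $r$. Factoring $[2]_* = \varphi'_* \circ \varphi_*$ gives $V[2] = \varphi_*^{-1}(\ker \varphi'_*|_{V_0})$, and since $\varphi_*|_V$ is surjective we obtain the short exact sequence
\[0 \to \ker(\varphi_*|_V) \to V[2] \to \ker(\varphi'_*|_{V_0}) \to 0.\]
All three terms are $\FFF_2$-vector spaces, as they are subgroups and quotients of $V[2]$, so their dimensions add to give
\[r = r_{\varphi, \textup{div}}(E^d) + r_{\varphi', \textup{div}}(E_0^d),\]
as desired. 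The main obstacle is not conceptual but bookkeeping: one must carefully match the cohomological definition of $r_{\varphi, \textup{div}}$ to a kernel in the diagram of divisible parts and confirm that the comparison happens inside the single finite $\FFF_2$-space $V[2]$, so that no subtle $\Z_2$-extension issues arise in the dimension count.
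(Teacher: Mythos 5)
Your proof is correct and follows essentially the same route as the paper: both decompose $[2]$ on the divisible part of $\Sel^{2^\infty}E^d$ as $\varphi'_* \circ \varphi_*$, identify $\ker(\varphi_*|_V)$ with the $\varphi$-divisible Selmer contribution via the long exact sequence, and conclude by adding kernel dimensions. Your version is slightly more explicit than the paper's in one useful respect: you prove directly that $\varphi_*|_V$ and $\varphi'_*|_{V_0}$ are surjective (from divisibility and the factorization of $[2]$), which is what justifies the implicit step in the paper that $\dim\ker(\varphi'_*\circ\varphi_*) = \dim\ker\varphi_* + \dim\ker\varphi'_*$ — without $\ker(\varphi'_*|_{V_0}) \subseteq \operatorname{im}(\varphi_*|_V)$ that dimension count would not go through, and the paper leaves this to the reader.
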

\begin{proof}
We may decompose the multiplication by $2$ map on $\Sel_{\text{div}}^{2^{\infty}}E^d$ as the composition of maps
\begin{equation}
\label{eq:div_decomp}
\Sel_{\text{div}}^{2^{\infty}}E^d \xrightarrow{\,\, \varphi\,\,} \Sel_{\text{div}}^{2^{\infty}}E_0^d \xrightarrow{\,\,\varphi'\,\,} \Sel_{\text{div}}^{2^{\infty}}E^d.
\end{equation}
From the exact sequence
\[H^1(G_{\QQ}, E^d[\varphi]) \to H^1(G_{\QQ}, E^d[2^{\infty}]) \to H^1(G_{\QQ}, E^d_0[2^{\infty}]),\]
we find that the first map in \eqref{eq:div_decomp} has kernel of dimension $r_{\varphi,\, \textup{div}}(E^d)$; similarly, the second map has kernel of dimension $r_{\varphi', \, \textup{div}}(E_0^d)$. Meanwhile, their composition has kernel of dimension $r_{2^{\infty}}(E^d)$, so $r_{2^{\infty}}(E^d) = r_{\varphi,\textup{div}}(E^d) + r_{\varphi',\textup{div}}(E_0^d)$.

Since the first map in \eqref{eq:div_decomp} has finite kernel and $r_{2^{\infty}}(E^d)$ equals the corank of $\Sel^{2^{\infty}}E^d$, we have $r_{2^{\infty}}(E^d) = r_{2^{\infty}}(E_0^d)$.
\end{proof}

We can now prove Theorem \ref{thm:main_IVV} and, with it, Theorem \ref{thm:main}.
\begin{proof}[Proof of Theorem \ref{thm:main_IVV}]
Suppose Theorem \ref{thm:main_IVV} did not hold for a curve $E$ in Case IV or V, so that a positive proportion of quadratic twists of $E$ had $2^{\infty}$-Selmer corank greater than $1$. By considering the isogenous curve and applying Proposition \ref{prop:trick} if necessary, we may assume that $E$ is either in Case V or is not isogenous to a curve in Case V.

By Theorem \ref{thm:IVV_pretrick}, there is a balanced isogeny $\varphi: E \to E_0$ such that
\[r_{2^{\infty}}(E^d) = r_{\varphi,\, \textup{div}}(E^d) \ge 2.\]
occurs for a positive proportion of $d$. Take $\varphi': E_0 \to E$ to be the dual isogeny to $\varphi$; it is also a balanced isogeny. By assumption if $E$ is in Case IV and by \cite{Chil21} if $E$ is in Case V, we find that $E_0$ is in Case IV.

Proposition \ref{prop:trick} then gives that
\[r_{2^{\infty}}(E_0^d) - r_{\varphi',\,\text{div}}(E_0^d) \ge 2\]
for a positive proportion of $d$. But this is inconsistent with Theorem \ref{thm:IVV_pretrick} applied to $E_0$, and we have a contradiction.
\end{proof}

\begin{rmk}
Given a number field $F$, the above argument can be extended without much extra work to an elliptic curve $E$ over $F$ if
\begin{itemize}
\item The equation $x^2 + y^2 = -1$ has no solution over $F$ and
\item There is no imaginary quadratic field $K$ contained in $F$.
\end{itemize}
Given a field $F$ obeying these two conditions, if we order the $d$ in $F^{\times}/(F^{\times})^2$ by the minimal magnitude of the norm of an integral representative of $d$, we can show that $r_{2^{\infty}}(E^d)$ is less than two for $100\%$ of $d$. The actual distribution $2^{\infty}$-Selmer ranks then depends on the disparity of the curve over $F$ \cite{KMR13}.

More specifically, the problematic cases are first when $E$ has CM by the order of a quadratic field contained in $F$, and second when $E[2]$ has an automorphism that commutes with the connecting maps in the sense of \cite[Section 2]{Smi22b}. From \cite[Section 3.1]{Smi22b}, the latter case only can occur if $x^2 + y^2 = -1$ is solvable in $F$.

For these cases, isogeny tricks do not suffice to solve the problem, and it is unclear how to proceed.
\end{rmk}

\subsection{Outline of the paper}
In Section \ref{sec:general_Smi}, we consider the machinery for controlling the distribution of higher Selmer groups given in \cite{Smi22a} and modify it so it gives partial answers for a wider range of questions.

In Section \ref{sec:IVV}, we assemble the necessary background on curves in Cases IV and V to streamline the process of determining the distribution of $2$-Selmer ranks in their quadratic twist  families. With this done, we apply the methods of \cite{Smi22b} to determine refined moments of the $2$-Selmer groups in these families in Section \ref{sec:grids}, and then reconstruct the distribution of ranks from these moments in Section \ref{sec:ranks}. Finally, in Section \ref{sec:2infty}, we prove Theorem \ref{thm:IVV_pretrick} by applying the work of Section \ref{sec:general_Smi}.

\subsection*{Acknowledgements}
This paper was motivated by a question from Ana Caraiani and James Newton, who asked if the methods in \cite{Smi22a} could be applied to the Case V elliptic curve $X_0(15)$. The author is grateful for this question.

The author served as as Clay Research Fellow during the writing of this paper, and would like to thank the Clay Mathematics Institute for their support.

\section{General results on the distribution of higher Selmer ranks}
\label{sec:general_Smi}
The work on the distribution of higher Selmer groups in \cite{Smi22a} is stated at a couple of different levels of generality. First, there is \cite[Theorem 4.18]{Smi22a}, which gives the distribution of higher Selmer groups in certain subsets of grids of twists subject to a number of technical conditions. This is proved as a consequence of \cite[Theorem 6.2]{Smi22a}, which is an equidistribution statement for the Cassels--Tate pairing over certain finer subsets of the grid of twists, and which is proved subject to the same technical conditions as the former theorem.

Our goal for this section is to prove \cite[Theorem 6.2]{Smi22a} with some of the technical conditions weakened. The original proof of this result is long and difficult. Fortunately, there are relatively few places where the technical conditions we are modifying are used in the original proof. So, to avoid redundancy, we shall prove our generalized form of this result by enumerating where the skipped conditions appear in the proof and giving alternative arguments for these affected steps.

In contrast to the rest of this paper, in this section we will allow modules over number fields other than $\QQ$, and we will allow twist families of degree other $2$.

\subsection{Some setup from \cite{Smi22a}}
\label{ssec:CTP}

To start, fix a rational prime $\ell$ and a positive integer $k_0$. Take $\xi$ to be the image of $x$ in the quotient ring
\[\Z_{\ell}[\xi] = \Z_{\ell}[x]\Big/\left(1 + x^{L} + x^{2 L} + \dots + x^{(\ell - 1) L}\right)\quad\text{with}\quad L = \ell^{k_0 - 1}\]
and take $\omega = \xi - 1$.

 Fix a number field $F$, and take $N$ to be a $\Z_{\ell}[\xi][G_F]$ module isomorphic to some positive power of $\QQ_{\ell}/\Z_{\ell}$ as a topological module, where $G_F$ denotes $\Gal(\overline{F}/F)$ for some algebraic closure $\overline{F}$ of $F$. We assume that the action of $G_F$ on $N$ is ramified at finitely many places. We refer to $N$ as a twistable module.

Choose a Galois field extension $K/F$ contained in $\overline{F}$ and a finite set of places $\Vplac_0$ of $F$ so that $(K/F, \Vplac_0)$ unpacks $N$, in the sense of \cite[Definition 4.2]{Smi22b}. Choose a set of local conditions $(W_v(\chi))_{v \in \Vplac_0, \chi}$ for this module, and use it to define the Selmer groups of $N$ in its twist family \cite[Definition 4.3]{Smi22a}. These Selmer groups are $\Z_{\ell}[\xi]$ modules.

We will assume that $N$ has alternating structure \cite[Definnition 4.12]{Smi22a}, so that there are compatible nondegenerate alternating $\Z_{\ell}[\xi][G_F]$-equivariant pairings
\[N[\ell^k] \otimes N[\ell^k] \to \mu_{\ell^k} \quad\text{for }\, k \ge 1\]
such that the preimage of $W_v(\chi)$ in $H^1(G_v, N^{\chi}[\ell^k])$ is its own orthogonal complement with respect to the corresponding local Tate pairing for all $v \in \Vplac_0$ and all twists $\chi$.

There is then an associated antisymmetric Cassels--Tate pairing
\[\langle\,\,,\,\, \rangle: \Sel\, N^{\chi} \otimes \Sel\, N^{\chi} \to \QQ_{\ell}/\Z_{\ell}\]
for every continuous homomorphism $\chi: G_F \to \langle \xi \rangle$ \cite[Definition 4.11]{Smi22a}. The kernel of this pairing is the subgroup of divisible elements in $\Sel\, N^{\chi}$.
This pairing satisfies
\begin{equation}
\label{eq:unitary}
\langle \alpha \phi, \, \psi \rangle = \langle \phi,\, \overline{\alpha} \psi \rangle\,\,\text{ for all }\, \phi, \psi \in \Sel\,N^{\chi}\, \text{ and }\, \alpha \in  \Z_{\ell}[\xi],
\end{equation}
where $\overline{\alpha}$ is the image of $\alpha$ under the continuous ring automorphism of $\Z_{\ell}[\xi]$ taking $\xi$ to $\xi^{-1}$. 

Now choose any integer $k \ge 1$, and take $A = \Sel \,N^{\chi}$. Applying \eqref{eq:unitary}, we see that there is a well-defined pairing
\[\langle\,\,,\,\, \rangle_k: \omega^{k-1}\left (A[\omega^k]\right) \otimes \omega^{k-1} \left(A[\omega^k]\right)  \to \tfrac{1}{\ell}\Z/\Z\]
given by
\[\langle \omega^{k-1} \alpha, \,\omega^{k-1} \beta\rangle_k = \langle\alpha, \,\omega^{k-1} \beta\rangle\quad\text{for all}\quad \alpha, \beta \in A[\omega^k].\]
This pairing is antisymmetric for $k$ odd and symmetric for $k$ even, and its kernel is  $\omega^k A[\omega^{k+1}]$.

\begin{rmk}
This last construction is a slight variant of \cite[Definition 4.11]{Smi22a}. For $k \ge 1$, the long exact sequence associated to $0 \to N^{\chi}[\omega] \to N^{\chi} \to N^{\chi} \to 0$ yields an exact sequence
\begin{equation}
\label{eq:H0_omega_torsion}
0 \to \frac{H^0(G_F, N^{\chi})}{\omega H^0(G_F, N^{\chi})} \xrightarrow{\quad} \omega^{k-1} \Sel^{\omega^k} N^{\chi} \xrightarrow{\quad} \omega^{k-1} \left(\Sel\, N^{\chi}\right)\left[\omega^k\right]  \to 0.
\end{equation}
In this way, the pairing just defined extends to a pairing on $\omega^{k-1} \Sel^{\omega^k} N^{\chi}$, which is the form of the pairing considered in \cite{Smi22a}. 
\end{rmk}

\subsection{Grid classes}
\label{ssec:grid_class}
We start by summarizing some of \cite[Definition 4.7]{Smi22a}, which considers \emph{grids of twists} of the form
\[\chi: X \to \Hom(G_F, \langle \xi \rangle).\]
 Here, $X$ is a product space $\prod_{s \in S} X_s$, where $S$ is a finite set and the $X_s$ are finite sets of primes of $\overline{F}$. Among other requirements, grids are defined so $\chi((x_s)_{s \in S})$ is ramified at the primes in
\[\{x_s \cap F\,:\,\, s \in S\}\]
and at no other primes outside $\Vplac_0$ for all $(x_s)_{s  \in S}$ in $X$. Choosing some $x_0$ in $X$, we will focus on the \emph{grid class} $[x_0] \subseteq X$ defined with respect to $(K/F, \Vplac_0)$. On these grid classes, we may construct a space $V_{\omega}(x_0)$ and a map
\[\Psi_{x}: V_{\omega}(x_0) \to H^1(G_F, N[\omega])\]
for each $x \in \class{x_0}$ so the image of $\Psi_{x}$ is identified with the $\omega$-Selmer group of $N^{\chi(x)}$ under the twisting map $N[\omega] \isoarrow N^{\chi(x)}[\omega]$.

From \eqref{eq:H0_omega_torsion}, we then get an exact sequence
\[0 \to \frac{H^0(G_F, N^{\chi}(x))}{\omega H^0(G_F, N^{\chi(x)})}  \to V_{\omega}(x_0) \to \left(\Sel \,N^{\chi(x)}\right)[\omega] \to 0.\]
The image of the first term does not depend on the choice of $x$, and we will denote it by $V_{\text{tor}}(x_0)$.

For $s \in S$, we have a map
\begin{equation}
\label{eq:ram_measure}
\pi_s: V_{\omega}(x_0) \to N[\omega](-1).
\end{equation}
This map is defined such that, given any $x \in \class{x_0}$ and $v$ in $V_{\omega}(x_0)$, the value $\pi_s(v_0)$ measures the ramification of $\Psi_{x}(v)$ at $\pi_s(x) \cap F$.

\subsection{Higher grid classes}
Given $x \in \class{x_0}$ and $j \ge 1$, we define $V_{\omega^j}(x)$ to be the preimage of $\omega^{j-1} \left(\Sel\, N^{\chi(x)}\right)[\omega^j]$ in $V_{\omega}(x_0)$. Composing the pairing $\langle\,\,,\,\,\rangle_j$ with $\Psi_{x} \otimes \Psi_{x}$ then allows us to define a pairing
\[\langle\,\,,\,\,\rangle_{j,\, x}\,:\, V_{\omega^j}(x) \otimes V_{\omega^j}(x) \to \tfrac{1}{\ell}\Z/\Z\]
with kernel $V_{\omega^{j+1}}(x)$.

As in \cite[Definition 4.15]{Smi22a}, given an integer $k \ge 1$, we define the \emph{higher grid class} $\class{x_0}_k$ to be the set of $x \in \class{x_0}$ such that
\begin{itemize}
\item The spaces $V_{\omega^j}(x)$ and $V_{\omega^j}(x_0)$  are equal for $j \le k$ and
\item The pairings $\langle\,\,,\,\,\rangle_{j,\, x}$ and  $\langle\,\,,\,\,\rangle_{j, \,x_0}$ are equal for $j < k$.
\end{itemize}

\subsection{Stating the general result}
\label{ssec:general_Smi1}
\begin{defn}
\label{defn:Spre}
In \cite[Definition 4.19]{Smi22a}, we codify the of a given grid class $\class{x_0} \subseteq X$ being \emph{ready for higher work}, which is in turn based on a definition of subsets $S_{\textup{pot-pre}}, S_{\textup{pot-a/b}} \subseteq S$ of \emph{potential prefix indices} and \emph{potential a/b indices}. These definitions involve the \emph{height $H$ of $X$}, which is defined as the maximum value of the norm
\[\prod_{s \in S}N_{F/\QQ} \left( x_s \cap F\right)\]
attained as $(x_s)_s$ varies through $X$.

For this paper, we say that $\class{x_0}$ is \emph{ready for some higher work} if $H > 20$, if conditions (1), (2), and (3) of \cite[Definition 4.19]{Smi22a} are satisfied and if, for all integers $k < \log \log \log H$, there is a subset $S_{\text{pre}} \subseteq S_{\text{pot-pre}}$ of cardinality $k$ and a subspace
\[\text{k}V_{\omega} \subseteq V_{\omega}(x_0)\]
such that the kernel of \eqref{eq:ram_measure} is $\text{k}V_{\omega}$ for all $s \in S_{\text{pre}}$ and such that
\[V_{\omega}(x_0) =\text{k}V_{\omega}  + V_{\text{tor}}(x_0).\]
We then call $S_{\text{pre}}$ a \emph{set of prefix indices}.

We take $r_{\omega^j}(x_0)$ as shorthand for $\dim_{\FFF_{\ell}} \textup{k}V_{\omega}\cap V_{\omega^k}(x_0)$.
\end{defn}

\begin{defn}
\label{defn:half_higher}
Take $X$ to be a grid of height $H$, and take $\class{x_0}$ to be a grid class that is ready for some higher work. Choose a positive integer $k < \log \log \log H$, choose some set $S_{\textup{pre}}$ of prefix indices, and define $\text{k}V_{\omega, N}$ and $\text{k}V_{\omega, N^{\vee}}$ as in Definition \ref{defn:Spre}. Choose
\[w \in \left( \text{k}V_{\omega} \cap V_{\omega^k}(x_0) \right)^{\otimes 2}.\]
Given $x \in \class{x_0}$, we may evaluate $\langle\,\,,\,\,\rangle_{k, \,x}$ at $w$ so long as $V_{\omega^k}(x) \otimes V_{\omega^k}(x)$ contains $w$. In particular, we may do this evaluation throughout the higher grid class $\class{x_0}_k$. We write this evaluation as $\textup{ct}_{x, k}(w)$.

We say that $w$ is \emph{controllable} if there are indices $\saaa, \sbbb \in S_{\textup{pot-a/b}}$ such that we have
\[(\pi_{\saaa} \otimes \pi_{\saaa})(w) = 0 \quad\text{or} \quad (\pi_{\sbbb} \otimes \pi_{\sbbb})(w) = 0\quad\text{in }\, \left(N[\omega](-1)\right)^{\otimes 2}\]
and such that the tensor
\[\left(\pi_{\saaa} \otimes \pi_{\sbbb} + (-1)^{k-1} \pi_{\sbbb} \otimes \pi_{\saaa}\right)(w)\]
is not sent to $0$ under the pairing
\[\left(N[\omega](-1)\right)^{\otimes 2} \to \mu_{\ell}(-2)\]
given in \cite[Definition 4.9]{Smi22a}.
\end{defn}

\begin{thm}
\label{thm:controllable}
Choose $N$, $\xi$, and $(K/F, \Vplac_0)$ as above. Then there are real numbers $c, C > 0$ with $c$ an absolute constant and $C$ depending just on $N$, $\xi$, and $(K/F, \Vplac_0)$ for which the following holds:

Choose a grid class $\class{x_0} \subseteq X$ that is ready for some higher work as in Definition \ref{defn:Spre}, and take $H$ to be the height of $X$. We assume $H > C$.

In addition, choose a positive integer $k \le \log \log \log H$, and choose $S_{\textup{pre}}$ and a controllable tensor $w$ as in Definition \ref{defn:half_higher}.  Suppose we have
\[(r_{\omega}(x_0) + k) \cdot \left(r_{\omega}(x_0) + \dots + r_{\omega^k}(x_0)\right) \le \frac{c  \log \log \log H}{(\dim N[\omega])^2 \cdot \log \ell}.\]
Then, taking $Y = \class{x_0}_k$, we have
\begin{equation}
\label{eq:test_mean}
\left|\sum_{x \in Y} \exp\left(2\pi i \cdot \textup{ct}_{x, k}(w)\right)\right| \le \left(\log \log H\right)^{-1/4} \cdot \#\class{x_0}.
\end{equation}
\end{thm}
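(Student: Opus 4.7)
The plan is to follow the structure of the proof of Theorem 6.2 in \cite{Smi22a}, modifying the argument only at the points where the dropped technical conditions were invoked. The target is to produce cancellation in the sum $\sum_{x \in Y} \exp\bigl(2\pi i \cdot \text{ct}_{x,k}(w)\bigr)$ by exhibiting nontrivial variation in $\text{ct}_{x,k}(w)$ as $x$ moves through $\class{x_0}_k$, coming specifically from varying the primes indexed by the a/b indices $\saaa$ and $\sbbb$ supplied by the controllability hypothesis.

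First, I would partition $\class{x_0}_k$ into slices by fixing the primes $x_s$ for every $s$ other than $\saaa$ and $\sbbb$, so that on each slice the sum reduces to a two-variable character sum over the remaining pair. Next, I would derive a variation formula for $\text{ct}_{x,k}(w)$ along each slice, expressing the difference of pairing values between two nearby $x$'s as an explicit contribution coming from ramification at $\saaa$ and $\sbbb$. This expression uses the compatibility of the Cassels--Tate pairing with changes of ramification (the ingredient \eqref{eq:unitary} together with the (anti)symmetry of $\langle\,,\,\rangle_k$) and the cup-product pairing $\left(N[\omega](-1)\right)^{\otimes 2} \to \mu_{\ell}(-2)$ from Definition 4.9 of \cite{Smi22a}. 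The controllability hypothesis on $w$ is precisely engineered to make this formula nondegenerate: the vanishing of one of $(\pi_{\saaa} \otimes \pi_{\saaa})(w)$ or $(\pi_{\sbbb} \otimes \pi_{\sbbb})(w)$ removes the diagonal obstruction, while nonvanishing of the cross term $(\pi_{\saaa} \otimes \pi_{\sbbb} + (-1)^{k-1}\pi_{\sbbb} \otimes \pi_{\saaa})(w)$ under that pairing produces genuine mod-$\ell$ variation.

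Third, I would feed the variation formula into a Chebotarev-type equidistribution statement for primes of bounded norm in the requisite splitting fields (the analog of the prime-counting input used in \cite[Section 6]{Smi22a}) to extract cancellation of strength $(\log \log H)^{-1/4}$. The numerical hypothesis $(r_\omega(x_0) + k)\cdot(r_\omega(x_0) + \dots + r_{\omega^k}(x_0)) \le c\,\log\log\log H/((\dim N[\omega])^2 \log \ell)$ enters here: it bounds the number of classes needed to specify all the relevant lower-grade Selmer data, so that each slice retains a large supply of admissible primes at $\saaa, \sbbb$ and the resulting Chebotarev error remains smaller than the main term.

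The main obstacle will be isolating exactly where the omitted technical conditions of \cite[Definition 4.19]{Smi22a} were used in the original proof and verifying that they can be bypassed. The original conditions gave a clean induction on $k$ by controlling the sizes of auxiliary cohomology groups and the behaviour of the full pairings $\langle\,,\,\rangle_{j,x}$ for $j < k$. Here, the structural agreement already built into the definition of $\class{x_0}_k$ forces these lower pairings to be constant across the class, so the induction steps that used the omitted conditions are replaced by direct appeals to the definition of the higher grid class. The conceptual crux of the generalization is recognizing that this structural agreement on lower-grade data, together with the controllability of the single tensor $w$, is enough to run the variation-plus-equidistribution argument, with all uses of the remaining technical conditions in \cite{Smi22a} either inessential or implied by conditions (1)--(3) of \cite[Definition 4.19]{Smi22a} together with the revised ready-for-some-higher-work assumption.
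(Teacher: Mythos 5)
Your high-level framing is correct: the paper does indeed follow the proof of \cite[Theorem 6.2]{Smi22a} and only modifies the steps where the dropped conditions of \cite[Definition 4.19]{Smi22a} enter, and the controllability hypothesis is indeed used to pick the a/b indices $\saaa, \sbbb$ and feed the governing-expansion machinery. But your proposal stops at the level of ``modify the proof appropriately'' and does not identify the specific technical device the paper actually needs, which is where the real content lies.

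The crucial step you are missing is the reduction of the controllable tensor $w$ to the form $w = \sum_{i=1}^m v_i \otimes v_i'$ with $v_i \in V_0 := \text{k}V_{\omega} \cap V_{\omega^k}(x_0)$ and $v_i' \in V_{0a} := \ker \pi_{\saaa} \cap V_0$ (using the hypothesis that $(\pi_{\saaa}\otimes\pi_{\saaa})(w)=0$ together with the $(-1)^k$-symmetry of $\ct_x$), followed by lifting the whole governing-expansion argument from $N$ to a power $N^{\oplus m+c}$. Concretely, the paper extends $v_1, \dots, v_m$ to a basis $v_1, \dots, v_{m+c}$ of $\text{k}V_\omega$, observes that this tuple parameterizes an $\omega$-Selmer element of $(N^{\oplus m+c})^{\chi(x_0)}$, and chooses a governing expansion for that tuple; this replaces the single-vector governing expansion of \cite[Notation 8.15]{Smi22a}. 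Because $m + c = \dim \text{k}V_\omega$, the bound on the number of governing classes from \cite[Proposition 8.16]{Smi22a} still applies, and the analogue of \cite[Theorem 8.12]{Smi22a} can be run over $N^{\oplus m}$ for the pair $v_a = (v_1,\dots,v_m)$, $v_b = (v_1',\dots,v_m')$, with the nonvanishing of the factor $\nu$ being exactly the second clause of controllability. Your ``variation formula plus Chebotarev'' sketch, with the assertion that ``the structural agreement on lower-grade data\dots is enough,'' does not supply any of this: it does not explain how to split $X_{\saaa}(Z_{\text{pre}})$ into governing classes when $\text{k}V_\omega$ is a proper subspace of $V_\omega(x_0)$, nor does it explain how the strict-signature closure arguments of \cite[Lemmas 8.10, 8.11]{Smi22a} carry over once the ambient module has been changed (the paper needs a governing expansion for \emph{every} $v \in \text{k}V_\omega$, obtained precisely because $v_1,\dots,v_{m+c}$ spans). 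Without the lift to $N^{\oplus m+c}$, the argument does not close.
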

\begin{rmk}
We mention that the module $N \oplus N^{\vee}$ has alternating structure even if $N$ does not; here, $N^{\vee}$ denotes the dual twistable module to $N$ \cite[Definition 4.9]{Smi22a}. So the assumption that $N$ has alternating structure in Theorem \ref{thm:controllable} may be circumvented by instead applying this result to $N \oplus N^{\vee}$.
\end{rmk}

\subsection{The proof of Theorem \ref{thm:controllable}}
Choose $\saaa$ and $\sbbb$ from $w$ as in Definition \ref{defn:half_higher}. Without loss of generality, we may assume that $(\pi_{\saaa} \otimes \pi_{\saaa})(w)$ is $0$. Taking $V_0 = \text{k}V_{\omega} \cap V_{\omega^k}(x_0)$ and $V_{0a} = \ker \pi_{\saaa} \cap V_0$, we have
\[w \in \left(V_0 \otimes V_{0a} \right) \oplus \left(V_{0a} \otimes V_0\right).\]
Since
\[\ct_x(v_1 \otimes v_2) = (-1)^k \ct_x(v_1 \otimes v_2) \quad\text{for all } \,v_1, v_2 \in V_0\]
for all $x$ in $Y$ \cite[Remark 4.14]{Smi22a}, we may find a controllable tensor $w_0$ in $V_0 \otimes V_{0a}$ such that $\ct_x(w_0) = \ct_x(w)$  for all $x$ in $Y$. So we may assume $w$ lies in $V_0 \otimes V_{0a}$ without loss of generality. Write $w$ in the form
\[w = \sum_{i = 1}^m v_{i} \otimes v_{i}'\quad\text{with }\, v_i \in V_0, v'_i \in V_{0a}\text{ for } i \le m,\]
where $m$ is at most $\dim V_0$.

With this done, the proof of the result largely follows the proof of \cite[Theorem 6.2]{Smi22a}. We will now go through where a deviation in the proof is needed:

First, rather than selecting $\saaa$ and $\sbbb$ according to the procedure in \cite[Definition 6.3]{Smi22a}, we select them as above. The same arguments as in \cite[Section 6.2]{Smi22a} then allows us to reduce Theorem \ref{thm:controllable} to the analogus statement on a set $\class{x_0} \cap (Z_{\text{pre}} \times X_{\saaa}(Z_{\text{pre}}) \times X_{\sbbb}(Z_{\text{pre}}))$ as defined in \cite[Theorem 6.4]{Smi22a}.

We next modify \cite[Notation 8.15]{Smi22a}. In this step, we had split the primes in $X_{\saaa}(Z_{\text{pre}})$ into equivalence classes based on their governing class defined from a governing expansion corresponding to a single vector $v_a \in V_{\omega^k, N}(x_0)$. Instead of this, first choose $v_{m+1}, \dots, v_{m+c}$ in $\text{k}V_{\omega}$ so that $v_1, \dots, v_{m+c}$ is a basis for $\text{k}V_{\omega}$. We note that $(v_1, \dots, v_{m+c})$ parameterizes an $\omega$-Selmer element of $\left(N^{\oplus m+c}\right)^{\chi(x_0)}$, and we choose a governing expansion corresponding to this tuple for $N^{\oplus m +c}$. Since $m+c$ equals $\dim \text{k}V_{\omega}$, we see that the bound for the number of governing classes given in \cite[Proposition 8.16]{Smi22a} still holds for this alternative setup. We then split $X_{\saaa}(Z_{\text{pre}})$ into classes based on this governing expansion. This is also the governing expansion to which we will apply \cite[Propositions  8.19 and 8.20]{Smi22a}.

We finally need to modify \cite[Theorem 8.12]{Smi22a} to control $\ct_{x, k}(w)$ under the condition that $X_{\mathbf{a}}$ lies in a single equivalence class under the definition of the previous paragraph. Our approach is to attempt to apply the argument of this theorem over $N^{\oplus m}$ to $v_a = (v_1, \dots, v_m)$ and $v_b = (v'_1, \dots, v'_m)$. We check how this proof must be modified.

First, we note that the strict signature at $x$ for $N^{\oplus m}$ is determined from the strict signature at $x$ for $N$, so we do not need to increase the bound on the number of possible strict signatures. The set of points containing a given strict signature is closed in this instance by \cite[Lemma 8.10]{Smi22a} and the fact that we have a governing expansion for every $v \in \text{k}V_{\omega}$, as follows from the assumption that $v_1, \dots, v_{m + c}$ generate this space. By applying \cite[Lemma 8.11]{Smi22a} with $S^{\circ}$ taken to be $S_{\text{pre}}$, we may prove that the set of points with a given strict signature in a given higher class $\class{x_0}_k$ is also closed.

Finally, we still have that the factor $\nu$ appearing in \cite[Theorem 8.12]{Smi22a} is nonzero; this is equivalent to the assumption we have made that
\[\left(\pi_{\saaa}(v_1), \dots, \pi_{\saaa}(v_m)\right), \,\,\left(\pi_{\sbbb}(v_1'), \dots, \pi_{\sbbb}(v_m')\right) \in N[\omega]^{\oplus m}(-1)\]
have nonzero pairing in $\mu_{\ell}(-2)$. The rest of the argument proceeds as in \cite{Smi22a}.
 \qed

\section{Algebraic results for elliptic curves in Cases IV and V}
\label{sec:IVV}

\subsection{Tamagawa ratios}
\label{ssec:Tama}

The Selmer groups associated to a balanced isogeny in a quadratic twist family have a nice, non-degenerate distribution. The reason for this comes down to the theory of their Tamagawa ratios, which we go through now.
\begin{defn}
\label{defn:Tama}
Suppose $\varphi: E \to E_0$ is a $\QQ$-isogeny of degree $2$.  Given a nonzero integer $d$ and a rational place $v$, we take
\[W_v(\varphi, d) = \ker\left(H^1(G_v, E^{d}[\varphi]) \to  H^1(G_v, E^{d})\right).\]
We then define the \emph{Tamagawa ratio} to be the product
\[\mathcal{T}(\varphi, d) = \prod_{v} \frac{\# W_v(\varphi, d)}{2}.\]
If $v$ is not a place of bad reduction for $E^d$ or $2$ or $\infty$, then $W_v(\varphi, d)$ consists of the unramified cocycle classes in $H^1(G_v, E^d[\varphi])$ and so has order $2$. As a result, the product defining the Tamagawa ratio converges. 

We then take $u(\varphi, d)$ to be the integer satisfying
\[\mathcal{T}(\varphi, d) = 2^{u(\varphi, d)}.\]

Define the isogeny Selmer group $\Sel^{\varphi} E^d$ as the preimage of $\Sel^{2^{\infty}} E^d$ in $H^1(G_{\QQ}, E^d[\varphi])$. Taking $\varphi': E_0 \to E$ to be the dual isogeny to $\varphi$ and defining its Selmer group analogously, we have
\begin{equation}
\label{eq:Tama}
u(\varphi, d) = \dim_{\FFF_2} \Sel^{\varphi}(E^d) - \dim_{\FFF_2} \Sel^{\varphi'}(E_0^d),
\end{equation}
as follows from the Greenberg--Wiles' formula \cite[Theorem 8.7.9]{Neuk08}.

In the case that $\varphi$ is balanced, the long exact sequence associated to
\begin{equation}
\label{eq:varphi_ES_loc}
0 \to E^d[\varphi]  \to E^d[2^{\infty}] \to E_0^d[2^{\infty}] \to 0
\end{equation}
for the group $G_{\QQ}$ gives
\[\dim_{\FFF_2} \Sel^{\varphi} E^d = 1 + r_{\varphi}(E^d)\quad\text{if}\quad E^d(\QQ)[2] = E^d(\QQ)[4] \quad\text{and}\quad E_0^d(\QQ)[2] = E_0^d(\QQ)[4].\]
If $p$ is an odd prime where $d$ has odd valuation but $E$ is good, the long exact sequence associated to \eqref{eq:varphi_ES_loc} over $G_p$ yields the exact sequence
\begin{equation}
\label{eq:balanced}
0 \to \FFF_2 \to H^0(G_{p}, E^d[2]) \to H^0(G_{p}, E_0^d[2]) \to W_p(\varphi, d) \to 0.
\end{equation}
Since $\varphi$ is balanced, the two middle terms in this sequence are isomorphic, and we see that $\#W_p(\varphi, d) = 2$.

So we find that the $u(\varphi, d_0)$ given in Notation \ref{notat:early_Tama} is well defined and equals the $u(\varphi, d_0)$ constructed above.
\end{defn}

\begin{rmk}
If $\varphi$ were not balanced, the middle terms in \eqref{eq:balanced} would not need to be isomorphic, and indeed often would not be isomorphic. This is what leads to the degenerate behavior of the Selmer groups associated to a non-balanced isogeny in a quadratic twist family.
\end{rmk}

\begin{center}
\begin{figure}

\begin{tabular}{c c c c c } 
\begin{tikzpicture}
\filldraw[white](0pt, 0pt) circle (2pt);
\filldraw[white](-10pt, 0pt) circle (2pt);

\filldraw[white](10pt, 45pt) circle (2pt);

\filldraw[black](0pt,20pt) circle (3.2pt);

\end{tikzpicture} &

\begin{tikzpicture}
\filldraw[white](-10pt, 0pt) circle (2pt);
\filldraw[white](35pt, 45pt) circle (2pt);

\draw[very thick] (0pt, 20pt) -- (25pt, 20pt);
\filldraw[black](0pt,20pt) circle (3.2pt);
\filldraw[black](25pt,20pt) circle (3.2pt);

\end{tikzpicture} &

\begin{tikzpicture}
\filldraw[white](-10pt, 0pt) circle (2pt);
\filldraw[white](30pt, 45pt) circle (2pt);

\filldraw[black](0pt, 20pt) circle (3.2pt);
\filldraw[black](20pt, 20pt) circle (3.2pt);
\filldraw[black](-10pt, 37.3pt) circle (3.2pt);
\filldraw[black](-10pt, 2.7pt) circle (3.2pt);

\draw[very thick] (3pt, 20pt) -- (17pt, 20pt);
\draw[very thick] (-1.5pt, 17.4pt) -- (-8.5pt, 5.3pt);
\draw[very thick] (-1.5pt, 22.6pt) -- (-8.5pt, 34.7pt);

\end{tikzpicture} &
\begin{tikzpicture}
\filldraw[white](-20pt, 0pt) circle (2pt);
\filldraw[white](50pt, 45pt) circle (2pt);

\filldraw[black](0pt, 20pt) circle (3.2pt);
\filldraw[black](30pt, 20pt) circle (3.2pt);
\filldraw[black](-10pt, 37.3pt) circle (3.2pt);
\filldraw[black](-10pt, 2.7pt) circle (3.2pt);
\filldraw[black](40pt, 37.3pt) circle (3.2pt);
\filldraw[black](40pt, 2.7pt) circle (3.2pt);

\draw[very thick] (3pt, 20pt) -- (27pt, 20pt);
\draw[very thick] (-1.5pt, 17.4pt) -- (-8.5pt, 5.3pt);
\draw[very thick] (-1.5pt, 22.6pt) -- (-8.5pt, 34.7pt);
\draw[very thick] (31.5pt, 17.4pt) -- (38.5pt, 5.3pt);
\draw[very thick] (31.5pt, 22.6pt) -- (38.5pt, 34.7pt);

\end{tikzpicture}
&
\begin{tikzpicture}
\filldraw[white](0pt, 0pt) circle (2pt);
\filldraw[white](80pt, 45pt) circle (2pt);

\filldraw[black](20pt, 20pt) circle (3.2pt);
\filldraw[black](60pt, 20pt) circle (3.2pt);
\filldraw[black](40pt, 20pt) circle (3.2pt);

\filldraw[black](10pt, 37.3pt) circle (3.2pt);
\filldraw[black](10pt, 2.7pt) circle (3.2pt);
\filldraw[black](70pt, 37.3pt) circle (3.2pt);
\filldraw[black](70pt, 2.7pt) circle (3.2pt);
\filldraw[black](70pt, 2.7pt) circle (3.2pt);
\filldraw[black](40pt, 40pt) circle (3.2pt);

\draw[very thick] (23pt, 20pt) -- (57pt, 20pt);
\draw[very thick] (18.5pt, 17.4pt) -- (11.5pt, 5.3pt);
\draw[very thick] (18.5pt, 22.6pt) -- (11.5pt, 34.7pt);
\draw[very thick] (61.5pt, 17.4pt) -- (68.5pt, 5.3pt);
\draw[very thick] (61.5pt, 22.6pt) -- (68.5pt, 34.7pt);
\draw[very thick] (40pt, 23pt) -- (40pt, 37pt);
\end{tikzpicture}

\end{tabular}
\caption{The $5$ possible forms of the $2$-isogeny graph of a rational elliptic curve.}
\label{fig:isogenies}
\end{figure}
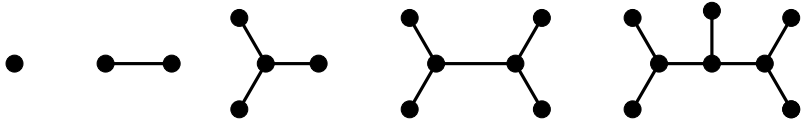
\end{center}

\subsection{The Cassels--Tate pairing on isogeny Selmer groups}
Given a $\QQ$-isogeny $\varphi: E \to E_0$ of degree $2$ with dual isogeny $\varphi'$ and a nonzero integer $d$, we may consider the Cassels--Tate pairing associated as in \cite{MS21} to the exact sequence
\[0 \to E^d[\varphi] \to E^d[2] \xrightarrow{\,\,\varphi\,\,}E_0^d[\varphi'] \to 0\]
of Galois modules. This gives an alternating pairing
\[\CTP\,:\, \Sel^{\varphi'} E_0^d \times \Sel^{\varphi'} E_0^d  \to \tfrac{1}{2}\Z/\Z\]
whose kernel equals $\varphi(\Sel^2 E^d)$ \cite[Section 6.1]{MS21}. This kernel necessarily contains the kernel of the map from $\Sel^{\varphi'} E_0^d$ to  $\Sel^{2^{\infty}} E_0^d$, which equals the image of $H^0(G_{\QQ}, E^d[2])$ under the associated connecting map for all but finitely many squarefree $d$.

Our general expectation is that, as $d$ varies, this pairing should behave like a random alternating matrix over $\FFF_2$ subject to the restriction that the kernel contains the image of $H^0(G_{\QQ}, E^d[2])$. Theorem \ref{thm:2IV} shows that this expectation is correct if $E$ is in Case IV and $\varphi$ is the unique balanced isogeny from $E$. It is also generally true in cases where the isogeny Selmer groups are degenerate. For example, if $\varphi: E \to E_0$ is the $\QQ$ isogeny of degree $2$ from a curve in Case II, then among the $50\%$ of twists where $r_{\varphi}(E^d)$ is $0$, the $2$-Selmer rank of $E^d$ is $r$ with probability 
\[\lim_{n \to \infty} \tfrac{1}{2}P^{\text{Alt}}(r\,|\, r + 2n),\]
which is consistent with the Cassels--Tate pairing on the large groups $\Sel^{\varphi'} E_0^d$ being uniformly distributed.

This general expectation is not the case for elliptic curves in Case V. In this case, the form of the Cassels--Tate pairing on the isogeny Selmer groups is heavily restricted.

To help study curves in Case V, we reproduce in Figure \ref{fig:isogenies} the  five forms that the $2$-isogeny graph of an elliptic curve over $\QQ$ can take, as recorded in \cite{Chil21}. In these graphs, vertices represent elliptic curves over $\QQ$, and edges represent rational $2$-isogenies.

In these graphs, $E(\QQ)[2] = (\Z/2\Z)^2$ if and only if the vertex corresponding to $E$ has degree $3$. So we see that $E$ is in Case V only if it corresponds to the central node of the final graph, and that the other nodes of degree $3$ in this graph are in Case IV.

\begin{defn}
\label{defn:V_setup}
Suppose $E$ is a curve in Case V. From Figure \ref{fig:isogenies}, it has two distinct balanced isogenies
\[\varphi_1: E \to E_1\quad\text{and}\quad \varphi_2: E \to E_2,\]
where $E_1$ and $E_2$ are curves in Case IV. Take $\varphi'_1$ and $\varphi'_2$ to be the corresponding dual isogenies.

Given any rational place $v$ and nonzero integer $d$, we have
\[\varphi_1(W_v(\varphi_2, d)) \subseteq W_v(\varphi_1', d) \quad\text{and}\quad\varphi_2(W_v(\varphi_1, d)) \subseteq W_v(\varphi_2', d).\]
Take $\Vplac$ to be the set of bad places of $E$ together with $2$ and $\infty$.
Since $\varphi_1$ and $\varphi_2$ are balanced, we have
\[\#W_v(\varphi_1, d) = \#W_v(\varphi_2, d) = \#W_v(\varphi'_1, d) = \#W_v(\varphi'_2, d) = 2\]
for $v$ outside $\Vplac$, so the above inclusions are equalities for $v$ outside $\Vplac$.

As a result, if we define
\[\mathcal{L}(E_1^{d}) = \prod_{v \in \Vplac} W_v(\varphi_1', d)/ \varphi_1(W_v(\varphi_2, d)),\]
the natural localization maps define an injection
\[\text{loc}: \Sel^{\varphi_1'} E_1^d/ \varphi_1(\Sel^{\varphi_2} E^d) \hookrightarrow \mathcal{L}(E_1^d).\]
Note that, if $d$ lies in $X_E(d_0, \infty)$ for some nonzero integer $d_0$, we have
\[\mathcal{L}(E_1^d) = \mathcal{L}(E_1^{d_0}).\]
Given this, we may ask how the image of the localization map varies in $\mathcal{L}(E_1^{d_0})$ as $d$ varies through $X_E(d_0, \infty)$.
\end{defn}

\begin{thm}
\label{thm:2V_II}
Choose a curve $E/\QQ$ in Case V and a nonzero integer $d_0$. Define $\varphi_1: E \to E_1$, $\varphi_1'$ and $\mathcal{L} = \mathcal{L}(E_1^{d_0})$ as in Definition \ref{defn:V_setup}. Choose a subgroup $V$ of $\mathcal{L}$ and an integer $r \ge \max(0, -u(\varphi_1, d_0))$. Then
\[\lim_{H \to \infty} \frac{\# \left\{d \in X_E(d_0, H)\,:\,\, r_{\varphi_1'}(E_1^d) = r\, \text{ and }\,\, \textup{loc}(\Sel^{\varphi_1'} E_1^d) = V \right\}}{\# \left\{d \in X_E(d_0, H)\,:\,\, r_{\varphi_1'}(E^d) = r\right\}}\]
equals the probability a uniformly selected random homomorphism from $\FFF_2^r$ to $\mathcal{L}$ has image equal to $V$.
\end{thm}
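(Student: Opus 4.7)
Proof proposal. The plan is to refine the distributional statement of Theorem~\ref{thm:balanced} by tracking not only the rank $r_{\varphi_1'}(E_1^d)$ but also the specific image $\textup{loc}(\Sel^{\varphi_1'} E_1^d)$ inside the fixed finite group $\mathcal{L} = \mathcal{L}(E_1^{d_0})$. Because $X_E(d_0, H)$ is cut out by congruence conditions at the places in $\Vplac$, the local groups $W_v(\varphi_1', d)$ and $\varphi_1(W_v(\varphi_2, d))$ do not depend on $d \in X_E(d_0, H)$, so $\mathcal{L}$ is genuinely a fixed receiver as $d$ varies.

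For each subgroup $V \subseteq \mathcal{L}$, I would introduce the refined Selmer group
\[\Sel^{\varphi_1'}_V E_1^d = \{c \in \Sel^{\varphi_1'} E_1^d : \textup{loc}(c) \in V\},\]
which is itself the Selmer group of a modified Selmer structure, obtained from the one defining $\Sel^{\varphi_1'} E_1^d$ by shrinking the combined local conditions at $\Vplac$ to the preimage of $V$ inside $\prod_{v \in \Vplac} W_v(\varphi_1', d)$. First, I would verify that the proof of Theorem~\ref{thm:balanced} carries over to this refined structure, yielding a distribution of the form $P^{\textup{Mat}}(\,\cdot\,\,|\, (\infty - u_V) \times \infty)$ for $\dim \Sel^{\varphi_1'}_V E_1^d$, where the shift $u_V$ is determined from $u(\varphi_1, d_0)$ and $[\mathcal{L} : V]$ via a Greenberg--Wiles calculation.

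The key observation linking these refined distributions to the target statement is that $\textup{loc}(\Sel^{\varphi_1'} E_1^d) \subseteq V$ if and only if $\dim \Sel^{\varphi_1'}_V E_1^d = \dim \Sel^{\varphi_1'} E_1^d$, so the first step determines the asymptotic probability that $\textup{loc}(\Sel^{\varphi_1'} E_1^d) \subseteq V$ for every subgroup $V$. M\"obius inversion on the lattice of subgroups of $\mathcal{L}$ then yields the joint distribution of the pair $(r_{\varphi_1'}(E_1^d), \textup{loc}(\Sel^{\varphi_1'} E_1^d))$. A combinatorial check using the explicit formulas for $P^{\textup{Mat}}$ from \cite[Case 2.12]{Smi22b} should match the resulting conditional distribution of $\textup{loc}(\Sel^{\varphi_1'} E_1^d)$ given $r_{\varphi_1'}(E_1^d) = r$ with the image distribution of a uniformly random homomorphism $\FFF_2^r \to \mathcal{L}$.

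The main obstacle is the verification in the first step. Because $V$ is generally not a product of subgroups of the individual quotients $W_v(\varphi_1', d)/\varphi_1(W_v(\varphi_2, d))$, the refined local conditions at $\Vplac$ are not of product form inside $\prod_v H^1(G_v, E_1^d[\varphi_1'])$, and one must confirm that the proof of Theorem~\ref{thm:balanced} does not essentially rely on such product structure. The relevant inputs---Poitou--Tate duality at the places of $\Vplac$ and Chebotarev-style equidistribution of the ramification invariants at primes dividing $d$---are insensitive to the internal structure of the conditions at $\Vplac$, so the framework should apply with the modified shift $u_V$; nonetheless, care is needed to ensure that no symmetry assumption used in the original proof is broken by the refinement.
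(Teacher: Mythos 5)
There is a genuine gap in the step where you pass from the marginal distributions of $\dim \Sel^{\varphi_1'}_V E_1^d$ to $P(\textup{loc}(\Sel^{\varphi_1'} E_1^d) \subseteq V)$. Your ``key observation'' is correct as stated: $\textup{loc}(\Sel^{\varphi_1'} E_1^d) \subseteq V$ if and only if $\dim \Sel^{\varphi_1'}_V E_1^d = \dim \Sel^{\varphi_1'} E_1^d$. But this event is a statement about the \emph{joint} distribution of the two ranks. Knowing the marginal law of $\dim \Sel^{\varphi_1'}_V E_1^d$ for each fixed $V$ does not tell you how to decompose $P(\dim \Sel^{\varphi_1'}_V = t)$ into $P(\dim \Sel^{\varphi_1'}_V = t, \dim \Sel^{\varphi_1'} = t)$ and $P(\dim \Sel^{\varphi_1'}_V = t, \dim \Sel^{\varphi_1'} > t)$. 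The M\"obius inversion you propose downstream therefore starts from quantities you have not actually obtained: inverting $P(\textup{loc} \subseteq V)$ over the subspace lattice requires having these probabilities, and they are joint data, not marginal data. Step 1 by itself does not furnish them.

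The paper closes exactly this gap by working with moments that carry joint information across subspaces of $\mathcal{L}$. Theorem \ref{thm:moments} computes $E[\#\mathcal{S}_{a,V}(E^d)]$, where $\mathcal{S}_{a,V}$ counts $a$-tuples of $\varphi_1$-Selmer elements that localize (as a tuple) into a chosen subspace $V \subseteq \prod_{v\in\Vplac_0} W_v(\varphi,d_0)^{\oplus a}$. Taking $V = \langle v(O)\rangle$ for an object $O = (\lambda\colon \FFF_2^a \to \mathcal{L})$ of the category $\mathscr{D}$, these refined tuple-moments compute $E[\#\Inj(O, A(d))]$ where $A(d)$ is the pair $\bigl(\Sel^{\varphi_1'}E_1^d, \textup{loc}\bigr)$, and Propositions \ref{prop:V_consis} and \ref{prop:V_uniqueness} (via \cite{Saw22}) then recover the joint law of $(r_{\varphi_1'}, \textup{loc})$ from these moments. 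This is a strictly richer family of averages than the marginals of single refined Selmer ranks. Your secondary concern about non-product local conditions is also substantive --- $\Sel^{\varphi_1'}_V$ with $V$ not of product form is not literally a Selmer group for any choice of local conditions, so the machinery behind Theorem \ref{thm:balanced} cannot be invoked directly --- but even setting that aside, the marginal-to-joint passage is the essential missing idea.
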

We will prove this theorem in Section \ref{sec:ranks}.

The importance of this theorem comes from the following proposition.

\begin{prop}
\label{prop:CTP_V}
Choose a curve $E/\QQ$ in Case V and nonzero integer $d_0$. Define  $\varphi_1: E \to E_1$,  $\varphi'_1$, and $\mathcal{L} = \mathcal{L}(E_1^{d_0})$ as in Definition \ref{defn:V_setup}. Then there is a nondegenerate alternating pairing
\[\langle\,\,,\,\, \rangle\colon \mathcal{L} \times \mathcal{L} \to \tfrac{1}{2}\Z/\Z\]
such that, for any $d \in X_E(d_0, H)$ and $\phi, \psi \in \Sel^{\varphi'_1} E_1^d$,  we have
\[\CTP(\phi, \psi) = \langle \textup{loc}(\phi),\, \textup{loc}(\psi) \rangle,\]
where the Cassels--Tate pairing is associated to the sequence
\begin{equation}
\label{eq:VIII_CT}
0 \to E^d[\varphi_1] \to E^d[2] \xrightarrow{\,\,\varphi_1\,\,} E_1^d[\varphi_1'] \to 0
\end{equation}
\end{prop}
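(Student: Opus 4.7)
The plan is to combine the local formula for the Cassels--Tate pairing from \cite[Section 6.1]{MS21} with the Case V-specific splitting of $E^d[2]$ as a $G_{\QQ}$-module.

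First I would note that, because $\varphi_1$ and $\varphi_2$ are distinct rational degree-two isogenies, $E^d[\varphi_1]$ and $E^d[\varphi_2]$ are distinct Galois-stable order-two subgroups of $E^d[2]$, giving the $G_{\QQ}$-module decomposition $E^d[2] = E^d[\varphi_1] \oplus E^d[\varphi_2]$. Consequently, \eqref{eq:VIII_CT} admits a Galois-equivariant section $s \colon E_1^d[\varphi_1'] \isoarrow E^d[\varphi_2] \hookrightarrow E^d[2]$, and $H^1(G_v, E^d[2])$ splits as $H^1(G_v, E^d[\varphi_1]) \oplus H^1(G_v, E^d[\varphi_2])$ at each place. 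Picking for each $v$ a local 2-Selmer lift $\psi_v' \in W_v^{(2)}(E^d)$ with $\varphi_1 \psi_v' = \psi_v$ and writing $\psi_v' = \psi_v^{(1)} + s\psi_v$ via the splitting (with $\psi_v^{(1)} \in H^1(G_v, E^d[\varphi_1])$), the local formula becomes
\[\CTP(\phi, \psi) \;=\; \sum_v c_v(\phi_v, \psi_v), \qquad c_v(\phi_v, \psi_v) \;=\; \langle \phi_v,\, \psi_v^{(1)}\rangle_v,\]
where $\langle,\rangle_v$ is the local Tate pairing via the Cartier duality $E_1^d[\varphi_1'] \otimes E^d[\varphi_1] \to \mu_2$. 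Well-definedness of $\psi_v^{(1)}$ modulo $W_v^{(2)}(E^d) \cap H^1(G_v, E^d[\varphi_1]) = W_v(\varphi_1, d)$, the Tate annihilator of $W_v(\varphi_1', d)$, makes $c_v$ well-defined on $W_v(\varphi_1', d) \times W_v(\varphi_1', d)$.

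Next I would show $c_v$ descends to a non-degenerate alternating pairing on $\mathcal{L}_v := W_v(\varphi_1', d)/\varphi_1 W_v(\varphi_2, d)$. Vanishing for $v \notin \Vplac$ follows by choosing unramified lifts, and alternatingness is inherited from the alternating Weil pairing underlying $\langle,\rangle_v$. For descent and non-degeneracy, a direct cocycle calculation yields $\pi_1 W_v^{(2)}(E^d) = (\varphi_2^{-1})_* W_v(\varphi_2', d)$, where $\varphi_2^{-1} \colon E_2^d[\varphi_2'] \isoarrow E^d[\varphi_1]$ is the Galois-equivariant isomorphism inverse to $\varphi_2|_{E^d[\varphi_1]}$. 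Combined with the standard local duality $W_v(\varphi_2', d)^{\perp} = W_v(\varphi_2, d)$ for the pairing $H^1 E_2^d[\varphi_2'] \times H^1 E^d[\varphi_2] \to \tfrac{1}{2}\Z/\Z$ and the compatibility of the Cartier and Weil pairings under $\varphi_2^{-1}$, this shows that $\pi_1 W_v^{(2)}(E^d)$ is precisely the annihilator of $W_v(\varphi_2, d)$ in $H^1 E^d[\varphi_1]$ under the Weil--Tate pairing. Transferring via $s$ (so that $c_v$ becomes a Weil--Tate pairing on $sW_v(\varphi_1', d) \times \pi_1 W_v^{(2)}(E^d)$ in $H^1 E^d[\varphi_2] \times H^1 E^d[\varphi_1]$), one identifies the radical of $c_v$ on $W_v(\varphi_1', d)$ with $s^{-1}(W_v(\varphi_2, d)) = \varphi_1 W_v(\varphi_2, d)$, giving both descent to $\mathcal{L}_v$ and non-degeneracy on the quotient.

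Finally, for $d \in X_E(d_0, \infty)$ each of $W_v(\varphi_1, d)$, $W_v(\varphi_2, d)$, $W_v(\varphi_1', d)$, $W_v(\varphi_2', d)$, and $W_v^{(2)}(E^d)$ at $v \in \Vplac$ depends on $d$ only through $d \bmod (\QQ_v^{\times})^2$, which coincides with $d_0 \bmod (\QQ_v^{\times})^2$ by the definition of $X_E$; hence each $\mathcal{L}_v$ and $c_v$ is identical for all such $d$, and $\langle,\rangle := \sum_{v \in \Vplac} c_v$ is the desired non-degenerate alternating pairing on $\mathcal{L}(E_1^{d_0}) = \prod_{v \in \Vplac} \mathcal{L}_v$ that computes $\CTP$ via $\textup{loc}$. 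The main obstacle will be the cocycle calculation establishing $\pi_1 W_v^{(2)}(E^d) = (\varphi_2^{-1})_* W_v(\varphi_2', d)$, and the careful bookkeeping needed to match the Weil and Cartier pairings under the various Galois-equivariant isomorphisms produced by the splitting of $E^d[2]$.
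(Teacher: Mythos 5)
Your local pairing $c_v(\phi,\psi)=\langle\phi_v,\psi_v^{(1)}\rangle_v$ is exactly the paper's pairing $\inv_v\big(s(\phi)\cup(\Psi-s(\psi))\big)$: the section $s$, the splitting of $E^d[2]$, and the use of the local formula from \cite[Section~6.1]{MS21} all match. But there is a genuine gap in your treatment of the alternating property. You assert that ``alternatingness is inherited from the alternating Weil pairing underlying $\langle,\rangle_v$.'' That does not follow: $\langle,\rangle_v$ is a Cartier-duality cup product between the two distinct rank-one modules $E_1^d[\varphi_1']$ and $E^d[\varphi_1]$, for which ``alternating'' is not even meaningful, and the diagonal value $c_v(\phi,\phi)=\inv_v\big(s\phi\cup\phi^{(1)}\big)$ is not forced to vanish by any local computation. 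What one \emph{can} prove locally, as the paper does, is that $c_v$ is \emph{symmetric} --- using the identity $\inv_v\big(s\phi\cup(\Psi-s\psi)\big)=\inv_v\big(s\psi\cup(\Phi-s\phi)\big)$, which rests on $(\Phi-s\phi)$, $(\Psi-s\psi)$ both lying in $H^1(G_v,E^d[\varphi_1])$ and hence cup-producting to zero. You never establish symmetry; without it, your appeal to ``the radical'' of $c_v$ (a single subspace) is premature, since an asymmetric pairing has distinct left and right kernels. The alternating property of the resulting pairing on $\mathcal{L}$ is obtained in the paper by a global argument: Theorem~\ref{thm:2V_II} gives a $d\in X_E(d_0,\infty)$ for which $\textup{loc}$ is surjective onto $\mathcal{L}$, and since the global Cassels--Tate pairing $\CTP$ is already known to be alternating, the pairing on $\mathcal{L}$ must be as well. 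You should replace your local ``inherited alternatingness'' claim with the symmetry argument plus this global step.

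On nondegeneracy, your proposed cocycle calculation identifying $\pi_1 W_v^{(2)}(E^d)$ with the Tate annihilator of $W_v(\varphi_2,d)$ is a reasonable alternative to the paper's route, which simply cites the identification of the kernel of $\langle,\rangle_v$ with $\varphi_1(W_v(\varphi_2,d))$ from the remark after \cite[(6.3)]{MS21}. Your approach is more self-contained but, as you note yourself, requires careful bookkeeping of how the Cartier and Weil pairings transform under the splitting; it would need to be carried out explicitly to be convincing. Your final observation that the local conditions at $v\in\Vplac$ depend only on $d\bmod(\QQ_v^\times)^2$ correctly handles the independence of the pairing from the choice of $d\in X_E(d_0,\infty)$.
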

\begin{proof}
Defining $\varphi_2$ as in Definition \ref{defn:V_setup}, the isogeny $\varphi_1$ defines an isomorphism
\[E^d[\varphi_2] \isoarrow E_1^d[\varphi_1']\]
and the inverse of this map defines a section $s: E_1^d[\varphi_1'] \to E^d[2]$ to the final map in \eqref{eq:VIII_CT}. 

Choose a rational place $v$ and $\phi, \psi$ in $W_v(\varphi_1', d)$. We then may choose
\[\Phi, \Psi \in \ker\left(H^1(G_v, E^d[2]) \to H^1(G_v, E^d)\right)\]
so that $\varphi_1(\Phi) = \phi$ and $\varphi_1(\Psi) = \psi$. We then define
\[\langle \phi ,\psi \rangle_v := \inv_v\left( s(\phi) \cup (\Psi - s(\psi))\right)  = \inv_v\left(s(\psi) \cup (\Phi - s(\phi))\right) \in \tfrac{1}{2}\Z/\Z,\]
where the cup product is with respect to the Weil pairing on $E^d[2]$. That the identity holds follows since the Weil pairing is symmetric and since $\Phi - s(\phi)$ and $\Psi - s(\psi)$ both lie in $H^1(G_v, E[\varphi_1])$, implying
\[(\Phi - s(\phi)) \cup (\Psi - s(\psi)) = 0.\]
From this identity, we see that the pairing $\langle\,\,,\,\,\rangle_v$ is symmetric.

From \cite[Definition 6.2]{MS21}, we see that
\[\CTP(\phi, \psi) =\sum_v \langle \res_{G_v}(\phi),\,\res_{G_v}(\psi) \rangle_v \quad\text{for all }\, \phi, \psi \in \Sel^{\varphi_1'} E_1^d.\]
Furthermore, from the remark after \cite[(6.3)]{MS21}, the left kernel of the pairing $\langle\,\,,\,\,\rangle_v$ is $\varphi_1(W_v(\varphi_2, d))$. This is then its right kernel, and the sum of local pairings defines a nondegenerate symmetric pairing on $\mathcal{L}$. This pairing does not depend on the choice of $d$ in $X_E(d_0, H)$.

From Theorem \ref{thm:2V_II}, we know that $\textup{loc}$ is surjective onto $\mathcal{L}$ for some $d$ in $X_E(d_0, \infty)$. Since the Cassels--Tate pairing is alternating, the pairing on $\mathcal{L}$ must also be alternating.
\end{proof}

Proposition \ref{prop:CTP_V} and Theorem \ref{thm:2V_II} together imply Theorem \ref{thm:2V}, as we show next.
\begin{proof}[Proof of Theorem \ref{thm:2V}]
The space $\mathcal{L} = \mathcal{L}(E_1^d)$ has dimension $u_0$ for $d$ in $X_E(d_0, H)$; it follows that $u_0$ is a nonnegative even integer from Proposition \ref{prop:CTP_V}. Take $V_d$ to be the image of $\Sel^{\varphi'_1} E_1^d$ in $\mathcal{L}$, and take $W_d$ to be the subgroup of $V_d$ orthogonal to $V_d$ under the pairing of Proposition \ref{prop:CTP_V}. We then have an exact sequence
\[0 \to \Sel^{\varphi_1} E^d \to \Sel^2 E^d \to \Sel^{\varphi'_1} E_1^d \to V_d/W_d \to 0,\]
so
\[r_2(E^d) = r_{\varphi_1}(E^d) + r_{\varphi'_1}(E_1^d) - \dim V_d/W_d\]
for all but finitely many $d$ in $X_E(d_0, H)$. So
\[r_2(E^d)  - r_{\varphi_1}(E^d) = r_{\varphi_1}(E^d)  - u_1 - \dim V_d/W_d\]
for all but finitely many such $d$.

If we choose a surjective homomorphism from $\FFF_2^{r_{\varphi_1}(E^d) - u_1}$ to $V_d$, then the right-hand side of this identity equals the dimension of the kernel of the pairing on $\FFF_2^{r_{\varphi_1}(E^d) - u_1}$ pulled back from the nondenerate pairing on $\mathcal{L}$. Call the dimension of this kernel $j_d$.

By Theorem \ref{thm:2V_II}, if we vary among $d$ in $X_E(d_0, H)$ such that $r_{\varphi_1}(E^d) = r_{\varphi_1}$ and let $H$ tend to infinity, the $V_d$ will be distributed like the image of a uniformly selected random homomorphism $\FFF_2^{r_{\varphi_1} - u_1} \to \mathcal{L}$. So, in the same family, we have $j_d = j$ with probability tending to $P^{\text{V}}(j\,|\, (r_{\varphi_1} - u_1) \to u_0)$ for any nonnegative integer $j$. This establishes Theorem \ref{thm:2V}.
\end{proof}

\subsection{Classifying cofavored submodules}
Given an elliptic curve $E/\QQ$, the first step in the method of \cite{Smi22b} to finding the distribution of $2$-Selmer groups in the twist family of $E$ is to classify the cofavored submodules of the powers $E^{\oplus a}$ for all nonnegative integers $a$, which are defined to be the submodules $T$ of $E^{\oplus a}[2]$ so that
\[\#H^0(\langle \sigma \rangle , (E^{\oplus a}/T)[2]) = \# H^0(\langle \sigma \rangle , E^{\oplus a}[2]) \quad\text{for all } \sigma \in G_{\QQ}.\]
If $E$ is in Case IV or V, the power $E^{\oplus a}$ has extra cofavored submodules not seen for curves in Case I for any positive integer $a$, and this corresponds to the nondegenerate isogeny component in the Selmer groups of the twist family. To actually calculate the moments of the sizes of the $2$-Selmer groups, we need to rule out extra cofavored submodules on top of the ones we expect.

To start, we will try to understand cofavored submodules of powers of elliptic curves more generally.

\begin{notat}
\label{notat:RE}
Choose an elliptic curve $E/\QQ$, and fix an identification $E[2^{\infty}] \cong (\QQ_2/\Z_2)^2$. From this identification, we may define a representation
\[\rho_E: G_{\QQ} \to \text{GL}_2(\Z_2)\]
corresponding to the Galois action on $E[2^{\infty}]$. From the Weil pairing, we find
\begin{equation}
\label{eq:det_rho}
\det(\rho_E(\sigma)) \equiv \frac{\sigma(\sqrt{-1})}{\sqrt{-1}}\, \textup{ mod } 4 \quad\text{ for all }\sigma \in G_{\QQ}.
\end{equation}

Take $M_2(\FFF_2)$ to be the ring of $2 \times 2$ matrices over $\FFF_2$, and take $\phi_E: G_{\QQ(E[2])} \to M_2(\FFF_2)$ to be the map defined by
\[\phi_E(\sigma) = \tfrac{1}{2}(\rho_E(\sigma) - 1).\]
We take $R_E$ to be the subring with identity generated by the image of $\phi_E$.

For convenience, we name
\[\alpha = \begin{pmatrix} 0 & 1 \\ 0 & 0 \end{pmatrix}, \quad \beta = \begin{pmatrix} 1 & 0 \\ 0 & 0 \end{pmatrix},\quad\text{and}\quad\text{Id} = \begin{pmatrix} 1 &0 \\ 0 & 1 \end{pmatrix}.\]
\end{notat}

\begin{prop}
Choose an elliptic curve $E/\QQ$ and a positive integer $a$, and fix an identification of $E[2^{\infty}]$ with $(\QQ_2/\Z_2)^2$. Via this identification, we may view $E[2]^{\oplus a}$ as an $M_2(\FFF_2)$ module. Given a $G_{\QQ}$-submodule $T$ of $E[2]^{\oplus a}$, if $T$ is cofavored, then $T$ is closed under the action of $R_E$.
\end{prop}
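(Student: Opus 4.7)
The plan is to exploit the cofavored condition only for $\sigma \in G_{\QQ(E[2])}$, since it is precisely on these elements that $R_E$ is defined via $\phi_E$. A routine snake-lemma computation, applied to the multiplication-by-$2$ endomorphism of the short exact sequence $0 \to T \to E^{\oplus a} \to E^{\oplus a}/T \to 0$ (using $2T = 0$ and the $2$-divisibility of $E^{\oplus a}$), yields
\[0 \to T \to E^{\oplus a}[2] \to (E^{\oplus a}/T)[2] \to T \to 0,\]
so in particular $\#(E^{\oplus a}/T)[2] = \#E^{\oplus a}[2]$. For $\sigma \in G_{\QQ(E[2])}$ the action on $E^{\oplus a}[2]$ is trivial, so $\#E^{\oplus a}[2]^{\sigma} = \#E^{\oplus a}[2]$, and the cofavored hypothesis forces $\sigma$ to act trivially on $N := (E^{\oplus a}/T)[2]$ as well.

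Next I would make this action explicit. An element of $N$ is represented by $x \in E^{\oplus a}[4]$ with $2x \in T$, and $\sigma$ fixes $x+T$ iff $(\sigma - 1)x \in T$. Lifting $\phi_E(\sigma) \in M_2(\FFF_2)$ to $\tilde\phi \in M_2(\Z_2)$, so that $\rho_E(\sigma) = \mathrm{Id} + 2\tilde\phi$, I would compute
\[(\sigma - 1)x \;=\; 2\tilde\phi\cdot x \;=\; \tilde\phi \cdot (2x) \;\in\; E^{\oplus a}[2];\]
since $2x \in E^{\oplus a}[2]$, the product depends only on the mod-$2$ reduction, giving $(\sigma-1)x = \phi_E(\sigma)\cdot(2x)$. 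Hence triviality of the $\sigma$-action on $N$ reads $\phi_E(\sigma)\cdot(2x) \in T$ for every admissible $x$. Since $E^{\oplus a}$ is $2$-divisible, as $x$ varies over elements of $E^{\oplus a}[4]$ with $2x \in T$, the product $2x$ ranges over all of $T$, so we conclude $\phi_E(\sigma)\cdot T \subseteq T$.

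Thus $T$ is preserved by every generator of $R_E$; since $T$ is an abelian subgroup, it is automatically stable under sums and compositions of the generators, and under the identity, hence under the entire subring $R_E$. The main obstacle is purely bookkeeping: one must carefully distinguish the lift $\tilde\phi \in M_2(\Z_2)$ acting on $E[2^{\infty}]$ from its mod-$2$ reduction $\phi_E(\sigma) \in M_2(\FFF_2)$ acting on $E[2]^{\oplus a}$, and verify that the cancellation $2\tilde\phi \cdot x = \tilde\phi \cdot (2x)$ legitimately transfers the $\sigma$-action, originally computed on the lift $x \in E[4]$, down to the $M_2(\FFF_2)$-action on $E[2]^{\oplus a}$ appearing in the statement.
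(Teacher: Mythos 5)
Your proof is correct and follows the same approach as the paper: deduce from the cofavored condition that each $\sigma\in G_{\QQ(E[2])}$ acts trivially on $(E^{\oplus a}/T)[2]$, then conclude $\phi_E(\sigma)$ preserves $T$. You supply two pieces of bookkeeping the paper leaves implicit (the snake-lemma computation giving $\#(E^{\oplus a}/T)[2]=4^a$, and the lift-and-cancel identity $(\sigma-1)x=\phi_E(\sigma)\cdot(2x)$ showing that triviality of the $\sigma$-action translates into $R_E$-stability of $T$), both of which are correct and make the argument self-contained.
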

\begin{proof}
Choose any $\sigma \in G_{\QQ(E[2])}$. Since $T$ is cofavored, we have
\[\# H^0(\langle \sigma \rangle, (E^{\oplus a}/T)[2]) = \# H^0(\langle \sigma \rangle, E^{\oplus a}[2]) = 4^a,\]
so $\sigma$ acts trivially on $(E^{\oplus a}/T)[2]$. This implies that $\phi_E(\sigma)$ maps $T$ into $T$. Applying this for all $\sigma \in G_{\QQ(E[2])}$ then gives the result.
\end{proof}

\begin{prop}
\label{prop:IV_cof}
Suppose $E$ is an elliptic curve in Case IV, take $\varphi: E \to E_0$ to be its balanced isogeny, and take $\varphi'$ to be the dual isogeny to $\varphi$. Choose nonnegative integers $a$ and $b$, and take $T$ to be a submodule of
\[M = E[\varphi]^{\oplus a} \oplus E[2]^{\oplus b}\]
such that the projection $M \to E[\varphi]^{\oplus a}$ restricts to a surjection on $T$, and such that the composition 
\[M \to 0 \oplus E[2]^{\oplus b} \xrightarrow{\,\,\varphi^{\oplus b}\,\,} E_0[\varphi']^{\oplus b}\]
is surjective when restricted to $T$. Then, if $T$ is cofavored, it equals $M$.
\end{prop}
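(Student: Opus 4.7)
The plan is to use the cofavored hypothesis, via the immediately preceding proposition, to deduce that $T$ is closed under the ring $R_E$, and then to show that $R_E$ contains the rank-one nilpotent operator $\alpha$ whose image provides the missing summand $0 \oplus E[\varphi]^{\oplus b}$.

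Fix a basis $P_1, P_2$ of $E[2]$ with $E[\varphi] = \FFF_2 P_1$; in this basis, $\alpha = \begin{pmatrix}0 & 1\\ 0 & 0\end{pmatrix}$. The first and most delicate step is to show that, in Case IV, the ring $R_E$ is contained in the upper triangular subring of $M_2(\FFF_2)$ and contains $\alpha$. The upper triangular containment follows because $E_0$ is also in Case IV (so $E_0(\QQ)[2] = (\Z/2\Z)^2$), hence has three rational $2$-isogenies, and composing $\varphi$ with either of the two non-dual ones produces a $\QQ$-rational cyclic $4$-isogeny from $E$; its kernel is a $G_{\QQ}$-stable cyclic subgroup of $E[4]$ of order $4$ lifting $E[\varphi]$, which is equivalent to the vanishing of the lower-left entry of $\phi_E(\sigma)$ for all $\sigma$. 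For the containment of $\alpha$: enumerate the subrings with identity of $\textup{UpperTri}_2(\FFF_2)$ (there are five), and note that only $\FFF_2[\alpha]$ and the full subring $\textup{UpperTri}_2(\FFF_2)$ fail to preserve one of the other two order-$2$ subgroups of $E[2]$, namely $E[\psi_1]$ and $E[\psi_2]$, where $\psi_1, \psi_2 \colon E \to F_1, F_2$ are the two non-balanced rational $2$-isogenies from $E$. In Case IV, each $F_i$ lies in Case III and admits only one $\QQ$-rational $2$-isogeny, namely the dual $\psi_i' \colon F_i \to E$; since $\psi_i' \circ \psi_i = [2]_E$ is not cyclic, there is no $G_{\QQ}$-stable cyclic order-$4$ subgroup of $E[4]$ above $E[\psi_i]$, so $E[\psi_i]$ is not an $R_E$-submodule. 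Hence $R_E$ must be one of the two subrings containing $\alpha$.

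With $\alpha \in R_E$, the cofavored hypothesis yields $\alpha \cdot T \subseteq T$. The operator $\alpha$ acts as zero on $E[\varphi]$ and sends $E[2]$ onto $E[\varphi]$, so on $M$ it annihilates the $E[\varphi]^{\oplus a}$ summand and maps the $E[2]^{\oplus b}$ summand into $E[\varphi]^{\oplus b}$. Let $T_b$ denote the projection of $T$ to $E[2]^{\oplus b}$; hypothesis (ii) gives $T_b + E[\varphi]^{\oplus b} = E[2]^{\oplus b}$, so
\[
\alpha(T_b) = \alpha\!\left(T_b + E[\varphi]^{\oplus b}\right) = \alpha\!\left(E[2]^{\oplus b}\right) = E[\varphi]^{\oplus b},
\]
since $\alpha$ kills $E[\varphi]$. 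Therefore $\alpha \cdot T = 0 \oplus E[\varphi]^{\oplus b} \subseteq T$. To conclude, observe that hypotheses (i) and (ii) together say that $T$ surjects onto the quotient $M/(0 \oplus E[\varphi]^{\oplus b}) \cong E[\varphi]^{\oplus a} \oplus E_0[\varphi']^{\oplus b}$, so $T + (0 \oplus E[\varphi]^{\oplus b}) = M$. Combining this with the inclusion just established, $T = M$.

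The main obstacle is the classification of $R_E$ in the first paragraph: correctly translating between the existence of $\QQ$-rational cyclic $4$-isogenies from $E$ and the preservation of order-$2$ subgroups of $E[2]$ by $R_E$, and then using the specific structure of the Case IV $2$-isogeny graph (in particular that the non-balanced targets are Case III curves whose only rational $2$-isogeny is the dual) to rule out every subring of upper triangular matrices other than those containing $\alpha$. Once this classification is in place, the remainder of the argument is a short application of the $R_E$-closure together with the two surjectivity hypotheses.
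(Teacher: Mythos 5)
Your overall strategy mirrors the paper's: compute the ring $R_E$, then use $R_E$-closure of $T$ (which is what cofavoredness amounts to here, since $\QQ(E[2]) = \QQ$ in Case IV) to pin down $T$. Your route to the upper-triangular containment and to $\alpha \in R_E$ via the $2$-isogeny graph and the rational cyclic $4$-isogeny $E \to E_0 \to F$ is a nice geometric alternative to the paper's more computational argument. However, there is a genuine gap.

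First, a smaller issue: your isogeny-graph argument pins $R_E$ down only to $\{\FFF_2[\alpha],\, \langle \alpha, \beta, \mathrm{Id}\rangle\}$, and this is the best the $2$-isogeny graph can do. Both rings act identically on order-$2$ subgroups of $E[2]$ (each preserves $E[\varphi]$ and fails to preserve $E[\psi_1]$ and $E[\psi_2]$), so the distinction — whether $\beta \in R_E$ — is invisible to the isogeny graph. You never establish $\beta \in R_E$. The paper does this using the Weil-pairing constraint $\det \rho_E(\sigma) \equiv \sigma(\sqrt{-1})/\sqrt{-1} \pmod{4}$: picking $\tau$ not fixing $\sqrt{-1}$ forces $\tr \phi_E(\tau) = 1$, which rules out $R_E \subseteq \FFF_2[\alpha]$ since every element of $\FFF_2[\alpha]$ has trace $0$.

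Second, and decisively, your final paragraph does not close. You claim that hypotheses (i) and (ii) together give that $T$ surjects onto $M/(0 \oplus E[\varphi]^{\oplus b}) \cong E[\varphi]^{\oplus a} \oplus E_0[\varphi']^{\oplus b}$. They do not: they give surjectivity onto each summand separately, which is weaker. Concretely, take $a = b = 1$ and
\[
T = \{(x, z) \in E[\varphi] \oplus E[2] \,:\, \alpha z = x\}.
\]
This $T$ satisfies (i) and (ii), is $\alpha$-stable, contains $0 \oplus E[\varphi]$, yet has dimension $2 < 3 = \dim M$. So $\alpha$-closure plus the two surjectivity hypotheses genuinely do not suffice. (Of course this $T$ is not $\beta$-stable, which is exactly what saves the proposition; but since you haven't shown $\beta \in R_E$, your argument cannot see this.)

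The fix is what the paper does: show $\beta \in R_E$ as above, then observe that $\beta T \subseteq T \cap E[\varphi]^{\oplus a+b}$ surjects onto $E[\varphi]^{\oplus a}$; combined with $0 \oplus E[\varphi]^{\oplus b} \subseteq T$ this gives $E[\varphi]^{\oplus a+b} \subseteq T$, and then (ii) forces $T = M$.
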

\begin{proof}
Fix an identification $\iota$ of $E[2^{\infty}]$ with $(\QQ_2/\Z_2)^2$ such that the nontrivial point in $E[\varphi]$ maps to $(1/2, 0)$. Since $\varphi$ is balanced, we see that $R_E$ has image in
\[R_{\text{IV}} := \langle \alpha, \beta, \text{Id}\rangle.\]
We claim that this equals $R_E$. To start, we note that $R_E$ contains $\text{Id}$.

First consider the case that $\QQ(E[2]) = \QQ$. Since $\varphi$ is the unique balanced isogeny of $E$, we find that $R_E$ cannot fix the submodules $\langle (0, 1/2) \rangle$ or $\langle (1/2, 1/2)\rangle$. So it contains
\[a_1 \beta + \alpha \quad\text{and}\quad a_2(\alpha + \beta) + \alpha\]
for some $a_1, a_2 \in \FFF_2$. It follows that it contains $\alpha$.

 Choose some $\tau \in G_{\QQ}$ that does not fix $\sqrt{-1}$. As $E$ is in Case IV, $\tau\circ \iota (1/4, 0)$ is a multiple of $\iota(1/4, 0)$.
Together with \eqref{eq:det_rho}, we find that $\phi_E(\tau)$ has the form
\begin{equation}
\label{eq:IV_Weil}
\begin{pmatrix} a & b \\ 0 & 1 + a\end{pmatrix},
\end{equation}
for some $a, b \in \FFF_2$, and the claim that $R_E = R_{\text{IV}}$ follows.

Now suppose that $\QQ(E[2]) \ne \QQ$. In this case, there are integers $a, b$ such that
\[\QQ(E[2]) = \QQ(\sqrt{a^2 - 4b}) \quad\text{and}\quad \QQ(E_0[2]) = \QQ(\sqrt{b});\]
see \cite[Example 1.2]{Smi22b}.  So this field cannot equal $\QQ(\sqrt{-1})$, and we find that $R_E$ contains an element of the form \eqref{eq:IV_Weil}.

Now choose $\sigma$ acting nontrivially on $E[2]$ and not fixing $\sqrt{-1}$, so $\sigma(\iota(0, 1/2))$ is $\iota(1/2, 1/2)$. It also acts nontrivially on $E_0[2]$, so $\sigma(\iota(1/4, 0))$ is either $\iota(1/4, 1/2)$ or $\iota(3/4, 1/2)$.  So
\[\rho_E(\sigma) = \begin{pmatrix} 1+2a & 1 + 2b \\ 2 + 4c & 1+2d \end{pmatrix}\]
for some $2$-adic integers $a, b, c, d$. From \eqref{eq:det_rho}, we see that $a$ equals $d$ mod $2$.

Considered mod $4$, $\rho_E(\sigma^2)$ has the form
\[\begin{pmatrix} 3  & 2 \\ 0 & 3 \end{pmatrix},\]
so $R_E$ contains
\[\begin{pmatrix} 1 & 1 \\ 0 & 1\end{pmatrix}.\]
This is enough to prove $R_E = R_{\text{IV}}$.

Now consider $T$ as in the proposition statement. It is a $R_{\text{IV}}$ submodule of $M$. We note that
\[\alpha T = 0 \oplus E[\varphi]^{\oplus b}\]
since $T$ surjects onto $E_0[\varphi']^{\oplus b}$. Similarly, we see that 
\[\beta T\subseteq T\cap E[\varphi]^{\oplus a + b}\]
 surjects onto $E[\varphi]^{\oplus a}$. So $T$ must contain $E[\varphi]^{\oplus a + b}$. Again using the fact that it surjects onto $E_0[\varphi']^{\oplus b}$, we may conclude $T = M$.
\end{proof}

\begin{prop}
\label{prop:V_cof}
Suppose $E$ is an elliptic curve in Case V, and take $\varphi_i: E \to E_i$ for $i = 1,2$ to be its balanced isogenies. Take $\varphi'_i$ to be the dual isogeny to $\varphi_i$.

Choose nonnegative integers $a$, $b$, and $c$, and take $T$ to be a submodule of
\[M  = E[\varphi_1]^{\oplus a} \oplus E[\varphi_2]^{\oplus b} \oplus E[2]^{\oplus c}.\]
We assume that the compositions
\[M \to E[\varphi_1]^{\oplus a} \oplus E[2]^{\oplus c} \xrightarrow{\,\,\varphi_2\,\,} E_2[\varphi'_2]^{\oplus a + c}\quad\text{and}\quad M \to E[\varphi_2]^{\oplus b} \oplus E[2]^{\oplus c}  \xrightarrow{\,\,\varphi_1\,\,} E_1[\varphi'_1]^{\oplus b +c}\]
remain surjective when restricted to $T$. Then, if $T$ is cofavored, it equals $M$.
\end{prop}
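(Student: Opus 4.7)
The plan is to mirror the three-step structure of the proof of Proposition \ref{prop:IV_cof}: first, pin down the ring $R_E \subseteq M_2(\FFF_2)$ in Case V; second, invoke the proposition preceding Proposition \ref{prop:IV_cof} to conclude that $T$ is closed under $R_E$; third, exploit the idempotent structure of $R_E$ together with the two surjectivity hypotheses to force $T = M$.

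For step one, fix an identification $E[2^\infty] \isoarrow (\QQ_2/\Z_2)^2$ so that $\ker\varphi_1 = \langle (1/2, 0)\rangle$ and $\ker\varphi_2 = \langle (0, 1/2)\rangle$. Because $E$ is in Case V, $\QQ(E[2]) = \QQ$ and $E$ carries exactly one further rational $2$-isogeny $\varphi_3: E \to E_3$, whose kernel is $\langle (1/2, 1/2)\rangle$; this $\varphi_3$ is not balanced (Case V has exactly two balanced isogenies). The same local computations used in the proof of Proposition \ref{prop:IV_cof} show that the balanced hypotheses on $\varphi_1$ and $\varphi_2$ force, respectively, the lower-left and upper-right entries of $\phi_E(\sigma)$ to vanish for every $\sigma \in G_\QQ$; hence each $\phi_E(\sigma)$ is diagonal. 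A parallel computation shows that a diagonal matrix over $\FFF_2$ preserves $\ker\varphi_3$ if and only if it is a scalar, so the non-balanced status of $\varphi_3$ yields some $\sigma$ with $\phi_E(\sigma)$ equal to $\beta$ or to $\text{Id} + \beta$. Combined with $\text{Id} \in R_E$, this gives $R_E = \{0, \text{Id}, \beta, \gamma\}$, the full algebra of diagonal matrices, where I write $\gamma := \text{Id} + \beta$.

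Given $R_E$-closure of $T$, the orthogonal idempotents $\beta$ and $\gamma$ (satisfying $\beta\gamma = 0$ and $\beta + \gamma = \text{Id}$) produce the decomposition $T = \beta T \oplus \gamma T$ inside $M = \beta M \oplus \gamma M$, where a direct calculation gives
\[\beta M = E[\varphi_1]^{\oplus a} \oplus 0 \oplus E[\varphi_1]^{\oplus c}, \qquad \gamma M = 0 \oplus E[\varphi_2]^{\oplus b} \oplus E[\varphi_2]^{\oplus c}.\]
Now invoke the $\varphi_2$-surjectivity hypothesis: the composition $\varphi_2 \circ \text{pr}: M \to E_2[\varphi_2']^{\oplus a+c}$ kills $\gamma M$ slot-by-slot (since $\gamma M$ lies in $\ker\varphi_2$) and restricts to an isomorphism on $\beta M$ (using $E[\varphi_1] \cap E[\varphi_2] = 0$ for injectivity and matching cardinalities $2^{a+c}$ for bijectivity). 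Hence the hypothesized surjection $T \twoheadrightarrow E_2[\varphi_2']^{\oplus a+c}$ factors through $\beta T \subseteq \beta M$ and forces $\beta T = \beta M$. The symmetric argument using the $\varphi_1$-surjectivity gives $\gamma T = \gamma M$, and together these yield $T = M$.

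The main delicate point is step one: one must rule out the degenerate possibility $R_E = \{0, \text{Id}\}$, under which no useful idempotent splitting of $M$ would be available. This is precisely where the guaranteed presence of the non-balanced third isogeny $\varphi_3$ in Case V enters the argument, by forcing a non-scalar diagonal element into $R_E$; this is the main new ingredient relative to the Case IV proof.
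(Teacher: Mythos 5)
Your proof is correct, and it reaches the same conclusion via a genuinely different route for the pivotal step of showing $R_E = R_{\textup{V}}$. Your step three (the idempotent splitting $M = \beta M \oplus \gamma M$ and the argument that each surjectivity hypothesis forces one piece of $T$ to be full) is essentially the same as the paper's, just phrased with a bit more scaffolding. The real divergence is in step one: the paper observes, via \eqref{eq:det_rho}, that any $\tau \in G_{\QQ}$ not fixing $\sqrt{-1}$ has $\phi_E(\tau) \in \{\beta, \textup{Id} + \beta\}$, which directly produces the needed non-scalar. You instead use that the third rational $2$-isogeny $\varphi_3$ (with kernel $\langle(1/2,1/2)\rangle$) is not balanced, and that a diagonal element of $M_2(\FFF_2)$ fixes $\langle(1/2,1/2)\rangle$ only if it is a scalar.

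Both routes work, and yours is arguably a nice "isogeny-graph" reformulation. One thing worth noting, though: your argument hinges on the statement "Case V has exactly two balanced isogenies," which you treat as definitional. The underlying mathematical reason why $\varphi_3$ cannot also be balanced when $\varphi_1, \varphi_2$ are --- equivalently, why $R_E$ cannot be the scalars --- is precisely the Weil pairing constraint encoded in \eqref{eq:det_rho}. So your route is not independent of that constraint; it is the same fact, routed through the a priori knowledge that the case division in the paper is exclusive. The paper's version has the advantage of deducing the non-scalar element from first principles rather than from the taxonomy. If you want your argument to stand without that appeal to the taxonomy, you would need to insert the determinant computation explicitly, at which point the two proofs merge. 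As written, your proof is valid given the paper's framework, and the parenthetical "(Case V has exactly two balanced isogenies)" is doing more work than it may appear to.
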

\begin{proof}
Fix an identification $\iota$ of $E[2^{\infty}]$ with $(\QQ_2/\Z_2)^2$ such that the nontrivial point in $E[\varphi_1]$ maps to $(1/2, 0)$ and the nontrivial point in $E[\varphi_2]$ maps to $(0, 1/2)$. With this identification, we see that $R_E$ is contained in 
\[R_{\text{V}} := \langle\text{Id}, \beta\rangle.\]
From \eqref{eq:det_rho}, we see that $\rho_E(\tau)$ is either $\beta$ or $\text{Id} + \beta$ for any $\tau \in G_{\QQ}$ not fixing $\sqrt{-1}$. So $R_E = R_{\text{V}}$.

Now consider $T$ as in the proposition statement. It must be an $R_{\text{V}}$ module. Since it surjects onto $E_2[\varphi'_2]^{\oplus a+c}$, we see that
\[\beta T = E[\varphi_1]^{\oplus a} \oplus 0 \oplus E[\varphi_1]^{\oplus c}.\]
Similarly,
\[(1 + \beta) T = 0 \oplus E[\varphi_2]^{\oplus b} \oplus E[\varphi_2]^{\oplus c},\]
and the result follows.
\end{proof}

Our final result will be needed to classify non-cancellable pairings in the calculation of moments, as defined in \cite[Section 7]{Smi22b}.
\begin{prop}
\label{prop:non-canc}
Suppose $E$ is an elliptic curve in Case IV, and take $\varphi: E \to E_0$ to be its balanced isogeny. View $E[2]$ as an $R_E$ module as in Notation \ref{notat:RE}.

Then the only nonzero $R_E$-equivariant homomorphism
\[\Gamma:E[2] \to E[2]\]
is the identity; the only nonzero $R_E$-equivariant homomorphism
\[\Gamma: E[2] \to E[2]/E[\varphi]\]
is the standard projection; the only nonzero $R_E$-equivariant homomorphism
\[\Gamma: E[\varphi] \to E[2]\]
is the standard inclusion; and there is no nonzero $R_E$-equivariant homomorphism
\[\Gamma: E[\varphi] \to E[2]/E[\varphi].\]
\end{prop}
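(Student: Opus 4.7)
The plan is to exploit the explicit description of $R_E$ obtained in the proof of Proposition \ref{prop:IV_cof}. There, using that $\varphi$ is the unique balanced isogeny of $E$ (with the identification sending the nontrivial point of $E[\varphi]$ to $(1/2, 0)$), we established $R_E = R_{\textup{IV}} = \langle \alpha, \beta, \textup{Id}\rangle$. So an $R_E$-equivariant homomorphism is the same as an $\FFF_2$-linear map commuting with both $\alpha$ and $\beta$, and I will verify each of the four statements by writing down a generic such map in matrix form relative to the basis $e_1, e_2$ of $E[2]$ (with $E[\varphi] = \FFF_2 e_1$) and imposing these two commutation relations.

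For the first statement, write $\Gamma(e_1) = ae_1 + be_2$, $\Gamma(e_2) = ce_1 + de_2$. Commuting with $\beta$ (which is projection onto $\FFF_2 e_1$) forces $b = c = 0$; commuting with $\alpha$ (which sends $e_2 \mapsto e_1$ and kills $e_1$) forces $a = d$. A nonzero such $\Gamma$ must therefore be the identity. For the second statement, the images lie in the one-dimensional quotient $E[2]/E[\varphi] = \FFF_2 \bar{e}_2$, on which both $\alpha$ and $\beta$ act as zero; the $\beta$-equivariance imposes no new condition but the $\alpha$-equivariance applied to $e_2$ forces $\Gamma(e_1) = 0$, so the only nonzero option is the standard projection. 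For the third and fourth statements, one does the same calculation on the one-dimensional source $E[\varphi] = \FFF_2 e_1$, on which $\alpha$ acts as zero and $\beta$ acts as the identity: into $E[2]$, the $\alpha$-equivariance forces the image of $e_1$ to lie in $\ker \alpha = E[\varphi]$, yielding only the standard inclusion; into $E[2]/E[\varphi]$, the $\beta$-equivariance forces $\Gamma(e_1)$ to be fixed by $\beta$, but $\beta$ acts as $0$ on the quotient while acting as the identity on the source, so $\Gamma(e_1) = 0$.

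None of the steps are substantive obstacles once $R_E = R_{\textup{IV}}$ is in hand; the only subtlety is making sure to invoke the conclusion of the earlier computation (rather than redoing it), and being careful that the basis identification $E[\varphi] = \FFF_2 e_1$ is consistent with the one used in the proof of Proposition \ref{prop:IV_cof}. The argument is a short case-by-case matrix verification, so the writeup can just record the four linear algebra computations in sequence.
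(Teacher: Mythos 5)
Your proof is correct and takes essentially the same approach as the paper: both invoke $R_E = R_{\textup{IV}} = \langle \alpha, \beta, \textup{Id}\rangle$ from the proof of Proposition \ref{prop:IV_cof} and then check commutation with $\alpha$ and $\beta$ by direct linear algebra. The only organizational difference is that the paper establishes the fourth claim first and deduces the second and third from it, whereas you verify all four cases directly; this is immaterial.
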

\begin{proof}
As proved in Proposition \ref{prop:IV_cof}, $R_E$ contains $\alpha$ and $\beta$.

For the first claim, we calculate
\begin{align*}
&\beta\cdot  \begin{pmatrix} a & b \\ c& d \end{pmatrix} = \begin{pmatrix} a & b \\ 0 & 0 \end{pmatrix},\quad  \begin{pmatrix} a & b \\ c& d \end{pmatrix}\cdot \beta = \begin{pmatrix} a & 0 \\ c & 0 \end{pmatrix},\\
&  \alpha \cdot \begin{pmatrix} a& b \\ c& d \end{pmatrix} = \begin{pmatrix} c & d \\ 0 & 0 \end{pmatrix},\quad \begin{pmatrix} a & b \\ c & d \end{pmatrix}\cdot \alpha = \begin{pmatrix} 0 & a \\ 0 & c \end{pmatrix}
\end{align*}
for $a, b, c, d \in \FFF_2$. For $\begin{pmatrix} a & b \\ c& d \end{pmatrix}$ to commute with $\alpha$ and $\beta$, we thus need $b = c = 0$ and $a =d$. So the only $R_E$ equivariant maps $\Gamma: E[2] \to E[2]$ are multiples of the identity.

For the final claim, we note that $\beta$ acts as the identity on $E[\varphi]$ but is zero on $E[2]/E[\varphi]$, so the only homomorphism $\Gamma: E[\varphi] \to E[2]/E[\varphi]$ that commutes with $\beta$ is the zero map. The second and third claims then follow directly from the fourth claim. For the second claim, we see that the restriction of $\Gamma$ to $E[\varphi]$ must be trivial, so $\Gamma$ factors through the projection to $E[2]/E[\varphi]$; similarly, for the third claim, we see the composition of $\Gamma$ with the projection to $E[2]/E[\varphi]$ is trivial, so $\Gamma$ has image in $E[\varphi]$.
\end{proof}

\section{Moments of $2$-Selmer groups in grids of quadratic twists}
\label{sec:grids}
The goal of this section and the next is to determine the distribution of $2$-Selmer groups in the quadratic twist family of an elliptic curve in Case IV or V. From the work in the previous section, we know that it suffices to prove Theorems \ref{thm:balanced}, \ref{thm:2IV}, and \ref{thm:2V_II}.

These calculations start with the calculation of a corresponding set of refined $2$-Selmer moments. We will approach this work as in \cite{Smi22b}.

\begin{defn}
\label{defn:Sab}
Take $E$ to be an elliptic curve in Case IV, and take $\varphi: E \to E_0$ to be its balanced isogeny. Take $\Vplac_0$ to be a finite set of rational places containing the places of bad reduction for $E$ together with $2$ and $\infty$.

Choose a nonzero integer $d_0$. Given $d$ in $X_E(d_0, \infty)$, we will be interested in two sets of tuples of Selmer elements for the twist $E^d$.

First, given nonnegative integers $a, b$, we take $\mathcal{S}_{a, b}(E^{d})$ to be the set of tuples
\[(\phi_1, \dots, \phi_{a+b}) \in \left(\Sel^{\varphi} E^d\right)^{\oplus a} \oplus \left(\Sel^2 E^d\right)^{\oplus b}\]
such that $\phi_1, \dots, \phi_a$ have linearly independent images in $\Sel^{2^{\infty}} E^d$, and such that $\varphi(\phi_{a+1})$, $\dots$, $\varphi(\phi_{a+b})$ have linearly independent images in $\Sel^{2^{\infty}} E_0^d$.

Second, choose a nonnegative integer $a$, and choose a subspace
\[V \subseteq \prod_{v \in \Vplac_0}W_v(\varphi, d_0)^{\oplus a}.\]
Then we take $\mathcal{S}_{a, V}(E^d)$ to be the set of tuples $(\phi_1, \dots, \phi_a)$ in $\mathcal{S}_{a, 0}(E^d)$ such that $(\phi_1, \dots, \phi_a)$ localizes to an element in the subspace $V$.
\end{defn}

\begin{thm}
\label{thm:moments}
Fix $\varphi: E \to E_0$, $\Vplac_0$, and $d_0$ as in Definition \ref{defn:Sab}. Take $u = u(\varphi, d_0)$.

Choose nonnegative integers $a, b$. Then
\[\lim_{H \to \infty} \frac{1}{\# X_E(d_0, H)} \sum_{d \in X_E(d_0, H)} \# \mathcal{S}_{a, b}(E^d) = \#\left(H^0(G_{\QQ}, E[2])\right)^b \cdot 2^{a + a u + ab + \frac{1}{2}b(b+1)}.\]
Now choose a nonnegative integer $a$ and a subspace $V$ of $\prod_{v \in \Vplac_0}W_v(\varphi, d_0)^{\oplus a}$. Take $k$ to be the codimension of this subspace. Then
\[\lim_{H \to \infty} \frac{1}{\# X_E(d_0, H)} \sum_{d \in X_E(d_0, H)} \# \mathcal{S}_{a, V}(E^d) = 2^{a + au - k}.\]
\end{thm}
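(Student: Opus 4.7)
The plan is to apply the refined moment machinery of \cite{Smi22b} to suitable twistable modules built from $E[\varphi]$ and $E[2]$, and then use the cofavored submodule classifications of Section~\ref{sec:IVV} to pin down the dominant contribution.

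For the first moment, I would package the data into the twistable module $N = E[\varphi]^{\oplus a} \oplus E[2]^{\oplus b}$ with the natural direct sum Selmer structure, so that $\Sel N^d = (\Sel^{\varphi} E^d)^{a} \oplus (\Sel^2 E^d)^{b}$. The main moment theorem of \cite{Smi22b} then expresses
\[
\lim_{H \to \infty} \frac{1}{\# X_E(d_0, H)} \sum_{d \in X_E(d_0, H)} \# \Sel N^d
\]
as a finite sum indexed by cofavored $G_{\QQ}$-submodules $T \subseteq N$, with explicit global and local weights. I would use inclusion–exclusion on the direct sum decomposition of $N$ to convert the independence conditions defining $\mathcal{S}_{a,b}(E^d)$ into a restriction on which $T$'s contribute: independence of $\phi_1, \dots, \phi_a$ in $\Sel^{2^{\infty}} E^d$ forces $T$ to project surjectively onto the $E[\varphi]^{\oplus a}$ factor, while independence of $\varphi(\phi_{a+1}), \dots, \varphi(\phi_{a+b})$ in $\Sel^{2^{\infty}} E_0^d$ forces the composition $T \hookrightarrow N \twoheadrightarrow E[2]^{\oplus b} \xrightarrow{\varphi^{\oplus b}} E_0[\varphi']^{\oplus b}$ to be surjective.

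By Proposition \ref{prop:IV_cof}, the only cofavored submodule satisfying both surjectivity requirements is $T = N$ itself, so the limit collapses to a single weight. Evaluating this weight gives the expected factors: the global contribution $\# H^0(G_{\QQ}, N) = 2^{a} \cdot \# H^0(G_{\QQ}, E[2])^{b}$ (using that the nontrivial point of $E[\varphi]$ is rational because $\varphi$ is balanced), the Tamagawa ratio of $\varphi^{\oplus a}$ contributes $2^{au}$ via \eqref{eq:Tama}, and Tate local duality on the cross and diagonal blocks of $N$ produces the combinatorial factors $2^{ab}$ and $2^{b(b+1)/2}$ coming from the interactions of $\varphi$-Selmer with $2$-Selmer and from the symmetric pairing on $E[2]^{\oplus b}$. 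For the second moment, I would apply the same procedure to $N = E[\varphi]^{\oplus a}$ with the local Selmer structure at $\Vplac_0$ restricted to the subspace $V$; only $T = N$ contributes by Proposition \ref{prop:IV_cof}, and imposing the codimension-$k$ restriction multiplies the local factor by $2^{-k}$, yielding $2^{a + au - k}$.

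The principal obstacle is invoking the Smith machinery for the non-self-dual module $E[\varphi]^{\oplus a}$: the framework of \cite{Smi22b} is written for modules with alternating structure. I would circumvent this by embedding into the self-dual $\bigl(E[\varphi] \oplus E_0[\varphi']\bigr)^{\oplus a} \oplus E[2]^{\oplus b}$ and extracting the desired factor. Proposition \ref{prop:non-canc}, which classifies the $R_E$-equivariant homomorphisms among $E[\varphi]$, $E[2]$, and $E[2]/E[\varphi]$, is exactly what is needed to show that the additional ``non-cancellable pairings'' produced by this embedding do not generate spurious cofavored submodules; concretely, every such equivariant map factors through the inclusions and projections already present in $N$, so the inclusion–exclusion of the previous paragraph still forces $T = N$. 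Once this is verified, the arithmetic bookkeeping that converts the single weight $m(N)$ into the closed forms stated in Theorem \ref{thm:moments} is a direct computation of local Tate terms and follows the template of the analogous calculations in \cite[Section 7]{Smi22b}.
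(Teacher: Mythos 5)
Your high-level plan correctly identifies the module $Q = E[\varphi]^{\oplus a} \oplus E[2]^{\oplus b}$ and the two auxiliary propositions, but several steps are confused or missing.

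First, you misattribute the role of Proposition~\ref{prop:non-canc}. It is not used to prevent ``spurious cofavored submodules''; that job is done entirely by Proposition~\ref{prop:IV_cof} (together with Lemma~\ref{lem:ram_sub}, which you do not cite but which is what actually converts membership in $\mathcal{S}_{a,b}$ into the two surjectivity conditions on the ramification subspace, ruling out lower-order twists along the way). In the paper's proof, Proposition~\ref{prop:non-canc} plays a different and later role: after restricting to elements with ramification subspace equal to $Q$, one is summing over non-cancellable $G_{\QQ}$-equivariant alternating maps $\Gamma\colon Q \to (E[2]/E[\varphi])^{\oplus a} \oplus E[2]^{\oplus b}$, and Proposition~\ref{prop:non-canc} shows that exactly $2^{ab + \frac{1}{2}b(b+1)}$ such $\Gamma$ exist, each contributing trivially. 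The combinatorial factor comes from \emph{counting pairings}, not from ``Tate local duality on the cross and diagonal blocks.'' Conflating the two steps hides the actual mechanism producing the moment.

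Second, the self-dual embedding into $(E[\varphi] \oplus E_0[\varphi'])^{\oplus a} \oplus E[2]^{\oplus b}$ is not the paper's approach and is not needed. Proposition~\ref{prop:moments} works directly with $Q$ (and $Q^{\vee}$ via Wiles' formula) at the level of a good grid. Your proposal would require redoing the cofavored/non-cancellable classification for the larger module and then extracting the desired sub-moment, and you give no argument for how that extraction would go; in fact the enlarged module admits more cofavored submodules and more equivariant pairings than $Q$, so Propositions~\ref{prop:IV_cof} and~\ref{prop:non-canc} would no longer apply verbatim. Finally, your proposal treats the moment over $X_E(d_0, H)$ as a single application of a black-box moment theorem, but the paper's argument is a two-layer one: it first establishes the uniform grid estimate of Proposition~\ref{prop:moments}, then decomposes $X_E(d_0, H)$ into good grids plus a negligible bad set via \cite[Proposition 9.4]{Smi22b}. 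Without that decomposition, there is no way to invoke the character-sum machinery of \cite{Smi22b} at all.
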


Following \cite{Smi22b}, we prove this theorem by proving a variant of it on \emph{grids of twists}, as defined in \cite[Section 8]{Smi22b}.
\begin{notat}
\label{notat:grid}
Fix $\varphi: E \to E_0$ and $\Vplac_0$ as in Definition \ref{defn:Sab}. Take $L$ to be the maximal abelian extension of $\QQ(E[2])$ of exponent $2$ that is unramified at all primes outside $\Vplac_0$. We will assume that $\Vplac_0$ is large enough that $(\QQ(E[2])/\QQ, \Vplac_0, \pm 1)$ is an unpacked starting tuple, in the notation of \cite[Section 3]{Smi22a}.

Fix a squarefree integer $d_0$ indivisible by any prime outside $\Vplac_0$. Choose a finite set $S$, and for each $s \in S$, take $X_s$ to be a finite collection of primes. We assume the sets $X_s$ are all disjoint, and we take $X = \prod_{s \in S} X_s$. Given $x \in X$, we then define an integer
\[\chi(x) = d_0 \cdot \prod_{s \in S} \pi_s(x),\]
where $\pi_{s}(x)$ is the $s^{th}$-component of $x$.

Choosing some $H > 0$, we will assume that $X$ obeys the conditions of \cite[Definition 8.3 and Notation 8.4]{Smi22b} to be a good grid of filtered ideals of height $H$ with respect to $(\QQ(E[2]), \Vplac_0, \pm1)$. The definition of a good grid of filtered ideals has many parts; among the most salient properties are the following.
\begin{enumerate}
\item For all $x \in X$, $\chi(x)/d_0$ is at most $H$.
\item For every $s \in S$, the conjugacy class of the Frobenius element $\Frob\, p$ in $\Gal(L/\QQ)$ does not depend on the choice of $p$ in $X_s$.
\item For every conjugacy class in $\Gal(L/\QQ)$, there is an $s \in S$ so the Frobenius elements of the primes in $X_s$ land in the class.
\end{enumerate}
For any $x, x_0 \in X$, we find that $\chi(x)$ lies in $X_E(\chi(x_0), \infty)$. In particular, $u(\varphi, \chi(x))$ does not depend on the choice of $x$.
\end{notat}

\begin{prop}
\label{prop:moments}
Fix $\varphi: E \to E_0$ and $\Vplac_0$ as in Definition \ref{defn:Sab}. From this data, fix a good grid $X$ of height $H > 20$ as in Notation \ref{notat:grid}. Choose $x_0 \in X$; taking $\varphi$ to be the balanced isogeny on $E$, take $u = u(\varphi, d_0)$. Then, given nonnegative integers $a, b$ satisfying $a + b \le (\log \log H)^{1/9}$, we have
\[\frac{1}{\# X} \sum_{x \in X}\#\mathcal{S}_{a,b}(E^{\chi(x)}) = A + O \left( \exp\left(-(\log \log H)^{1/10}\right)\right),\]
where
\[A = \# H^0(G_{\QQ}, E[2])^b \cdot 2^{a + au + ab  + \tfrac{1}{2}b(b+1)}.\]
Furthermore, given a nonnegative integer $a \le (\log \log H)^{1/9}$ and a subspace $V$ of $\prod_{v \in \Vplac_0} W_v(\varphi, \chi(x_0))^{\oplus a}$, if we take $k$ to be the codimension of the subspace $V$, we have
\[\frac{1}{\# X} \sum_{x \in X}\#\mathcal{S}_{a,V}(E^{\chi(x)}) = 2^{a + au - k} + O \left( \exp\left(-(\log \log H)^{1/10}\right)\right).\]
Here, the implicit constants depend just on $E$ and $\Vplac_0$.
\end{prop}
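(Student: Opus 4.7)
The plan is to apply the moment machinery for grids of quadratic twists developed in \cite[Section 8]{Smi22b} to the Galois module $M_{a,b} = E[\varphi]^{\oplus a} \oplus E[2]^{\oplus b}$, equipped with the local conditions $W_v(\varphi, d_0)^{\oplus a} \oplus W_v(2, d_0)^{\oplus b}$ at $v \in \Vplac_0$ and the unramified condition at the grid primes. With these choices, a Selmer class of $M_{a,b}^{\chi(x)}$ is precisely a tuple $(\phi_1, \dots, \phi_{a+b}) \in (\Sel^{\varphi} E^{\chi(x)})^{\oplus a} \oplus (\Sel^2 E^{\chi(x)})^{\oplus b}$, and the set $\mathcal{S}_{a,b}(E^{\chi(x)})$ is cut out from this by the additional open surjectivity conditions of Definition \ref{defn:Sab}. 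The proposition then reduces to a moment estimate for $\#X^{-1}\sum_{x}\#\Sel\,M_{a,b}^{\chi(x)}$, combined with an inclusion-exclusion passing to the surjectivity-restricted count.

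First I would invoke the main moment theorem of \cite[Section 8]{Smi22b} for $M_{a,b}$, which expresses the average $\#X^{-1}\sum_{x} \#\Sel\,M_{a,b}^{\chi(x)}$ as a sum indexed by cofavored $G_{\QQ}$-submodules $T \subseteq M_{a,b}$, each weighted by an explicit product of local factors and a global $H^0$-correction, with error of shape $\exp(-(\log\log H)^{1/10})$ coming from the effective Chebotarev bounds built into the definition of a good grid. The open surjectivity conditions then restrict attention to cofavored $T$ satisfying the surjectivity hypothesis of Proposition \ref{prop:IV_cof}; that proposition forces $T = M_{a,b}$, so only the ``full'' cofavored term contributes to the main term. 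Since $\varphi$ is balanced, the local factors at $v \in \Vplac_0$ together with the global Greenberg--Wiles identity assemble into $2^{au}$; the $2$-block contributes $\#H^0(G_\QQ, E[2])^b \cdot 2^{\tfrac{1}{2}b(b+1)}$ via the usual symmetric pairing computation; the cross interaction between the $\varphi$-block and the $2$-block gives $2^{ab}$, the factor $2$ per cross pair being guaranteed by Proposition \ref{prop:non-canc} (the only cofavored morphisms between $E[\varphi]$ and $E[2]$ are the standard inclusion and projection); and the remaining $2^a$ arises from $H^0(G_\QQ, E[\varphi])^a$ in the $\varphi$-block Greenberg--Wiles identity.

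The case of $\mathcal{S}_{a,V}$ is essentially identical, except that at each $v \in \Vplac_0$ we replace the full local-conditions product $\prod_v W_v(\varphi, d_0)^{\oplus a}$ by the codimension-$k$ subspace $V$; this uniformly reduces the main term by the factor $2^{-k}$ while preserving the balanced-isogeny Tamagawa contribution $2^{a + au}$. The main obstacle is the cofavoredness bookkeeping: one must verify that no ``parasitic'' cofavored submodule of $M_{a,b}$ survives the surjectivity restriction, and that no extra cross pairings between the two blocks contribute to the main term beyond the canonical ones. Propositions \ref{prop:IV_cof} and \ref{prop:non-canc} are designed precisely to handle these two points, so once the grid-of-twists framework from \cite{Smi22b} is in place, the enumeration of cofavored contributions is manageable and the quantitative error bound follows directly from the effective Chebotarev estimates underlying the notion of a good grid.
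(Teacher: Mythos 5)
Your proposal follows essentially the same path as the paper: parameterize $\Sel\,M_{a,b}^{\chi(x)}$ via the grid machinery of \cite{Smi22b}, use Proposition \ref{prop:IV_cof} to rule out parasitic cofavored submodules, use Proposition \ref{prop:non-canc} to classify non-cancellable cross pairings, and assemble the Greenberg--Wiles and local factors into the claimed constant. That is exactly what the paper does.

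One step is glossed over, though, and it is worth flagging because it is where the good-grid structure enters in an essential way. You describe the defining conditions of $\mathcal{S}_{a,b}$ as ``open surjectivity conditions of Definition \ref{defn:Sab}.'' But Definition \ref{defn:Sab} states \emph{linear-independence} conditions on images in $\Sel^{2^\infty}E^d$ and $\Sel^{2^\infty}E_0^d$, not surjectivity conditions on the ramification subspace. Translating the former into the latter is precisely the content of Lemma \ref{lem:ram_sub}, and the nontrivial direction of that lemma is not formal: if the ramification subspace $T(\Phi)$ failed to surject, one produces a nonzero class $\psi \in H^1(G_\QQ, E[\varphi])$ unramified outside $\Vplac_0$ and trivial at all the grid primes, and then uses the third property of a good grid (that every Frobenius conjugacy class in $\Gal(L/\QQ)$ is represented among the index sets $X_s$) to force $\psi$ to restrict trivially to $G_{\QQ(E[2])}$, which gives a contradiction. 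Without this bridge, Proposition \ref{prop:IV_cof} doesn't directly apply to $\mathcal{S}_{a,b}$; so this reformulation is a genuine lemma in the argument, not a bookkeeping remark. The rest of your outline---reduction to the full cofavored submodule, counting non-cancellable $\Gamma$, and the reduced local-conditions variant for $\mathcal{S}_{a,V}$---matches the paper.
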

The machinery to prove this proposition is largely developed in Sections 7 and 9 of \cite{Smi22b}. To start, we need to reinterpret the condition of lying in $\mathcal{S}_{a, b}(E^{\chi(x)})$ using the terminology of that paper.
\begin{defn}[{\cite[Definition 5.2]{Smi22b}}]
Choose $x = (x_s)_s \in X$. For $s \in S$, take $\sigma_s$ to be a topological generator for the tame part of the inertia group of some prime of $\ovQQ$ over $x_s$. Given $\Phi$ in $\Sel^2 (E^{\chi(x)})^{\oplus m}$, we define the ramification subspace $T(\Phi)$ of $\Phi$ to be the $G_{\QQ}$-submodule of $E[2]^{\oplus m}$ generated by all elements of the form
\[\tau \Phi(\sigma_s) - \Phi(\sigma_t) \quad\text{with}\quad s, t \in S,\,\,\, \tau \in G_{\QQ}.\]
\end{defn}
\begin{lem}
\label{lem:ram_sub}
Given $a, b \ge 0$, given $x = (x_s)_s$ in $X$, and given $\Phi = (\phi_1, \dots, \phi_{a+b})$  in $\Sel^2 (E^{\chi(x)})^{\oplus a+b}$, $\Phi$ lies in $\mathcal{S}_{a, b}(E^{\chi(x)})$ if and only if the following holds:
\begin{itemize}
\item Taking $Q = E[\varphi]^{\oplus a} \oplus E[2]^{\oplus b}$, the element $\Phi$ is in the image of $H^1(G_{\QQ}, Q)$;
\item The projection from $Q$ to $E[\varphi]^{\oplus a}$ restricts to a surjection on $T(\phi)$; and
\item Taking $\varphi': E_0 \to E$ to be the dual isogeny to $\varphi$, the projection from $Q$ to $E_0[\varphi']^{\oplus b}$ also restricts to a surjection on $T(\Phi)$.
\end{itemize}

\end{lem}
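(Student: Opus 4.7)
The plan is to translate each of the three listed conditions into an equivalent Selmer-theoretic statement about $(\phi_1,\dots,\phi_{a+b})$, matching the defining requirements of $\mathcal{S}_{a,b}(E^{\chi(x)})$: membership in $(\Sel^\varphi E^{\chi(x)})^a \oplus (\Sel^2 E^{\chi(x)})^b$, and the two linear independence assertions. A simplification throughout is that, since $\varphi$ and $\varphi'$ are rational $2$-isogenies, both $E[\varphi]$ and $E_0[\varphi']$ are isomorphic to $\FFF_2$ with trivial $G_{\QQ}$-action; this trivializes the $\tau$-action on these quotients when projecting $T(\Phi)$.

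For condition 1, the long exact sequence attached to $0 \to E[\varphi] \to E[2] \xrightarrow{\varphi} E_0[\varphi'] \to 0$ shows that a class in $H^1(G_{\QQ}, E^{\chi(x)}[2])$ lifts to $H^1(G_{\QQ}, E^{\chi(x)}[\varphi])$ if and only if its image in $H^1(G_{\QQ}, E_0^{\chi(x)}[\varphi'])$ vanishes. Combined with the definition of $\Sel^\varphi$ as the preimage of $\Sel^{2^\infty}$, this identifies condition 1 with the requirement $(\phi_1,\dots,\phi_a) \in (\Sel^\varphi E^{\chi(x)})^a$.

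For conditions 2 and 3, under $E[\varphi]^a \cong \FFF_2^a$ the projection of $T(\Phi)$ onto $E[\varphi]^a$ is the $\FFF_2$-span of the ramification-difference vectors $(\phi_i(\sigma_s) - \phi_i(\sigma_t))_{i \le a}$ for $s,t \in S$, so surjectivity is equivalent to the statement that, for every nonzero $c \in \FFF_2^a$, the map $s \mapsto \sum_i c_i \phi_i(\sigma_s)$ is nonconstant in $s$. Condition 3 reformulates analogously after applying the coordinatewise pushforward $\varphi \colon E[2]^b \to E_0[\varphi']^b$, which sends $\Sel^2 E^{\chi(x)}$ into $\Sel^{\varphi'} E_0^{\chi(x)}$. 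The lemma therefore reduces to showing that a class $\psi = \sum_i c_i \phi_i \in \Sel^\varphi E^{\chi(x)}$ (respectively its $\varphi$-pushforward in $\Sel^{\varphi'} E_0^{\chi(x)}$) has vanishing image in $\Sel^{2^\infty} E^{\chi(x)}$ (respectively in $\Sel^{2^\infty} E_0^{\chi(x)}$) if and only if $s \mapsto \psi(\sigma_s)$ is constant in $s$.

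The main obstacle is this last equivalence, which is where the grid structure becomes essential. From the long exact sequence for $0 \to E^d[\varphi] \to E^d[2^\infty] \xrightarrow{\varphi} E_0^d[2^\infty] \to 0$, the kernel of $\Sel^\varphi E^d \to \Sel^{2^\infty} E^d$ is the image of the connecting map $\delta \colon E_0^d(\QQ)[2^\infty] \to H^1(G_{\QQ}, E^d[\varphi])$. Any $\varphi$-lift of $P \in E_0^d(\QQ)[2^\infty]$ lives in a field of the form $\QQ(\sqrt{e \cdot \chi(x)})$ for some $e$ depending only on $P$ and $E$; since $x_s \mid \chi(x)$ for every grid index $s$, this extension is ramified at every grid prime, and so $\delta(P)(\sigma_s)$ equals the nontrivial element of $E[\varphi]$ for every $s$, making the ramification map $s \mapsto \delta(P)(\sigma_s)$ constant. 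Conversely, the Chebotarev-type richness of the grid (Notation \ref{notat:grid}(3)) implies that a Selmer class whose inertia values $\psi(\sigma_s)$ are constant across the grid must agree with such a coboundary at every grid prime, and therefore lie in the image of $\delta$. Chaining the equivalences for all three conditions then proves the lemma in both directions.
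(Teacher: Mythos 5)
Your reduction is the right one, and it matches the paper's: the lemma comes down to showing that a class $\psi$ in $\Sel^\varphi E^{\chi(x)}$ (or, for the $b$-part, its $\varphi$-pushforward in $\Sel^{\varphi'} E_0^{\chi(x)}$) has trivial image in $\Sel^{2^\infty}$ if and only if $s \mapsto \psi(\sigma_s)$ is constant. But your treatment of this crux has problems. The forward claim, that any $\varphi$-lift of $P \in E_0^d(\QQ)[2^\infty]$ is defined over a field of the form $\QQ(\sqrt{e\cdot\chi(x)})$, so that $\delta(P)(\sigma_s)$ is the nontrivial element of $E[\varphi]$ for every $s$, is not true in general: if $P$ lies in $E_0[\varphi']$, then a lift $Q$ lies in $E[2]$ and is defined over $\QQ(E[2])$, so $\delta(P)$ is the fixed quadratic character of $\QQ(E[2])/\QQ$, which is unramified at every grid prime and has $\delta(P)(\sigma_s)=0$. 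What is actually needed (and true) is only that $\delta(P)(\sigma_s)$ is constant in $s$; the constant can be zero.

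The converse direction is where the real content lies, and it is not established. You write that Chebotarev-type richness implies a class with constant inertia values ``must agree with such a coboundary at every grid prime, and therefore lie in the image of $\delta$.'' But agreeing locally with a coboundary at each $x_s$ does not by itself yield global membership in $\text{im}(\delta)$; some global argument is required. The paper's proof proceeds as follows: adjust $\psi$ by a kernel element so that it is unramified at every $x_s$ (so the image of $\psi$ in $\Sel^{2^\infty}$ is unchanged); observe $\psi$ is then unramified outside $\Vplac_0$, hence inflates from $\Gal(L/\QQ)$; apply inflation--restriction at the grid primes together with the richness property (3) of good grids of ideals (every Frobenius class of $\Gal(L/\QQ)$ is represented among the $X_s$) to conclude the cocycle vanishes on $\Gal(L/\QQ(E[2]))$; therefore $\psi$ is either zero or the quadratic character of $\QQ(E[2])/\QQ$, and in either case it maps to zero in $\Sel^{2^\infty}$, giving the contradiction. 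These global steps, in particular the use of inflation--restriction along with property (3) of the grid, are precisely the substance of the lemma and are missing from your sketch.
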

\begin{proof}
If these conditions are satisfied, it is clear that $\Phi$ lies in $\mathcal{S}_{a, b}(E^{\chi(x)})$. Conversely, suppose that $\Phi$ lies in $\mathcal{S}_{a, b}(E^{\chi(x)})$. The first condition is satisfied, so we just need to check that $T(\Phi)$ surjects onto $E[\varphi]^{\oplus a}$ and $E_0[\varphi']^{\oplus b}$.

Suppose to start that $T(\Phi)$ does not surject onto $E[\varphi]^{\oplus a}$, so there is a nonzero linear map $\lambda: E[2]^{\oplus a} \to E[2]$ mapping $T(\Phi)$ to $0$. Applying this to $\Phi$ gives an element $\lambda(\Phi)$ in $\Sel^2 E^{\chi(x)}$ lifting to $\psi \in H^1(G_{\QQ}, E[\varphi])$ such that $T(\lambda(\Phi)) =0$. After adjusting $\psi$ by an element of the kernel to the map to $\Sel^{2^{\infty}} E^{\chi(x)}$ if necessary, we may assume that $\psi$ is unramified at $x_s$ for all $s \in S$. By the definition of $\mathcal{S}_{a, b}$, we know that $\psi$ is nonzero. Since $\psi$ is unramified outside $\Vplac_0$, it must be the inflation of a cocycle on $\Gal(L/\QQ)$.

For $s \in S$, we have 
\[\left(E^{\chi(x)}[2^{\infty}]\right)^{I_{x_s}} = E^{\chi(x)}[2]\]
From the inflation-restriction exact sequence, we then know that the image of $\psi$ in
\[H^1(G_{x_s}/I_{x_s}, E[2])\]
is zero for all $s \in S$. From the third property of good grids of ideals enumerated in Notation \ref{notat:grid}, we then find that a cocycle representing $\psi$ is $0$ at all $\sigma$ in $\Gal(L/\QQ(E[2]))$.  So $\psi$ has trivial restriction to $\QQ(E[2])$.

Since $\psi$ is nonzero, $\QQ(E[2])/\QQ$ must be a nontrivial quadratic extension and $\psi$ must be the associated quadratic character. But then $\lambda(\Phi)$ was $0$, and we have a contradiction. The proof that the final condition holds is similar.
\end{proof}

\begin{proof}[Proof of Proposition \ref{prop:moments}]
Take $Q = E[\varphi]^{\oplus a} \oplus E[2]^{\oplus b}$, and take $H^1_{\Vplac}(G_{\QQ}, Q)$ to be the set of elements in $H^1(G_{\QQ}, Q)$ unramified outside the places in $\Vplac$ for any set of places $\Vplac$ of $\QQ$. Taking $\Frob \,X_s$ to be the conjugacy class of $\Gal(L/\QQ)$ represented by elements in $X_s$, we may define a parameterization
\[\Psi_{x}: H^1_{\Vplac_0}(G_{\QQ}, Q) \times \prod_{s \in S} H^0(\langle \Frob\, X_s\rangle,\, Q) \isoarrow H^1_{\Vplac_0 \cup \{x_s\,:\, s\in S\}}(G_{\QQ}, Q)\]
as in \cite[Section 5]{Smi22b}. Take $\mathscr{M}_0$ to be the subgroup of the $(\phi_0, (m_s)_s)$ in the domain mapping to cocycle classes obeying the local conditions for $E^{\chi(x)}$ at all the primes in $\Vplac_0$ for any/every $x \in X$ such that the image of $m_s$ is $0$ in $E[\varphi]^{\oplus a}$ if $\Frob\, X_s$ does not fix $E[2]$. By \cite[(5.3)]{Smi22b}, every element in $\mathcal{S}_{a, b}(E^{\chi(x)})$ is parameterized by an element in $\mathscr{M}_0$.

The kernel of the restriction map
\[H^1_{\Vplac_0}(G_{\QQ}, Q^{\vee}) \to \text{Hom}(G_{\QQ(E[2])}, Q^{\vee})\]
is of order $2^a$ if $G_{\QQ}$ acts nontrivially on $E[2]$ and otherwise equals $1$.  From the third property of the good grid of ideals enumerated in Notation \ref{notat:grid}, given $x = (x_s)_s$ in $X$, the elements in $H^1_{\Vplac_0}(G_{\QQ}, Q^{\vee})$  that are trivial at $x_s$ for all $s \in S$ such that $\Frob\,X_s$ acts trivially on $E[2]$ are precisely the elements in this kernel. From this together with Wiles' formula \cite[Theorem 8.7.9]{Neuk08}, we find that
\[\#\mathscr{M}_0 = 2^{au} \cdot\frac{4^a}{(\# H^0(G_{\QQ}, E[2]))^a} \cdot \prod_{s \in S}2^{-a} \cdot \left(\#H^0(\langle \Frob\, X_s\rangle,\, E[2])\right)^{a+b}.\]

We take $\mathscr{M}_1$ to be the subset of $\mathscr{M}_0$ parameterizing cocycle classes such that the corresponding ramification subspaces surject onto $E[\varphi]^{\oplus a}$ and $E_0[\varphi']^{\oplus b}$, and we take $\mathscr{M}_2$ to be the subset of these elements whose corresponding ramification subspace is $Q$.

From the definition of a good grid \cite[Notation 8.4 (4) and (5)]{Smi22b}, we find there is some $c > 0$ determined from $E$ and $\Vplac_0$ such that the number of $s \in S$ such that $\Frob\, X_s$ acts trivially on $E[2]$ is at least $c \log \log H/ \log \log \log H$. Given $\epsilon > 0$, it follows that
\[\#\mathscr{M}_2/ \# \mathscr{M}_0  = 1 + O_{\epsilon, E}\left(\exp\left(- (\log \log H)^{1 - \epsilon}\right)\right),\]

Take 
\[\mathscr{R} = \prod_{s \in S} H^0(\langle \Frob\, X_s\rangle,\, E[2]^{\oplus a + b}).\]
As in \cite[Section 5.3]{Smi22b}, given $x \in X$, we may define a pairing
\[\langle\,\,,\,\,\rangle_x\colon \mathscr{R} \times \mathscr{M}_0 \to \tfrac{1}{2}\Z/\Z\]
such that $m \in \mathscr{M}_0$ parameterizes a Selmer element for the twist $\chi(x)$ if and only if it lies in the right kernel of this pairing. So, by Lemma \ref{lem:ram_sub}, our goal is to estimate
\begin{equation}
\label{eq:main_moment_char}
\frac{1}{\# X \cdot \# \mathscr{R}} \sum_{x \in X} \sum_{(m, r) \in \mathscr{M}_1 \times \mathscr{R}} \exp\left(2\pi i \langle m, r \rangle_x \right).
\end{equation}
As in the proof of \cite[Proposition 10.3]{Smi22b}, we may restrict this sum to $m$ such that the associated ramification subspace is cofavored with negligible effect on the sum. By Proposition \ref{prop:IV_cof}, these are precisely the $m$ in $\mathscr{M}_2$.

Following \cite[Proposition 10.3 and Proposition 7.7]{Smi22b}, we may restrict the inner sum to non-ignorable, non-borderline pairs $(m, r)$ with negligible effect on the sum; see \cite[Section 6]{Smi22b} for definitions of both of these terms. By \cite[Proposition 6.6]{Smi22b}, for any non-ignorable non-borderline $((\phi_0, (m_s)_s),  (r_s)_s)$, there is an alternating $G_{\QQ}$-equivariant map 
\[\Gamma: Q \to (E[2]/E[\varphi])^{\oplus a} \oplus E[2]^{\oplus b}\]
such that  $r_s - r_t$ projects to $\Gamma(m_s - m_t)$ for all $s, t \in S$. Given $m$ and $\Gamma$, we see that $r$ is determined up to an element of the form $(r_0 + r_{0s})_s$, where $r_{0s}$ lies in $E[\varphi]^{\oplus a}$ for each $s \in S$ and $r_0$ lies in $H^0(G_{\QQ}, E[2])^{\oplus (a+b)}$.

We note that
\[\langle m, (r_0)_s\rangle_x = 0\quad\text{and}\quad \langle m, (r_{0s})_s\rangle_x = 0.\]
for all $x \in X$ and $m \in \mathscr{M}_2$. The first claim here follows from \cite[Proposition 7.5]{Smi22b}. For the second, we note that the connecting map
\[H^0(G_{x_s}, E[\varphi]) \to H^1(G_{x_s}, E[2]/E[\varphi])\]
is trivial for a given $(x_s)_s \in X$ and $s \in S$ unless $\Frob\, X_s$ acts nontrivially on $E[2]$, in which case $n_s$ is trivial by the definition of $\mathscr{M}_0$, so the claim follows from \cite[(5.2)]{Smi22b}. So, for a non-ignorable non-borderline pair $(m, r)$ as above, we may determine the paring $\langle m, r \rangle_x$ from $m$ and $\Gamma$; as such, we may renotate this pairing in the form $\langle m, \Gamma(m)\rangle_x$ and rewrite \eqref{eq:main_moment_char} up to negligible error as
\[\frac{2^{a \cdot (\# S - 1)} \cdot (\#H^0(G_{\QQ}, E[2]))^{a+b}}{\# X \cdot \# \mathscr{R}} \sum_{x \in X} \sum_{m \in \mathscr{M}_2} \sum_{\Gamma} \exp\left(2\pi i \langle m, \Gamma(m) \rangle_x \right).\]
 By \cite[Proposition 7.3 and (7.3)]{Smi22b}, we may restrict the sum over $\Gamma$ to non-cancellable $\Gamma$ with negligible effect on the expression, where cancellable is defined as in \cite[Definition 7.2]{Smi22b}. Among the  $G_{\QQ}$-equivariant alternating maps
\[\Gamma: E[\varphi]^{\oplus a} \oplus E[2]^{\oplus b} \to (E[2]/E[\varphi])^{\oplus a} \oplus E[2]^{\oplus b},\]
the non-cancellable $\Gamma$ are recognizable as the $R_E$-equivariant homomorphisms, as considered in Proposition \ref{prop:non-canc}. We may consider $\Gamma$ as an alternating $(a+b) \times (a+b)$ matrix; for $\Gamma$ to be non-cancellable, the entries of this matrix must take the forms of Proposition \ref{prop:non-canc}. There are $2^{ab + \tfrac{1}{2}b(b+1)}$ such forms the matrix can take. If it takes one of these forms, we find that $\langle m, \Gamma(m)\rangle_x$ is $0$ by the logic of \cite[Proposition 7.4]{Smi22b}. The first part of the result now follows, and the second part is similar.
\end{proof}

\begin{proof}[Proof of Theorem \ref{thm:moments}]
Take $\mathscr{P}$ to be the set of squarefree integers divisible only by primes in $\Vplac_0$. Given $H_1 > 0$ and $d_1$ in $\mathscr{P}$, take $X^*(d_1, H_1)$ to be the set of squarefree numbers of the form $d_1 d$, where $d\le H$ is a positive integer indivisible by any prime in $\Vplac_0$ such that $d_1d$ lies in $X_E(d_0, \infty)$. Then $X_E(d_0, H)$ may be written as the disjoint union of the sets $X^*(d_1, H/|d_1|)$, so it suffices to find the moments over these sets under the condition that they are nonempty.

So suppose $X^*(d_1, H/|d_1|)$ is nonempty. It then may be written as the disjoint union of a collection of good grids of height $H/|d_1|$ together with the set of ``bad twists'' $Z$, where bad is defined as in \cite[Section 8]{Smi22b}. From \cite[Proposition 9.4]{Smi22b}, the bad twists are negligible in proportion to the size  of $X^*(d_1, H/|d_1|)$, and  the bad twists contribute negligibly to the total moment across $X^*(d_1, H/|d_1|)$ as $H$ tends to $\infty$ for a fixed choice of $a, b$ or $a, V$. So the moment may be estimated by taking the sum over the disjoint good grids in $X^*(d_1, H/|d_1|)$, and the result follows from Proposition \ref{prop:moments}.
\end{proof}

\section{From $2$-Selmer moments to $2$-Selmer ranks}
\label{sec:ranks}
Having estimated some refined moments across $X_E(d_0, H)$, our next goal is to reverse engineer the distribution of Selmer groups. To do this, we will prove that the distributions of $2$-Selmer groups claimed in Theorems \ref{thm:balanced}, \ref{thm:2IV}, and \ref{thm:2V_II} are consistent with our calculated moments. We then need to prove that these distributions of groups are the only ones that produce such moments. In \cite{Smi22b}, we proved the analogous uniqueness statement using a complex-analytic argument. Here, where effectivity of our estimates is not a priority, we take a slightly cleaner approach based on an abstract theorem of Sawin and Matchett Wood \cite{Saw22}.

\begin{defn}
Take $\mathscr{C}$ to be the category of tuples $(V_0, V)$, where $V$ is a finite vector space over $\FFF_2$ and $V_0$ is a subspace of $V$. A morphism $(V_0, V) \to (W_0, W)$ in this category will consist of an injection $V  \to W$ carrying $V_0$ into $W_0$ such that the corresponding homomorphism $V/V_0 \to W/W_0$ is also injective. We write the set of such morphisms as $\text{Inj}((V_0, V), (W_0, W))$.

Given $O = (V_0, V)$ in this category, we define $d_1(O) = \dim V_0$ and $d_2(O) = \dim V/V_0$. Given a second object $A$ in the category, we note that
\begin{equation}
\label{eq:InjC}
\#\Inj(O, A) = \#\Inj\left(\FFF_2^{d_1(O)}, \FFF_2^{d_1(A)}\right) \cdot \#\Inj\left(\FFF_2^{d_2(O)},\,\FFF_2^{d_2(A)}\right)\cdot 2^{d_2(O) d_1(A)},
\end{equation}
where, given $\FFF_2$ vector spaces $V$ and $W$, we have taken the notation $\Inj(V, W)$ for the set of linear injections from $V$ to $W$.

We write the set of isomorphism classes of $\mathscr{C}$ as $\mathscr{C}_{\cong}$, and we endow this set with the discrete topology.
\end{defn}

\begin{prop}
\label{prop:IV_consis}
Choose a nonzero integer $u$, and define a measure $\mu$ on $\mathscr{C}_{\cong}$ by
\[\mu((\FFF_2^a, \FFF_2^{a+b})) = P^{\textup{Mat}}(a\, |\, (\infty - u) \times \infty) \cdot P^{\textup{Alt}}(b\,|\, a - u)\]
for nonnegative integers $a, b$. Then
\[\int \#\Inj\left((\FFF_2^a, \FFF_2^{a+b}),  A\right) d\mu(A) = 2^{au + ab + \tfrac{1}{2}b(b+1)}.\]
\end{prop}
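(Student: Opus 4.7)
The plan is to iterate the factorization of both the integrand and the measure, reducing the integral to a mixed kernel/cokernel moment for a random rectangular matrix. Using \eqref{eq:InjC} together with the product structure of $\mu$, the integral becomes
\[
\sum_{a'} P^{\textup{Mat}}(a'\,|\,(\infty-u)\times\infty)\cdot \#\Inj(\FFF_2^a, \FFF_2^{a'})\cdot 2^{ba'}\cdot \sum_{b'} P^{\textup{Alt}}(b'\,|\, a'-u)\cdot \#\Inj(\FFF_2^b, \FFF_2^{b'}).
\]
I would tackle the inner sum first by recognizing it as $\E[\#\Inj(\FFF_2^b, \ker M)]$ for a random alternating $n\times n$ matrix $M$ over $\FFF_2$ with $n = a'-u$. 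Summing over ordered linearly independent $b$-tuples in $\FFF_2^n$, the probability that the corresponding $b$-dimensional subspace $V$ lies in the radical of the random alternating form equals $2^{\binom{n-b}{2}-\binom{n}{2}} = 2^{-b(2n-b-1)/2}$, since the alternating forms on $\FFF_2^n$ vanishing on $V$ are in bijection with the alternating forms on $\FFF_2^n/V$. Thus the inner sum is $\#\Inj(\FFF_2^b,\FFF_2^n)\cdot 2^{-b(2n-b-1)/2}$.

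Substituting back with $n = a'-u$ and tracking exponents, the factor $2^{ba'}$ in the outer sum combines with the factor $2^{b(b+1)/2 - b(a'-u)}$ produced by the inner evaluation to yield a net $a'$-independent constant $2^{bu+b(b+1)/2}$. What remains is
\[
2^{bu + b(b+1)/2} \cdot \E\bigl[\#\Inj(\FFF_2^a, \ker T)\cdot \#\Inj(\FFF_2^b, \coker T)\bigr],
\]
where $T\colon \FFF_2^n\to\FFF_2^{n-u}$ is a uniform random linear map and $n\to\infty$. Here I have identified $A' = \dim \ker T$ (which has the $P^{\textup{Mat}}$ distribution) and used $A'-u = \dim \coker T$ from rank-nullity to rewrite $\#\Inj(\FFF_2^b, \FFF_2^{A'-u}) = \#\Inj(\FFF_2^b, \coker T)$. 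The remaining task is therefore to show that this residual moment equals $2^{au+ab-bu}$.

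For this, I would condition on a fixed injection $\iota\colon \FFF_2^a \hookrightarrow \FFF_2^n$ landing in $\ker T$, an event of probability $2^{-a(n-u)}$. Given this event, $T$ is determined by its restriction $S\colon \FFF_2^{n-a}\to\FFF_2^{n-u}$ to a complement of $\text{im}(\iota)$, and $S$ is itself uniformly distributed, with $\coker S = \coker T$. Summing over $\iota$ contributes a factor of $\#\Inj(\FFF_2^a,\FFF_2^n)\cdot 2^{-a(n-u)}$, which tends to $2^{au}$ as $n\to\infty$, while the conditional expectation $\E_S[\#\Inj(\FFF_2^b, \coker S)]$ tends to $2^{b(a-u)}$ by the standard first $\Inj$-moment for a random cokernel (equivalently, the kernel identity $\E[\#\Inj(\FFF_2^c, \ker T')] = 2^{c u'}$ applied to the transpose $T' = S^T$, which has $u$-parameter $a-u$). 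Their product gives the required $2^{au+ab-bu}$. The main obstacle is this residual identity, whose proof is the conditioning reduction together with the standard kernel and cokernel $\Inj$-moment formulas; once it is in place, the rest of the argument is exponent bookkeeping and checking that the finite-$n$ corrections vanish cleanly in each limit.
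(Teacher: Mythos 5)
Your proposal is correct and follows essentially the same route as the paper's proof: both split $\#\Inj(O,A)$ via \eqref{eq:InjC}, evaluate the alternating-form sum as an injection moment into the radical of a random alternating pairing (your bijection with forms on $\FFF_2^n/V$ is a clean way to see the paper's $2^{-b(c-u)+\tfrac12 b(b+1)}$), and then compute the outer sum as an $n\to\infty$ limit of moments for a uniform $(n-u)\times n$ matrix. The only cosmetic difference is that the paper computes the joint probability over $(\lambda,\psi)$ directly as $2^{-a(n-u)-b(n-a)}$, whereas you condition on $\lambda$ landing in the kernel first and then invoke the standard cokernel moment for the conditioned matrix $S$; these are the same calculation phrased differently.
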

\begin{proof}
Take $O = (\FFF_2^a, \FFF_2^{a+b})$ for some nonnegative integers $a, b$. Given any $c \ge \max(u, 0)$, we then have
\begin{align*}
&\int_{d_1(A) = c} \#\Inj(O, A) d\mu(A) \\
&\qquad =\, P^{\textup{Mat}}(c \,|\, (\infty - u) \times \infty) \cdot \#\Inj\left(\FFF_2^{a}, \FFF_2^{c}\right) 2^{bc}\cdot \sum_{d \le c - u} P^{\textup{Alt}}(d \,|\, c - u)  \cdot \#\Inj\left(\FFF_2^{b},\,\FFF_2^{d}\right)
\end{align*}
by \eqref{eq:InjC}. 

Take $T$ to be a random alternating pairing on $\FFF_2^{c-u}$ uniformly selected from all possibilities.   The sum on the right of this equation may be interpreted as the expected number of injections of $\FFF_2^{b}$ into $\FFF_2^{c-u}$ such that the image of the injection lies in the kernel of $T$. But, given any subspace of dimension $b$ of $\FFF_2^{c-u}$, the probability that it lies in the kernel of $T$ is
\[2^{-b(c-u) + \tfrac{1}{2}b(b+1)}.\]
By linearity of expectation, the above integral is then
\[P^{\textup{Mat}}(c\, |\, (\infty - u) \times \infty) \cdot \#\Inj\left(\FFF_2^{a}, \FFF_2^{c}\right) \cdot \#\Inj\left(\FFF_2^{b}, \, \FFF_2^{c - u}\right) 2^{bu + \tfrac{1}{2}b(b+1)}.\]
From the explicit expression fo $P^{\text{Mat}}$ in \cite[Section 2]{Smi22b}, we thus find that the integral of the proposition equals the limit of
\[ 2^{bu + \tfrac{1}{2}b(b+1)} \cdot \sum_{c \ge 0} P^{\textup{Mat}}(c\, |\, (n - u) \times n) \cdot \#\Inj\left(\FFF_2^{a}, \FFF_2^{c}\right) \cdot \#\Inj\left(\FFF_2^{b}, \, \FFF_2^{c - u}\right)\]
as $n$ tends to infinity.

We now will calculate this sum for a sufficiently large integer $n$. Take $T$ to be a random $(n- u) \times n$ matrix over $\FFF_2$ uniformly selected from all possibilities. Then we may interpret the above sum as the expected number of pairs of injections
\[\lambda: \FFF_2^a  \to \FFF_2^n \quad\text{and}\quad \psi: \FFF_2^b \to \FFF_2^{n-u}\]
such that the image of $\lambda$ is in the right kernel of $T$ and the image of $\psi$ is in the left kernel of $T$. Given $\lambda$ and $\psi$, the probability that this condition holds is
\[2^{-a(n - u) - b(n - a)},\]
so this sum is
\[2^{-an + au - bn + ab} \cdot \# \Inj\left(\FFF_2^{a},\, \FFF_2^n\right) \cdot \#\Inj\left(\FFF_2^b,\, \FFF_2^{n-u}\right).\]
As $n$ tends to infinity, this tends to
\[2^{au + ab - bu},\]
and we can conclude that
\[\int \# \Inj(O, A) d\mu(A) = 2^{au + ab + \tfrac{1}{2}b(b+1)},\]
as claimed.
\end{proof}

\begin{prop}
\label{prop:IV_unique}
Take $\mu_1, \mu_2, \dots$ and $\mu$ to be probability measures on $\mathscr{C}_{\cong}$. Suppose that there is some $C > 0$ such that, given any $(V_0, V)$ in $\mathscr{C}_{\cong}$ with $\dim V > 2\dim V_0 + C$, we have $\mu_i(V_0, V)  = 0$ for all $i$. Suppose further that
\[\lim_{i \to \infty} \int \#\Inj\left(O, A\right) d\mu_i(A) =   \int \#\Inj\left(O, A\right) d\mu(A) \le 2^{(a+b)C + ab + \frac{1}{2}b^2}\]
for all objects $O$ in $\mathscr{C}$, where $a = d_1(O)$ and $b = d_2(O)$. Then the $\mu_i$ have limit $\mu$.
\end{prop}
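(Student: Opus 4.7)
My plan follows the three-step moment-determinacy template of \cite{Saw22}: derive a uniform exponential tail bound on the $\mu_i$ from a single family of test moments; use Möbius inversion on a finite truncation of the upper-triangular ``moment matrix'' to express $\mu_i(\{A_0\})$ as a finite linear combination of moments up to a small tail; and pass $i \to \infty$.

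For the tail bound, for each $k \ge 1$ take the test object $O_k = (\FFF_2^k, \FFF_2^k)$, so $d_1(O_k) = k$ and $d_2(O_k) = 0$. The hypothesis gives $M(O_k) \le 2^{kC}$, and pointwise convergence then gives $M_i(O_k) \le 2^{kC+1}$ for all $i \ge i_0(k)$. By \eqref{eq:InjC} we have $\#\Inj(O_k, B) \ge 2^{k(d_1(B) - k)}$ whenever $d_1(B) \ge k$, so from $M_i(O_k) \ge \mu_i(\{B\}) \cdot \#\Inj(O_k, B)$ I obtain
\[
\mu_i(\{B\}) \;\le\; 2^{kC + k^2 + 1 - k\, d_1(B)}
\]
for all such $B$ and $i \ge i_0(k)$. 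Combined with the support restriction $d_2(B) \le d_1(B) + C$, which limits the number of classes with $d_1(B) = a$ to at most $a + C + 1$, this gives a tail bound $\sum_{d_1(B) > N} \mu_i(\{B\})$ that decays geometrically in $N$, uniformly in $i \ge i_0(k)$. The finitely many $\mu_i$ with $i < i_0(k)$ are themselves probability measures and are controlled individually.

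For the inversion, the structural fact $\#\Inj(O, B) = 0$ unless $d_1(O) \le d_1(B)$ and $d_2(O) \le d_2(B)$ makes the moment matrix upper triangular in the product order on $(d_1, d_2)$, with nonzero diagonal entries $\#\Inj(A, A) > 0$. For fixed $A_0$ and truncation level $N$, the restriction of this matrix to the finite index set $F_N = \{A : d_1(A), d_2(A) \le N\}$ is invertible; solving yields a finite linear combination $\Phi_N = \sum_O c_N(O) \#\Inj(O, -)$ with $\Phi_N(B) = \mathbf{1}_{\{B \cong A_0\}}(B)$ for all $B \in F_N$. Off $F_N$ the values $|\Phi_N(B)|$ grow at most polynomially in $2^{d_1(B)}$ and $2^{d_2(B)}$ with degree depending only on $N$, and so are dominated by the exponential tail bound of the previous step once $k$ is chosen large relative to $N$.

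Combining, the identity
\[
\mu_i(\{A_0\}) = \int \Phi_N\, d\mu_i \;-\; \sum_{B \notin F_N} \mu_i(\{B\}) \bigl(\Phi_N(B) - \mathbf{1}_{\{B \cong A_0\}}(B)\bigr)
\]
writes $\mu_i(\{A_0\})$ as a finite linear combination of moments $M_i(O)$ plus a tail error that tends to $0$ as $N \to \infty$, uniformly in $i$. The same identity holds for $\mu$, by the analogous tail estimate applied in the limit. Letting $i \to \infty$ with $N$ fixed gives convergence of the main term to the corresponding combination of $M(O)$, and then sending $N \to \infty$ yields $\mu_i(\{A_0\}) \to \mu(\{A_0\})$ for every $A_0$, which is the desired weak convergence on the discrete space $\mathscr{C}_{\cong}$. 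The principal obstacle is the uniformity in $i$ of the tail bound: the exponent $(a+b)C + ab + \tfrac{1}{2}b^2$ in the hypothesis is calibrated precisely so that test-moment upper bounds dominate the combinatorial growth of $\Phi_N$, and the clean abstract verification of this balance is essentially the content inherited from \cite{Saw22}.
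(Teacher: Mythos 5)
Your high-level plan -- derive a tail bound, invert a truncated moment matrix, pass to the limit -- is the template underlying \cite{Saw22}, and the paper also relies on \cite{Saw22}. But the route is genuinely different and the direct implementation has a gap at the critical step. The paper does \emph{not} run the moment-inversion argument on $\mathscr{C}$ directly. Instead it performs a two-stage reduction to the one-dimensional diamond category of finite $\FFF_2$-vector spaces: first it defines auxiliary measures $\nu_i, \nu$ on vector spaces by summing $\mu_i(\FFF_2^c, \FFF_2^{c+d})$ over $d$ with weights $\#\Inj(\FFF_2^b, \FFF_2^d)\cdot 2^{bc}$, observes via \eqref{eq:InjC} that the $\mathscr{C}$-moments become moments of $\nu$, and applies \cite[Theorem 1.6]{Saw22} (after explicitly bounding the M\"obius-to-automorphism ratio for vector spaces using \cite[Lemma 3.8]{Saw22}) to get $\nu_i \to \nu$; it then fixes $c$, notes that the support restriction $d_2 \le d_1 + C$ makes the resulting $d$-moments grow only like $2^{b(c+2C)}$, and applies \cite[Theorem 1.6]{Saw22} a second time to pull out convergence of $\mu_i(\FFF_2^c, \FFF_2^{c+d})$.

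The concrete gap in your sketch is the claim that the exponential tail from the test objects $O_k = (\FFF_2^k, \FFF_2^k)$ dominates the growth of $\Phi_N$ off $F_N$. Chasing the exponents: for $B$ in the support of $\mu_i$ with $d_1(B) = a$, your tail bound gives $\mu_i(\{B\}) \le 2^{kC + k^2 + 1 - ka}$, while $|\Phi_N(B)|$ is bounded by $\max_O |c_N(O)| \cdot |F_N| \cdot 2^{O(N a)}$; the inversion coefficients $c_N(O)$ in a moment matrix of this type grow like $2^{\Theta(N^2)}$ (they are essentially M\"obius function values, which have Gaussian-coefficient growth). To dominate the $2^{O(Na)}$ factor you must take $k = \Omega(N)$, but then the $2^{k^2}$ factor already blows up at rate $2^{\Theta(N^2)}$, which cancels nothing against the $2^{\Theta(N^2)}$ from the coefficients -- the resulting bound diverges as $N \to \infty$. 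You acknowledge this by saying the ``clean abstract verification of this balance is essentially the content inherited from \cite{Saw22},'' but that is precisely the step that needs to be done, and it cannot simply be cited because the moment bound $2^{(a+b)C + ab + \frac{1}{2}b^2}$ grows too fast in $b$ to satisfy the \cite{Saw22} hypothesis for $\mathscr{C}$ in one shot; it is only after the first reduction uses that bound, and the second uses the support restriction on the $\mu_i$, that the 1D hypothesis is met in each stage. A secondary issue: $\mu$ itself is not assumed to satisfy the support restriction $d_2 \le d_1 + C$, so the tail estimate you invoke for $\mu$ ``in the limit'' does not follow from your Step 1 and requires a separate argument. The two-stage reduction in the paper handles this automatically because the second application of \cite[Theorem 1.6]{Saw22} only needs the support restriction on the $\mu_i$, not on the limit.
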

\begin{proof}
Choose a nonnegative integer $b$, and define a measure $\nu$ on the set of isomorphism classes of finite vector spaces over $\FFF_2$ by
\[\nu(\FFF_2^c) = \sum_{d \ge 0} \#\Inj\left(\FFF_2^b, \FFF_2^d\right) 2^{bc} \mu\big(\FFF_2^c, \FFF_2^{c+d}\big).\]
Define $\nu_i$ from $\mu_i$ analogously for each nonnegative integer $i$. Applying \eqref{eq:InjC} gives
\[  \int \#\Inj\left((\FFF_2^a, \FFF_2^{a+b}), A\right) d\mu(A) = \sum_{c \ge 0} \#\Inj\big(\FFF_2^a, \,\FFF_2^c\big) \nu(\FFF_2^c)\] 
for any nonnegative integer $a$. So
\[\lim_{i \to \infty} \sum_{c \ge 0} \#\Inj\big(\FFF_2^a, \,\FFF_2^c\big) \nu_i(\FFF_2^c) \,=\, \sum_{c \ge 0} \#\Inj\big(\FFF_2^a, \,\FFF_2^c\big) \nu(\FFF_2^c).\] 
We claim that applying \cite[Theorem 1.6]{Saw22} to the diamond category of finite $\FFF_2$-vector spaces gives that the $\nu_i(\FFF_2^c)$ limit to $\nu(\FFF_2^c)$. To prove this, we note by \cite[Lemma 3.8]{Saw22} that the associated M\"{o}bius function evaluated at $\FFF_2^a$ and $\FFF_2^c$ is bounded by $2^{\frac{1}{2}(a^2+a)}$ for $a \ge c$, while the number of automorphisms of $\FFF_2^a$ has growth rate on the order of $2^{a^2}$, so the expression before \cite[(1.4)]{Saw22} is bounded for this example.

So, for $b,c \ge 0$,
\[\sum_{d \ge 0} \#\Inj\left(\FFF_2^b, \FFF_2^d\right)  \mu\big(\FFF_2^c, \FFF_2^{c+d}\big) = \lim_{i \to \infty} \sum_{d \ge 0} \#\Inj\left(\FFF_2^b, \FFF_2^d\right)  \mu_i\big(\FFF_2^c, \FFF_2^{c+d}\big).\]
By the proposition's assumptions on the $\mu_i$, these moments are bounded by $2^{b(c+2C)}$. Again applying \cite[Theorem 1.6]{Saw22} to the diamond category of finite $\FFF_2$-vector spaces gives the result.
\end{proof}

\begin{proof}[Proof of Theorems \ref{thm:balanced} and \ref{thm:2IV}]
Take $E$ to be a curve in Case IV,  take $\varphi: E \to E_0$ to be its balanced isogeny, and choose a nonzero integer $d_0$. Given $d \in X_E(d_0, \infty)$, we define an object $A(d)$ in $\mathscr{C}$ by
\[\left(\text{im} (H^1(G_{\QQ}, E[\varphi])) \cap  \Sel^{2^{\infty}} E^d, \,\,\text{im} (H^1(G_{\QQ}, E[2])) \cap  \Sel^{2^{\infty}} E^d\right).\]
Given an object $O = (\FFF_2^a, \FFF_2^{a+b})$ in $\mathscr{C}$, we have
\[\# \mathcal{S}_{a, b}(E^d) = 2^{a} \cdot  \left(\#H^0(G_{\QQ}, E[2])\right)^{b} \cdot \# \Inj(O, A(d))\]
for all but finitely many $d$ in $X_E(d_0, \infty)$. So Theorem \ref{thm:moments} gives
\[\lim_{H \to \infty} \frac{1}{\#X_E(d_0, H)}  \sum_{d \in X_E(d_0, H)} \# \Inj(O, A(d)) = 2^{au + ab + \tfrac{1}{2}b(b+1)},\]
and the results follow for $E$ in Case IV by Propositions \ref{prop:IV_consis} and \ref{prop:IV_unique}.

It remains to prove Theorem \ref{thm:balanced} for $E$ in Case V with a balanced isogeny $\varphi: E \to E_0$. Take $\varphi': E_0 \to E$ to be the dual isogeny. $E_0$ is in Case IV, so the theorem holds for the isogeny $\varphi'$. But the result now follows for $\varphi$ since
\[P^{\text{Mat}}(a\,|\, (\infty - u) \times \infty) = P^{\text{Mat}}(a - u\,|\, (\infty + u) \times \infty)\]
for any integer $u$ and any $a \ge \max(u, 0)$.
\end{proof}

We now define the necessary category to prove Theorem \ref{thm:2V_II}.
\begin{defn}
Fix a finite $\FFF_2$-vector space $\mathcal{L}$, and take $\mathscr{D}$ to be the category of homomorphisms $V \to \mathcal{L}$, where $V$ is a finite $\FFF_2$-vector space. The morphisms from this object to one of the form $W \to \mathcal{L}$ will be the homomorphisms $V \to W$ such that the corresponding triangle is commutative. We take $\mathscr{D}_{\cong}$ to be the set of isomorphism classes of objects in this category, and we endow this set with the discrete topology. Given objects $O, A$ in this category, we write $\text{Inj}(O, A)$ for the set of monomorphisms from $O$ to $A$.

Given an object $O = (V \to \mathcal{L}) $ in this category, we define $d(O) = \dim V$ and $\lambda(O)$ to be the image of $V$ in $\mathcal{L}$.
\end{defn}
\begin{prop}
\label{prop:V_consis}
Fix a nonzero integer $u$, and define a measure $\mu$ on $\mathscr{D}_{\cong}$ by
\[\mu(A) = P^{\textup{Mat}}(d(A)\, |\, (\infty - u) \times \infty) \cdot P_{\textup{loc}}\big(\FFF_2^{d(A)}, \lambda(A)\big),\]
where $P_{\textup{loc}}\big(\FFF_2^{d(A)}, V\big)$ denotes the probability that a homomorphism $\FFF_2^{d(A)} \to \mathcal{L}$ selected uniformly at random has image $V$.

Then, given an object $O$ in $\mathscr{D}$,
\[\int \# \Inj(O, A) d\mu(A) = (\# \mathcal{L})^{-d(O)}  2^{u\cdot d(O)}.\]
\end{prop}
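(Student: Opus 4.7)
The plan is to rewrite the integral as an expectation over a two-stage sampling procedure for $A$, then apply linearity of expectation and a short limiting argument. Throughout, set $a = d(O)$ and write $V_O$ for the source of $f_O \colon V_O \to \mathcal{L}$, so $V_O \cong \FFF_2^{a}$.

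First, I would pin down the isomorphism classes. Two objects $(f \colon \FFF_2^c \to \mathcal{L})$ and $(f' \colon \FFF_2^c \to \mathcal{L})$ are isomorphic iff they differ by precomposition with an element of $\mathrm{GL}_c(\FFF_2)$, which happens iff they have the same image. So, for each $c \ge 0$, iso classes $A$ with $d(A) = c$ are parametrized by subspaces $V \subseteq \mathcal{L}$, with $\lambda(A) = V$. Since $\# \Inj(O, A)$ depends only on the iso class of $A$, sampling $V$ with weight $P_{\textup{loc}}(\FFF_2^{c}, V)$ is the same as sampling a uniformly random $f \colon \FFF_2^c \to \mathcal{L}$, taking $A_f = (f \colon \FFF_2^c \to \mathcal{L})$, and reading off its iso class. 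Hence
\[
\int \#\Inj(O, A)\, d\mu(A) \;=\; \sum_{c \ge 0} P^{\textup{Mat}}(c \mid (\infty - u) \times \infty)\cdot \E_{f \text{ unif}}\bigl[\#\Inj(O, A_f)\bigr].
\]

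Next, I would evaluate the inner expectation by linearity. Unpacking the definition of a morphism in $\mathscr{D}$,
\[
\#\Inj(O, A_f) \;=\; \#\bigl\{\iota \colon V_O \hookrightarrow \FFF_2^c \,:\, f \circ \iota = f_O\bigr\} \;=\; \sum_{\iota \in \Inj(V_O,\, \FFF_2^c)} \mathbf{1}\bigl[f \circ \iota = f_O\bigr].
\]
For a fixed injection $\iota$, the condition $f \circ \iota = f_O$ pins down $f$ on the $a$-dimensional image of $\iota$ and leaves it arbitrary on a complement, so $\Pr_f[f \circ \iota = f_O] = (\#\mathcal{L})^{-a}$. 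Therefore
\[
\E_{f}\bigl[\#\Inj(O, A_f)\bigr] \;=\; (\#\mathcal{L})^{-a} \cdot \#\Inj(\FFF_2^{a},\, \FFF_2^{c}).
\]

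The final step is a short limiting argument to evaluate the sum over $c$. Recall $P^{\textup{Mat}}(c \mid (\infty - u)\times\infty) = \lim_{n \to \infty} P^{\textup{Mat}}(c \mid (n - u)\times n)$. For fixed $n$, let $T \colon \FFF_2^n \to \FFF_2^{n-u}$ be a uniformly random linear map. Running the same expectation computation inside $\FFF_2^n$ gives
\[
\sum_{c \ge 0} P^{\textup{Mat}}(c \mid (n-u)\times n) \cdot \#\Inj(\FFF_2^{a},\, \FFF_2^{c}) \;=\; \E_T\bigl[\#\Inj(\FFF_2^{a},\, \ker T)\bigr] \;=\; \#\Inj(\FFF_2^{a},\, \FFF_2^{n})\cdot 2^{-a(n-u)},
\]
since each injection $V_O \hookrightarrow \FFF_2^n$ lands in $\ker T$ with probability $2^{-a(n-u)}$. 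As $n \to \infty$, $\#\Inj(\FFF_2^{a}, \FFF_2^{n}) = \prod_{i=0}^{a-1}(2^n - 2^i) \sim 2^{an}$, so the right-hand side tends to $2^{au}$. A routine check (using that $\#\Inj(\FFF_2^a, \FFF_2^c) \le 2^{ac}$ and the tail decay of $P^{\textup{Mat}}$) justifies exchanging the limit with the sum over $c$. Combining the three steps,
\[
\int \#\Inj(O, A)\, d\mu(A) \;=\; (\#\mathcal{L})^{-a}\cdot 2^{au} \;=\; (\#\mathcal{L})^{-d(O)}\cdot 2^{u\cdot d(O)},
\]
as claimed. The only mildly delicate point is the exchange of limit and sum in the final step, and it is handled by the uniform tail decay of the matrix kernel distribution.
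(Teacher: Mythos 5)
Your proof is correct and follows essentially the same route as the paper's: decompose by $d(A)$, resample the second factor of $\mu$ as a uniformly random homomorphism $f\colon \FFF_2^c \to \mathcal{L}$, compute $\E_f[\#\Inj(O, A_f)] = (\#\mathcal{L})^{-d(O)}\#\Inj(\FFF_2^{d(O)},\FFF_2^c)$ by linearity of expectation, and then evaluate $\sum_c P^{\textup{Mat}}(c\mid(\infty-u)\times\infty)\#\Inj(\FFF_2^{d(O)},\FFF_2^c) = 2^{u\,d(O)}$ via the finite-$n$ random-matrix model. The paper handles that last step by deferring to the end of the proof of Proposition~\ref{prop:IV_consis}, which uses exactly your random $(n-u)\times n$ matrix argument; the only cosmetic difference is that the paper writes the probability as $2^{-d(O)\dim\mathcal{L}}$ rather than $(\#\mathcal{L})^{-d(O)}$.
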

\begin{proof}
The integral of the proposition equals
\[\sum_{c \ge 0} P^{\textup{Mat}}(c\, |\, (\infty - u) \times \infty) \cdot \sum_{\substack{V \subseteq \mathcal{L} \\  \dim V \le c}} P_{\textup{loc}}(\FFF_2^c, V) \cdot \#\Inj\left(O,\, A(c, V)\right),\]
where $A(c, V)$ is any object $A$ with $d(A) = c$ and $\lambda(A) = V$.

Take $L: \FFF_2^c \to \mathcal{L}$  to be a random homomorphism selected uniformly at random from all possibilities. The inner sum above may be interpreted as the expected number of injections from $\FFF_2^{d(O)}$ to $\FFF_2^c$ such that the homomorphism of $O$ equals the composition 
\[\FFF_2^{d(O)} \to \FFF_2^c  \xrightarrow{\,\,L\,\,} \mathcal{L}.\]
Given an injection from $\FFF_2^{d(O)}$ to $\FFF_2^c$, the probability this composition agrees with the homomorphism of $O$ is $2^{-d(O) \cdot \dim \mathcal{L}}$. So the integral of the proposition is
\[2^{-d(O) \cdot \dim \mathcal{L}} \cdot \sum_{c \ge 0} P^{\textup{Mat}}(c\, |\, (\infty - u) \times \infty) \cdot \# \Inj\big(\FFF_2^{d(O)}, \,\FFF_2^c\big),\]
and the end of the proof follows that of Proposition \ref{prop:IV_consis}.
\end{proof}

\begin{prop}
\label{prop:V_uniqueness}
Take $\mu_1, \mu_2, \dots$ and $\mu$ to be positive measures on $\mathscr{D}_{\cong}$. Suppose there is $C > 0$ such that
\[\lim_{i \to \infty} \int \#\Inj(O, A) d\mu_i(A) =  \int \# \Inj(O, A) d\mu < 2^{Cd(O)}\]
for all objects $O$ in $\mathscr{D}$. Then the $\mu_i$ have limit $\mu$.
\end{prop}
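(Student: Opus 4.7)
The plan is to mirror the proof of Proposition \ref{prop:IV_unique}: reduce the given moments to Sawin--Matchett Wood moments on the diamond category of finite $\FFF_2$-vector spaces, apply \cite[Theorem 1.6]{Saw22}, and then invert a finite linear system coming from the subspace lattice of $\mathcal{L}$. To start, every isomorphism class in $\mathscr{D}_\cong$ is determined by the pair $(d(A), \lambda(A))$; write $A_{c, W}$ for the unique class with $d(A_{c,W}) = c$ and $\lambda(A_{c,W}) = W$, defined whenever $W \subseteq \mathcal{L}$ and $c \geq \dim W$. The key combinatorial input is an explicit count of morphisms in $\mathscr{D}$: a short argument---pick a section of $f \colon \FFF_2^a \twoheadrightarrow V_0^*$, observe that $\ker f$ must embed into $\ker g$, and note that any resulting lift is automatically injective---gives, for $V_0^* \subseteq W_0^*$ of respective dimensions $v^*, w^*$,
\[\#\Inj(O, A_{c, W_0^*}) \,=\, \#\Inj\big(\FFF_2^{a - v^*},\, \FFF_2^{c - w^*}\big) \cdot 2^{(c - w^*) v^*}\]
for any $O$ with $d(O) = a$ and $\lambda(O) = V_0^*$, and $\#\Inj(O, A_{c, W_0^*}) = 0$ when $V_0^* \not\subseteq W_0^*$.

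For each subspace $V_0^* \subseteq \mathcal{L}$, set $v^* = \dim V_0^*$ and define an auxiliary positive measure $\tilde\xi_i^{V_0^*}$ on isomorphism classes of finite $\FFF_2$-vector spaces by
\[\tilde\xi_i^{V_0^*}(\FFF_2^k) \,=\, 2^{k v^*} \sum_{W \supseteq V_0^*} \mu_i\big(A_{k + \dim W,\, W}\big).\]
Summing the injection-count formula over $A \in \mathscr{D}_\cong$ shows that for any $O$ with $d(O) = b + v^*$ and $\lambda(O) = V_0^*$,
\[\int \#\Inj(O, A)\, d\mu_i(A) \,=\, \sum_{k \geq 0} \tilde\xi_i^{V_0^*}(\FFF_2^k) \cdot \#\Inj(\FFF_2^b, \FFF_2^k),\]
which is precisely the Sawin--Matchett Wood moment of $\tilde\xi_i^{V_0^*}$ at $\FFF_2^b$. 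The hypothesis supplies both the pointwise convergence of these moments in $i$ and an upper bound $2^{C(b + v^*)}$; the factor $2^{Cv^*}$ is absorbable into a constant depending only on the fixed $V_0^*$.

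Applying \cite[Theorem 1.6]{Saw22} to the diamond category of finite $\FFF_2$-vector spaces---with the Möbius and automorphism bounds verified as in the proof of Proposition \ref{prop:IV_unique}---yields $\tilde\xi_i^{V_0^*}(\FFF_2^k) \to \tilde\xi^{V_0^*}(\FFF_2^k)$ for every $k$ and every $V_0^* \subseteq \mathcal{L}$. Stripping the $2^{kv^*}$ weight gives pointwise convergence of the subspace sums
\[\xi_i^{V_0^*}(k) \,:=\, \sum_{W \supseteq V_0^*} \mu_i\big(A_{k + \dim W,\, W}\big) \,\longrightarrow\, \xi^{V_0^*}(k)\]
for every $k \geq 0$ and every $V_0^* \subseteq \mathcal{L}$.

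Because $\mathcal{L}$ is a finite $\FFF_2$-vector space, its subspaces form a finite poset under inclusion, so for each fixed $k$ the system defining $\xi_i^{V_0^*}(k)$ inverts by ordinary Möbius inversion: for every $W \subseteq \mathcal{L}$,
\[\mu_i\big(A_{k + \dim W,\, W}\big) \,=\, \sum_{V_0^* \supseteq W} \mu_{\textup{sub}}(W, V_0^*) \cdot \xi_i^{V_0^*}(k),\]
where $\mu_{\textup{sub}}$ denotes the Möbius function of the subspace lattice of $\mathcal{L}$. Since the right-hand side is a finite $\mathbb{Z}$-linear combination of convergent sequences, each $\mu_i(A_{c, W})$ converges to $\mu(A_{c, W})$, so $\mu_i \to \mu$ pointwise on $\mathscr{D}_\cong$ (equipped with the discrete topology). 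The main obstacle in the plan is the combinatorial calculation of $\#\Inj(O, A)$ in $\mathscr{D}$ and verifying that its form admits the clean moment decomposition above; the Sawin--Matchett Wood step is a direct adaptation of the argument used for Proposition \ref{prop:IV_unique}, and the final Möbius inversion is purely formal.
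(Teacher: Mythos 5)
Your proof is correct, but it takes a genuinely different route from the paper's. The paper's proof of Proposition \ref{prop:V_uniqueness} is a one-liner: it simply observes that $\mathscr{D}$ is the dual of a diamond category in the sense of \cite[Definition 1.3]{Saw22} and invokes \cite[Theorem 1.6]{Saw22} directly, with no explicit calculation. You instead reduce to the diamond category of finite $\FFF_2$-vector spaces by first proving the explicit count
\[\#\Inj(O, A_{c, W_0^*}) = \#\Inj\big(\FFF_2^{a - v^*}, \FFF_2^{c - w^*}\big) \cdot 2^{(c - w^*) v^*},\]
then packaging the $\mu_i$-data into the subspace-indexed family of vector-space measures $\tilde\xi_i^{V_0^*}$ with twisted weight $2^{kv^*}$, applying Sawin--Matchett Wood to each of these, and finally recovering the $\mu_i(A_{c,W})$ by M\"obius inversion over the finite subspace lattice of $\mathcal{L}$. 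I checked the injection-count formula and the moment identity
\[\int \#\Inj(O, A)\, d\mu_i(A) = \sum_{k \ge 0} \tilde\xi_i^{V_0^*}(\FFF_2^k)\cdot\#\Inj(\FFF_2^b, \FFF_2^k),\]
and both are right; the bound $2^{C(b+v^*)}$ indeed becomes a $V_0^*$-dependent constant times $2^{Cb}$, which is plenty for the well-behavedness criterion cited in the proof of Proposition \ref{prop:IV_unique}. Your approach is more verbose but self-contained at the level of the vector-space case: it avoids having to verify separately that $\mathscr{D}$ is (dual to) a diamond category, at the cost of the explicit combinatorics and the inversion step. The paper's argument is slicker once one accepts the diamond-category machinery; yours is arguably more illuminating about where the finiteness of $\mathcal{L}$ enters.
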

\begin{proof}
The category $\mathscr{D}$ is the dual of a diamond category as defined in \cite[Definition 1.3]{Saw22}, and the result follows immediately from \cite[Theorem 1.6]{Saw22}.
\end{proof}

\begin{proof}[Proof of Theorem \ref{thm:2V_II}]
Take all notation as in the theorem statement. Define the category $\mathscr{D}$ with respect to $\mathcal{L}$. Given $d \in X_E(d_0, H)$, we take $A(d)$ to be the object in $\mathscr{D}$ given by
\[\text{loc}: \left(\text{im}\left(H^1(G_{\QQ}, E^d[2])\right) \cap \Sel^{2^{\infty}} E^d \right) \to \mathcal{L}.\]
Given $O = \lambda: \FFF_2^a \to \mathcal{L}$, define $v(O)$ to be the element in $\mathcal{L}^{\oplus a}$ whose $i^{th}$ comoponent equals the image of $\lambda$ on the basis vector $e_i$ for $i \le a$. Then
\[\# \mathcal{S}_{a, \langle v(O)\rangle}(E^d) \,-\, \big( |\langle v(O)\rangle | - 1\big) \cdot \# \mathcal{S}_{a, 0}(E^d)   \,=\, 2^a \cdot \#\Inj(O, A(d)) \]
for all but finitely many $d$ in $X_E(d_0, \infty)$. So Theorem \ref{thm:moments} gives
\[\lim_{H \to \infty} \frac{1}{\#X_E(d_0, H)}  \sum_{d \in X_E(d_0, H)} \# \Inj(O, A(d)) = (\# \mathcal{L})^{-a} \cdot 2^{au},\]
and the result follows from Propositions \ref{prop:V_consis} and \ref{prop:V_uniqueness}.
\end{proof}

\section{The distribution of $2^{\infty}$-Selmer coranks}
\label{sec:2infty}

The goal of this section is to prove Theorem \ref{thm:IVV_pretrick}, which together with the work in Section \ref{ssec:isogeny_tricks} will show that the twist families of curves in Cases IV and V have the expected distribution of $2^{\infty}$-Selmer coranks. So suppose this theorem did not hold for a given curve $E$ in Case IV or V.

If $E$ is in Case IV, take $\varphi$ to be its balanced isogeny. If $E$ is in Case V, take $\varphi_1, \varphi_2$ to be its pair of balanced isogenies. We have then assumed that
\begin{equation}
\label{eq:central_absurd}
r_{2^{\infty}}(E^d) >\begin{cases} \max\big(1, r_{\varphi}(E^d) \big) &\text{in Case IV }\\ \max\big(1, r_{\varphi_1}(E^d), r_{\varphi_2}(E^d) \big) &\text{in Case V}\end{cases}
\end{equation}
holds for a positive proportion of squarefree integers $d$.

Recall that we defined good grids of ideals associated to $E$ and their associated twists in Notation \ref{notat:grid}.  From \cite[Proposition 8.5]{Smi22b}, we conclude that there is $\delta > 0$ so that, for any $H_0 > 0$, there is a good grid of ideals $X$ of height $H > H_0$ indexed by a set $S$ and an associated choice of function $\chi: X \to \Z^{\ne 0}$ such that the proportion of $x$ in $X$ such that $d = \chi(x)$ satisfies \eqref{eq:central_absurd} is at least $\delta$.

So long as $H_0$ is sufficiently large, the grid $X$ satisfies conditions (1) and (2) of \cite[Definition 4.19]{Smi22a} by the definition of a good grid of ideals and the Chebotarev density theorem. 

If we choose $x_0$ uniformly at random from $X$, the probability that the grid class $\class{x_0}$ does not satisfy condition (3) of this definition tends to $0$ as $H$ grows. For the first part of the condition, this follows since the number of grid classes can be bounded by an expression of the form $e^{C|S|^2}$, where $C > 0$ does not depend on $H_0$. For the second part, it follows from Proposition \ref{prop:moments}.

Again applying Proposition \ref{prop:moments}, we find there is some $R > 0$ such that the probability that 
\begin{equation}
\label{eq:abs_2_bnd}
r_2(E^{\chi(x_0)}) \le R
\end{equation}
is at least $1 - \tfrac{1}{2} \delta$ so long as $H$ is sufficiently large. Fix such an $R$.

Take $E_0 = E^{\chi(x_0)}$, take $r_2 = r_2(E_0)$, and define $r_{\varphi}, r_{\varphi_1}, r_{\varphi_2}$ analogously. Choose a tuple
\[\Phi = (\phi_1, \dots, \phi_{r_2}) \in \begin{cases} \left(\Sel^{\varphi} E_0\right)^{r_\varphi} \oplus  \left(\Sel^{2} E_0\right)^{r_2 - r_{\varphi}} &\text{in Case IV}\\  \left(\Sel^{\varphi_1} E_0\right)^{r_{\varphi_1}} \oplus  \left(\Sel^{\varphi_2} E_0\right)^{r_{\varphi_2}} \oplus \left(\Sel^2 E_0\right)^{r_{2} - r_{\varphi_1} - r_{\varphi_2}}& \text{in Case V}\end{cases}\]
giving a basis for $\Sel^2 E_0/ \text{im}\left(H^0(G_{\QQ}, E[2])\right)$.  The corresponding ramification subspace in $E[2]^{\oplus r_2}$ is cofavored outside of a negligible set of $x_0$ by \cite[Section 10.4]{Smi22b}. By Propositions \ref{prop:IV_cof} and \ref{prop:V_cof}, it follows that it equals 
\begin{alignat*}{2}
&E[\varphi]^{r_{\varphi}} \oplus E[2]^{r_2 - r_{\varphi}}&&\quad\text{in Case IV and}\\
&E[\varphi_1]^{r_{\varphi_1}} \oplus E[\varphi_2]^{r_{\varphi_2}} \oplus E[2]^{r_{2} - r_{\varphi_1} - r_{\varphi_2}}&&\quad\text{in Case V}.
\end{alignat*}
Call this space $Q$.

Define the subsets $S_{\text{pot-pre}}(x_0), S_{\text{pot-a/b}}(x_0) \subseteq S$ as in \cite[Definition 4.19]{Smi22a}. Consider the condition
\begin{align}
\label{eq:defici}
&\text{For } S_0 \text{ equal to either } S_{\text{pot-pre}}(x_0) \text{ or } S_{\text{pot-a/b}}(x_0) \text{ and every }v \in Q,\\
&\nonumber \text{there are at least }  \log^{(3)} H \text{ choices of } s \in S_0 \text{ satisfying } \pi_s(\Phi) = v.
\end{align}
By applying Hoeffding's inequality as in \cite[Section 10.4]{Smi22b}, we find that this condition holds for all but a negligible set of $x_0$. So, if $H$ is sufficiently large, we find that there is a choice of $x_0$ in $X$ such that $\class{x_0}$ satisfies conditions (1), (2), and (3) of \cite[Definition 4.19]{Smi22a} and \eqref{eq:abs_2_bnd}, such that \eqref{eq:defici} holds, and such that \eqref{eq:central_absurd} is satisfied on a subset of $\class{x_0}$ of density at least $\tfrac{1}{4}\delta$.

Define $V_2(x_0)$ and the parameterizing maps $\Psi_x: V_2(x_0) \to \Sel^{2} E^{\chi(x)}$ for $x \in X$ as in Section \ref{ssec:general_Smi1}. By our assumptions, we may choose $v_1, v_2 \in V_2(x_0)$ parameterizing linearly independent elements  $\phi_1, \phi_2$ of $\Sel^2 E_0/ \text{im}\left(H^0(G_{\QQ}, E[2])\right)$ such that $\phi_1$ and $\phi_2$ are  divisible elements in  the $2^{\infty}$-Selmer group of $E^{\chi(x)}$ for at least $4^{-R - 1} \delta \cdot \# \class{x_0}$ choices of $x$ in $\class{x_0}$ and such that $\langle \phi_1, \phi_2 \rangle$ is not contained in the image of the Selmer group associated to any balanced isogeny from $E_0$.

In Case IV, after rechoosing our basis if necessary, we may assume that $v_1$ parameterizes $\phi_{r_{\varphi} + 1}$ and $v_2$ parameterizes either $\phi_1$ or $\phi_{r_{\varphi} + 2}$. There is then $v \in Q$ that is a generator for $E[2]/E[\varphi]$ at coordinate $r_{\varphi} + 1$, equal to the generator for $E[\varphi]$ at the coordinate corresponding to $v_2$, and equal to $0$ at all other coordinates. Since we have assumed the condition \eqref{eq:defici}, $v_1 \otimes v_2$ is a controllable tensor, in the sense of Definition \ref{defn:half_higher}.

In Case V, after rechoosing our basis and swapping the balanced isogenies if necessary, we may assume that $v_1, v_2$ parameterizes one of
\[\left(\phi_{1}, \,\phi_{r_{\varphi_1} +1 }\right)\quad\text{or}\quad \left(\phi_{1}, \,\phi_{r_{\varphi_1} + r_{\varphi_2} +1 }\right)\quad\text{or}\quad \left(\phi_{r_{\varphi_1} + r_{\varphi_2} + 1}, \,\phi_{r_{\varphi_1} + r_{\varphi_2} +2 }\right).\] 
In each of these three situations, we may apply \eqref{eq:defici} as before to  show that $v_1 \otimes v_2$ is a controllable tensor.

In all cases, Theorem \ref{thm:controllable} applies to this tensor. For $k\ge 1$, take $Y_k$ to be the union of higher grid classes $\class{x_1}_k$ contained in $\class{x_0}$ such that $V_{2^k}(x_1)$ contains $v_1$ and $v_2$. Then
\[\sum_{x \in Y_k} \exp\left(2\pi i \cdot \text{ct}_{x, k}(v_1 \otimes v_2) \right) \ge 2|Y_{k+1}| - |Y_k|.\]
So, choosing $M$ to be the least integer such that $2^M \cdot 4^{-R - 1} \delta > 2$, we find that 
\begin{equation}
\label{eq:oops}
\sum_{k = 1}^M \sum_{x \in Y_k} 2^{k-1}\exp\left(2\pi i \cdot \text{ct}_{x, k}(v_1 \otimes v_2)\right) \ge \# \class{x_0}.
\end{equation}
But each $Y_k$ is the disjoint union of a collection of higher grid classes whose number may be bounded only in terms of $R$ and $k$, so the left-hand side of this inequality is bounded by an expression of the form $\mathcal{O}\left( (\log \log H)^{-1/4} \cdot \# \class{x_0}\right)$ by Theorem \ref{thm:controllable}, where the implicit constant does not depend on $H$. This contradicts \eqref{eq:oops} for sufficiently large $H$. \qed


\bibliography{references}{}
\bibliographystyle{amsplain}

\end{document}